\newtheorem{theorem}{Theorem}[section]
\newtheorem{lemma}[theorem]{Lemma}
\newtheorem{prop}[theorem]{Proposition}
\newtheorem{corollary}[theorem]{Corollary}
\newtheorem{remark}{Remark}
\newcommand{\rank}{\operatorname{rank}}
\newcommand{\s}{\sigma}
\newcommand{\diag}{\operatorname{diag}}
\newcommand{\diam}{\operatorname{diam}}
\newcommand{\supp}{\operatorname{supp}}
\newcommand{\argmin}{\arg \min}
\newcommand{\R}{\mathbb{R}}
\begin{document}

\title{Learning Functions of Few Arbitrary Linear Parameters in High Dimensions}
\author{
Massimo Fornasier\footnote{Technical University of Munich, Faculty of Mathematics, Boltzmannstra\ss e 3, D-85748 Garching, Germany,
email: {\tt 	massimo.fornasier@ma.tum.de}.}, \  
Karin Schnass\footnote{Johann Radon Institute for Computational and 
Applied Mathematics, Austrian Academy of Sciences, Altenbergerstra\ss e 69, A-4040 Linz, Austria,
email: {\tt  karin.schnass@oeaw.ac.at}.} \ and
Jan Vybiral\footnote{Johann Radon Institute for Computational and 
Applied Mathematics, Austrian Academy of Sciences, Altenbergerstra\ss e 69, A-4040 Linz, Austria,
email: {\tt  jan.vybiral@oeaw.ac.at}.}
}
\date{\it Dedicated to Ronald A. DeVore for his $70^{th}$ birthday}

\maketitle

\begin{abstract}
Let us assume that $f$ is a continuous function defined on the unit ball of $\mathbb R^d$, 
of the form $f(x) = g (A x)$, where $A$ is a $k \times d$ matrix and $g$ is a function of $k$ 
variables for $k \ll d$. We are given a budget $m \in \mathbb N$ of possible point evaluations 
$f(x_i)$, $i=1,\dots,m$, of $f$, which we are allowed to query in order to construct a uniform 
approximating function. Under certain smoothness and variation assumptions on the function $g$, 
and an {\it arbitrary} choice of the matrix $A$, we present in this paper 

1. a sampling choice of the points $\{x_i\}$ drawn at random for each function approximation; 

2. algorithms (Algorithm 1 and Algorithm 2) for computing the approximating function, whose complexity is at most polynomial in the dimension $d$ and in the number $m$ of points.


Due to the arbitrariness of $A$, the choice of the sampling points will be according to suitable random distributions and our results hold with overwhelming probability.
Our approach uses tools taken from the {\it compressed sensing} framework, recent Chernoff bounds for sums of positive-semidefinite matrices, and classical stability bounds for invariant subspaces of singular value decompositions.
\end{abstract}

\noindent
{\bf AMS subject classification (MSC 2010):} 65D15, 03D32, 68Q30, 60B20, 60G50
\\

\noindent
{\bf Key Words:} high dimensional function approximation, compressed sensing, Chernoff bounds for sums of positive-semidefinite matrices, stability bounds for invariant subspaces of singular value decompositions.
%


\section{Introduction}
\subsection{Learning high dimensional functions from few samples}
In large scale data analysis and learning, several real-life problems can be formulated as capturing or approximating a function 
defined on $\Omega \subset \mathbb R^d$ with dimension $d$ very large, from relatively few given samples or queries. The usual
 assumption on the class of functions to be recovered is smoothness. The more regular a function is, the more accurately and 
 the more efficiently it can be numerically approximated. 
However, in the field of {\it information based complexity} it has been clarified that such a problem is in general {\it intractable}, 
i.e., it does not have polynomial complexity. To clarify this poor approximation phenomenon, assume
$$
\mathcal F_d:=\{ f:[0,1]^d \to \mathbb R, \| D^\alpha f\|_\infty \leq 1, \alpha \in \mathbb N_0^d\},
$$
to be the class of smooth functions we would like to approximate. We define the sampling operator $S_n = \phi \circ N$, 
where $N:\mathcal F_d \to \mathbb R^n$ is a suitable measurement operator and $\phi:\mathbb R^n \to L_\infty([0,1]^d)$ a 
recovery map. For example $N$ can take $n$ samples $f(x_i)$, $i=1,\dots,n$ of $f$ and $\phi$ can be a suitable interpolation operator.
The approximation error provided by such a sampling operator is given by
$$
e(S_n) := \sup_{f \in \mathcal F_d} \| f - S_n(f) \|_\infty.
$$
With this notion we further define the approximation numbers
$$
e(n,d) := \inf_{S_n} e(S_n),
$$
indicating the performance of the best sampling method, and 
\begin{equation}\label{eq:infcompl}
n(\varepsilon, d) := \inf\{n: e(n,d)\leq \varepsilon \},
\end{equation}
which is the minimal number of samples we need for the best sampling method to achieve a uniform accuracy $\varepsilon \in (0,1)$.

\subsection{Intractability results}
Recent results by Novak and Wo\'zniakowski \cite{nowo09} state that for a uniform approximation over $\mathcal F_d$ we 
have $e(n,d)=1$ for all $n \leq 2^{\lfloor d/2 \rfloor}-1$ or $n(\varepsilon, d) \geq 2^{\lfloor d/2 \rfloor}$ for all 
$\varepsilon \in (0,1)$. Hence, the number of samples to approximate even a $C^\infty$-function grows exponentially 
with the dimension $d$. This result seems to obliterate any hope for an efficient solution of the learning problem in 
high dimension, and this phenomenon is sometimes referred to as the {\it curse of dimensionality}. \\
Nevertheless, very often the high dimensional functions which we can expect as solutions to real-life problems exhibit 
more structure and eventually are much better behaved with respect to the approximation problem. There are several 
models currently appearing in the literature for which the approximation problem is {\it tractable}, i.e., the 
approximation error does not grow exponentially with respect to the dimension $d$.

According to the behavior of the \emph{information complexity} $n(\varepsilon,d)$, cf. \eqref{eq:infcompl},
for small $\varepsilon>0$ and large $d\in {\mathbb N}$, one speaks about
\begin{itemize}
\item \emph{polynomial tractability}: if $n(\varepsilon,d)$ depends polynomially on $\varepsilon^{-1}$ and $d$,
\item \emph{strong polynomial tractability}: if $n(\varepsilon,d)$ depends polynomially only on $\varepsilon^{-1}$,
\item \emph{weak tractability}: if $\displaystyle\lim_{\varepsilon^{-1}+d\to \infty} \frac{\log n(\varepsilon,d)}{\varepsilon^{-1}+d}=0$.
\end{itemize}
We point to \cite[Chapters 1 and 2]{nowo08} for further notions of tractability and many references.

In the next two subsections we will recount a few relevant approaches leading in some cases to (some sort of) tractability.
\subsection{Functions of few variables}
A function $ f:[0,1]^d \to \mathbb R$ of $d$ variables  ($d$ large) may be a sum of functions, which only depend on $k$ variables ($k$ small):
\begin{equation}
\label{model0}
f(x_1, \dots, x_d) = \sum_{\ell=1}^m g_\ell(x_{i_1},\dots, x_{i_k}).
\end{equation}
In optimization such functions are called {\it partially separable}. This model arises for instance in physics, when we consider problems involving interaction potentials, such as the Coulomb potential in electronic structure computations, or in social and economical models describing multiagent dynamics. Once $k$ is fixed and $d \to \infty$, the learning problem of such functions is tractable, even if the $g_\ell$ are not very smooth. We specifically refer to the recent work of DeVore, Petrova, and Wojtaszczyk \cite{depewo09} which describes an adaptive method for the recovery of high dimensional functions in this class, for $m=1$. \\
This model can be extended to functions which are only approximatively depending on few variables, by considering the unit ball $\mathcal H_{d,\gamma}$ of the weighted Sobolev space of functions $ f:[0,1]^d \to \mathbb R$ with
\begin{equation}
\label{model2}
\| f\|_{d,\gamma}^2 := \sum_{ u \subset [d]} \gamma_{d,u}^{-1} \int_{[0,1]^d} \left ( \frac{\partial^{|u|}}{\partial x_u} f(x) \right)^2 dx \leq 1, 
\end{equation}
where $[d] := \{1,\dots,d\}$, and $\gamma:=\{\gamma_{d,u}\}$ are non-negative weights; the definition $\frac{0}{0}:=0$ and the choice of $\gamma_{d,u}=0$ leads us again to the model \eqref{model0}. A study of the tractability of this class, for various weights, can be found in \cite{nowo08}.

\subsection{Functions of one linear parameter in high dimensions}
One of the weaknesses of the model classes introduced above is that they are very coordinate biased. It would be desirable to have results for a class of basis changes which would make the model basis-independent. A general model assumes that,
\begin{equation} \label{model1}
f(x) = g (A x),
\end{equation}
for $A$ an arbitrary $k\times d$ matrix.
While solution to this unconstrained problems have so far been elusive, the special case of
\begin{equation}\label{model1'}
f(x) = g (a \cdot x),
\end{equation}
where $a$ is a stochastic vector, i.e., $a = (a_1,\dots,a_d)$, $a_j \geq 0$, $\sum_{j=1}^d a_j =1$, and $g:[0,1] \to \mathbb R$ is a $\mathcal C^s$ function for $s>1$ has been fully addressed with an optimal recovery method in \cite{codadekepi10}.\\

The aim of this work is to find an appropriate formulation of the general model \eqref{model1}, which generalizes both the model of $k$ active coordinates as well as the model of one stochastic vector, and to analyze the tractability of the corresponding approximation problem. The rest of the paper is organized as follows. After introducing some basic notations, the next section is dedicated to the motivation and discussion of the generalized model. As an introduction to our formulation and solution approach, we then proceed to analyze the simple case of one active direction in Section~\ref{sec:kis1}, under milder assumptions on the vector $a=(a_1, \dots,a_d)$, before finally addressing the fully generalized problem in  Section~\ref{sec:klarger1}. The last section is dedicated to the discussion of further extensions of our approach, to be addressed in successive papers.


\subsection{Notations}
In the following we will deal exclusively with real matrices and we denote the space of $n \times m$ real matrices by $M_{n \times m}$. 
The entries of a matrix $X$ are denoted by lower case letters and the corresponding indices, i.e., $X_{ij}=x_{ij}$. The transposed matrix 
$X^T \in M_{m \times n}$ of a matrix $X \in M_{n \times m}$ is the matrix with entries $x_{ij}^T = x_{ji}$.
For $X \in   M_{n \times m}$ we can write its (reduced) {\it singular value decomposition} \cite{gova96} as
$$
 X= U \Sigma V^T
$$
with $U \in M_{n\times p}$, $V \in M_{m \times p}$, $p\leq \min(n,m)$, matrices with orthonormal columns and $\Sigma = \diag(\s_1,\dots,\s_p) \in M_{p \times p}$ a
diagonal matrix where $\s_1 \geq \s_2\geq \dots \geq \s_p \geq 0$ are the {\it singular values}. For specific 
matrices $X$ we write the singular value decomposition
$$
 X= U(X) \Sigma(X) V(X)^T = U_X \Sigma_X V_X^T.
$$
For symmetric, positive semidefinite matrices, i.e., $X = X^T$ and $v^T Xv\geq0$ for all vectors $v$, we can take $V=U$ and the singular value decomposition is equivalent to the eigenvalue decomposition. Note also that $\s_i(X) = \sqrt{\lambda_i(X^T X)}$, where $\lambda_i(X^T X)$ is the $i^{th}$ largest eigenvalue of the matrix $X^T X$ (actually, this holds for $n \geq m$, whereas we may want to consider $X X^T$ instead of $X^T X$ if $m >n$). The rank of $ X \in M_{n \times m}$ 
denoted by $\rank(X)$ is the number of nonzero singular values. 
We define the Frobenius norm of a matrix $X$ as
$$
\| X\|_F := \left ( \sum_{ij} |x_{ij}|^2 \right )^{1/2}.
$$
It is also convenient to introduce the $\ell_p^n$ vector norms 
$$
\| x\|_{\ell_p^n} := \left ( \sum_{i=1}^n |x_i|^p \right)^{1/p}, \quad 0 < p < \infty.
$$
We denote by $I_n \in M_{n \times n}$ the identity matrix. The symbol $B_{\mathbb R^n}$ stands for the unit ball
and $B_{\mathbb R^n}(r)$ for the ball of radius $r>0$ in $\mathbb R^n$. 
The unit sphere in $\mathbb R^n$ is denoted by $\mathbb S^{n-1}$. Finally, $\mathcal L^n$ indicates the Lebesgue measure in $\mathbb R^n$.

\section{The General Model $f(x) = g( A x)$ and Its Simplifications}

The first approach one may be tempted to consider to a generalization of \eqref{model1'}  is to ask that $f:[0,1]^d \to \mathbb R$ is of the form $f(x)=g(Ax)$,
where $A$ is a $k\times d$ stochastic matrix with orthonormal rows, i.e., $a_{ij} \geq 0$, $\sum_{j=1}^d a_{ij} =1$ for all $i=1,\dots,k$, $AA^T=I_k$, and $g:A([0,1]^d) \to \mathbb R$ is a $\mathcal C^s$ function for $s>1$. 
There are however two main problems with this formulation. The conditions of stochasticity and orthonormality of the rows of $A$ together are very restrictive - the only matrices satisfying both of them are those having only one non-negative entry per column - and the domain of $g$ cannot be chosen generically as $[0,1]^k$ but depends on $A$, i.e., it is the k-dimensional polytope $A([0,1]^d)$.
Thus we will at first return to the unconstrained model in~\eqref{model1} and give up the conditions of stochasticity and orthonormality. This introduces rotational invariance for the rows of A and the quadrant defined by $[0,1]^d$ is no longer set apart as search space. In consequence and to avoid the complications arising with the polytope $A([0,1]^d)$ we will therefore focus on functions defined on the Euclidean ball.\\
To be precise, we consider functions $f:B_{\mathbb R^d}(1+\bar\epsilon) \to \mathbb R$ of the form \eqref{model1}, where $A$ is an arbitrary $k \times d$ matrix whose rows are in $\ell_q^d$, for some $0< q \leq 1$,
$$
\left (\sum_{j=1}^d |a_{ij}|^q \right )^{1/q} \leq C_1.
$$
Further, we assume, that the function $g$ is defined on the image of 
$B_{\mathbb R^d}(1+\bar\epsilon)$ under the matrix $A$ and is twice continuously differentiable on this domain,
i.e., $g\in C^2(AB_{\mathbb R^d}(1+\bar\epsilon))$, and
$$
\max_{|\alpha|\le 2}\|D^\alpha g\|_\infty\le C_2.
$$
For $\mu_{\mathbb S^{d-1}}$ the uniform surface measure on the sphere $\mathbb S^{d-1}$ we define the matrix
\begin{equation}\label{eq:def:hf}
H^f := \int_{\mathbb S^{d-1}} \nabla f(x) \nabla f(x)^T d\mu_{\mathbb S^{d-1}}(x).
\end{equation}
From the identity $\nabla f(x)=A^T \nabla g(Ax)$ we get that
\begin{equation}\label{hfdecomposition}
H^f = A^T \cdot   \int_{\mathbb S^{d-1}} \nabla g(Ax) \nabla g(Ax)^T d\mu_{\mathbb S^{d-1}}(x) \cdot A,
\end{equation}
and therefore that the rank of $H^f$ is $k$ or less. We will require $H^f$ to be well conditioned, i.e., that its singular values satisfy $\sigma_1(H^f) \geq \dots \geq \sigma_k(H^f) \geq \alpha >0$.\\
The parameters in our model are the dimension $d$ (large), the linear parameter dimension $k$ (small), the nonnegative constants $C_1,C_2$, $0< q \leq 1$, and $0 < \alpha \leq kC_2^2$.\\
We now show that such a model can be simplified as follows. 
First of all we see that giving up the orthonormality condition on the rows of $A$ was actually unnecessary. Let us consider the singular value decomposition of $A = U \Sigma V^T$, hence we rewrite
$$
f(x) = g( A x) = \tilde g( \tilde A x), \quad \tilde A \tilde A^T = I_k,
$$
where $\tilde g(y) = g( U \Sigma y)$ and $\tilde A = V^T$. In particular, by simple direct computations,
\begin{itemize}
\item $\sup_{|\alpha|\leq 2} \| D^\alpha \tilde g\|_\infty \leq  \sup_{|\alpha|\leq 2} \| D^\alpha g\|_\infty \cdot \max\{\sqrt{k} \sigma_1(A), k \sigma_1(A)^2\}$, and
\item $\left (\sum_{j=1}^d |\tilde a_{ij}|^q \right )^{1/q} \leq  C_1\sigma_k(A)^{-1}k^{1/q-1/2}$.
\end{itemize}
Hence, by possibly considering different constants $\tilde C_1 = k^{1/q-1/2}  \sigma_k(A)^{-1} C_1$ and $\tilde C_2=\max\{\sqrt{k} \sigma_1(A), k \sigma_1(A)^2\} C_2$, 
we can always assume that $A A^T = I_k$, meaning $A$ is row-orthonormal. Note that for a row-orthonormal matrix $A$, 
equation \eqref{hfdecomposition} tells us that the singular values of $H^f$ are the same as those of $H_g$, where 
$$
H_g:=\int_{\mathbb S^{d-1}} \nabla g(Ax) \nabla g(Ax)^T d\mu_{\mathbb S^{d-1}}(x).
$$
The following simple result states that our model is almost well-defined. As we will see later, the conditions on $A$ and $f$ will be sufficient for the unique identification of $f$ by approximation up to any accuracy, but not necessarily for the unique identification of $A$ and $g$.
\begin{lemma}
Assume that $f(x) = g(A x) = \tilde g(\tilde A x)$ with $A, \tilde A$ two $k\times d$ matrices such that $A A^T = I_k = \tilde A \tilde A^T$ and that $H^f$ has rank $k$. Then $\tilde A = \mathcal O A$ for some $k \times k$ orthonormal matrix $\mathcal O$.
\end{lemma}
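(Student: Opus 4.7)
The plan is to show that the row spaces of $A$ and $\tilde A$ coincide, and then to exhibit the orthogonal change of basis explicitly.

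First I would observe that $\nabla f(x) = A^T \nabla g(Ax)$, so every gradient of $f$ lies in the range of $A^T$; integrating the rank-one outer products over $\mathbb{S}^{d-1}$ gives the factorization
\begin{equation*}
H^f = A^T H_g A, \qquad H_g := \int_{\mathbb{S}^{d-1}} \nabla g(Ax)\nabla g(Ax)^T\, d\mu_{\mathbb{S}^{d-1}}(x),
\end{equation*}
where $H_g$ is a $k\times k$ symmetric positive semidefinite matrix. Since $A$ is row-orthonormal, $\rank(A)=k$, and by the hypothesis $\rank(H^f)=k$, the subadditivity $\rank(H^f)\le \rank(H_g)$ forces $H_g$ to be invertible. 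Consequently the column (= range) space of $H^f$ equals the column space of $A^T$. The identical argument applied to the decomposition $f(x)=\tilde g(\tilde A x)$ yields that the column space of $H^f$ equals the column space of $\tilde A^T$ as well. Thus the row spaces of $A$ and $\tilde A$ are the same $k$-dimensional subspace of $\mathbb{R}^d$.

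Next I would define the candidate orthogonal matrix $\mathcal{O} := \tilde A A^T \in M_{k\times k}$ and verify both that $\tilde A = \mathcal{O}A$ and that $\mathcal{O}$ is orthogonal. The key identity is that the $d\times d$ matrix $A^T A$ is the orthogonal projection onto $\mathrm{range}(A^T)$; since the rows of $\tilde A$ all lie in this subspace, $\tilde A A^T A = \tilde A$, which gives $\mathcal{O} A = \tilde A$. Orthogonality then follows from
\begin{equation*}
\mathcal{O}\mathcal{O}^T = \tilde A A^T A \tilde A^T = \tilde A \tilde A^T = I_k,
\end{equation*}
using both row-orthonormality assumptions together with the projection identity just established.

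The whole argument is quite short, so the only substantive point is the rank bookkeeping in the first paragraph: without the hypothesis $\rank(H^f)=k$ one cannot conclude that $\mathrm{range}(H^f)$ is as large as $\mathrm{range}(A^T)$, and the two row spaces might fail to agree even though each contains $\mathrm{range}(H^f)$. This is the step where the hypothesis on $H^f$ is genuinely used; everything else is linear algebra of row-orthonormal matrices.
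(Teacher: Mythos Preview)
Your proof is correct and takes a cleaner, more elementary route than the paper's. The paper argues via the singular value decompositions of $H_g$ and $H_{\tilde g}$: writing $H_g = U\Sigma U^T$ and $H_{\tilde g} = \tilde U \Sigma \tilde U^T$ (same $\Sigma$, since row-orthonormality of $A,\tilde A$ forces $H_g,H_{\tilde g}$ to share the singular values of $H^f$), one obtains two SVDs $H^f = (U^T A)^T\Sigma(U^T A) = (\tilde U^T\tilde A)^T\Sigma(\tilde U^T\tilde A)$ and then invokes uniqueness of singular vectors up to an orthogonal transform to get $\tilde U^T\tilde A = V U^T A$, hence $\tilde A = (\tilde U V U^T)A$. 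You bypass all of this: from $\rank H^f = k$ you extract directly that $\mathrm{range}(H^f)=\mathrm{range}(A^T)=\mathrm{range}(\tilde A^T)$, and then the explicit candidate $\mathcal O = \tilde A A^T$ together with the projection identity $\tilde A A^T A = \tilde A$ finishes the job. Your argument avoids any appeal to SVD uniqueness and makes the role of the rank hypothesis more transparent; the paper's version, on the other hand, ties the lemma more visibly to the spectral structure of $H^f$ that is used elsewhere.
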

\begin{proof}
Because $A$ and $\tilde A$ are row-orthonormal the singular values of $H_g$ and $H_{\tilde g}$ are the same as those of $H^f$, i.e., we have $H_g = U \Sigma U^T$ and $H_{\tilde g} = \tilde U \Sigma \tilde U^T$, where $\Sigma$ is a $k\times k$ diagonal matrix containing the singular values of $H^f$ in nonincreasing order and $U, \tilde U$ are orthonormal $k\times k$ matrices. Inserting this into \eqref{hfdecomposition} we get
\begin{align*}
 H^f &=  A^T H_g A =  A^T  U \Sigma U^T A\\
  &=  \tilde A^T H_{\tilde g} \tilde A= \tilde A^T \tilde U \Sigma \tilde U^T \tilde A.
\end{align*}
$U^TA$ and $\tilde U^T \tilde A$ are both row-orthonormal, so we have two singular value decompositions of $H^f$. Because the singular vectors are unique up to an orthonormal transform, we have $\tilde U^T \tilde A = V U^TA$ for some orthonormal matrix $V$ or $\tilde A = \mathcal O A$ for $\mathcal O =  \tilde U V U^T$, which is by construction orthonormal.
\end{proof}
With the above observations in mind, let us now restate the problem we are addressing and summarize our requirements. We restrict the learning problem to functions 
$f:B_{\mathbb \R^d}(1+\bar \epsilon) \to \mathbb R$ of the form $f(x) = g(A x)$, 
where $A \in M_{k \times d}$ and $A A^T = I_k$. As we are interested in recovering $f$ 
from a small number of samples, the accuracy will depend on the smoothness of $g$. In order to get simple convergence estimates, 
we require $g \in C^2(B_{\mathbb \R^k}(1+\bar \epsilon))$. These choices determine two positive constants $C_1, C_2$ for which
\begin{equation}
\label{cond1}
\left (\sum_{j=1}^d |a_{ij}|^q \right )^{1/q} \leq C_1,
\end{equation}
and
\begin{equation}
\label{cond2}
\sup_{|\alpha|\leq 2} \| D^\alpha g\|_\infty \leq C_2. 
\end{equation}
For the problem to be well-conditioned we need that the matrix $H^f$ is positive definite
\begin{equation}
\label{cond3}
\sigma_1(H^f) \geq \dots \geq \sigma_k(H^f) \geq \alpha,
\end{equation}
for a fixed constant $\alpha >0$ (actually later we may simply choose $\alpha= \sigma_k(H^f)$).

\begin{remark}\label{conditionref} Let us shortly comment on condition \eqref{cond3} in the most simple case $k=1$, by showing that such a condition is actually necessary in order to formulate a \emph{tractable} algorithm for the uniform approximation of $f$ from point evaluations. 
\\
The optimal choice of $\alpha$ is given by
\begin{equation}\label{model:alpha}
\alpha=\int_{{\mathbb S}^{d-1}}|g'(a\cdot x)|^2 d\mu_{{\mathbb S}^{d-1}}(x)=\frac{\Gamma(d/2)}{\pi^{1/2}\Gamma((d-1)/2)}\int_{-1}^1(1-|y|^2)^{\frac{d-3}{2}}dy,
\end{equation}
cf. Theorem \ref{thmpush}. Furthermore, we consider the function $g\in C^2([-1-\bar\epsilon,1+\bar\epsilon])$
given by $g(y)=8(y-1/2)^3$ for $y\in[1/2,1+\bar\epsilon]$ and zero otherwise. Notice that, for every $a\in\R^d$ with $\|a\|_{\ell_2^d}=1$, the function $f(x)=g(a\cdot x)$
vanishes everywhere on ${\mathbb S}^{d-1}$ outside of the cap ${\mathcal U}(a,1/2):=\{x\in{\mathbb S}^{d-1}:a\cdot x\ge 1/2\}$, see Figure \ref{gcap}. The $\mu_{{\mathbb S}^{d-1}}$ measure of ${\mathcal U}(a,1/2)$ obviously does not depend on $a$ and is known to be exponentially small in $d$ \cite{le01}, see also Section \ref{sec:tract}. Furthermore, it is known, 
that there is a constant $c>0$ and unit vectors $a^1,\dots,a^K$, such that the sets ${\mathcal U}(a^1,1/2),\dots,{\mathcal U}(a^K,1/2)$ are mutually disjoint and $K\ge e^{cd}$.
Finally, we observe that $\max_{x\in{\mathbb S}^{d-1}}|f(x)|=f(a)=g(1)=1.$

\begin{figure}[h]
\hfill {\includegraphics[width=4.5cm]{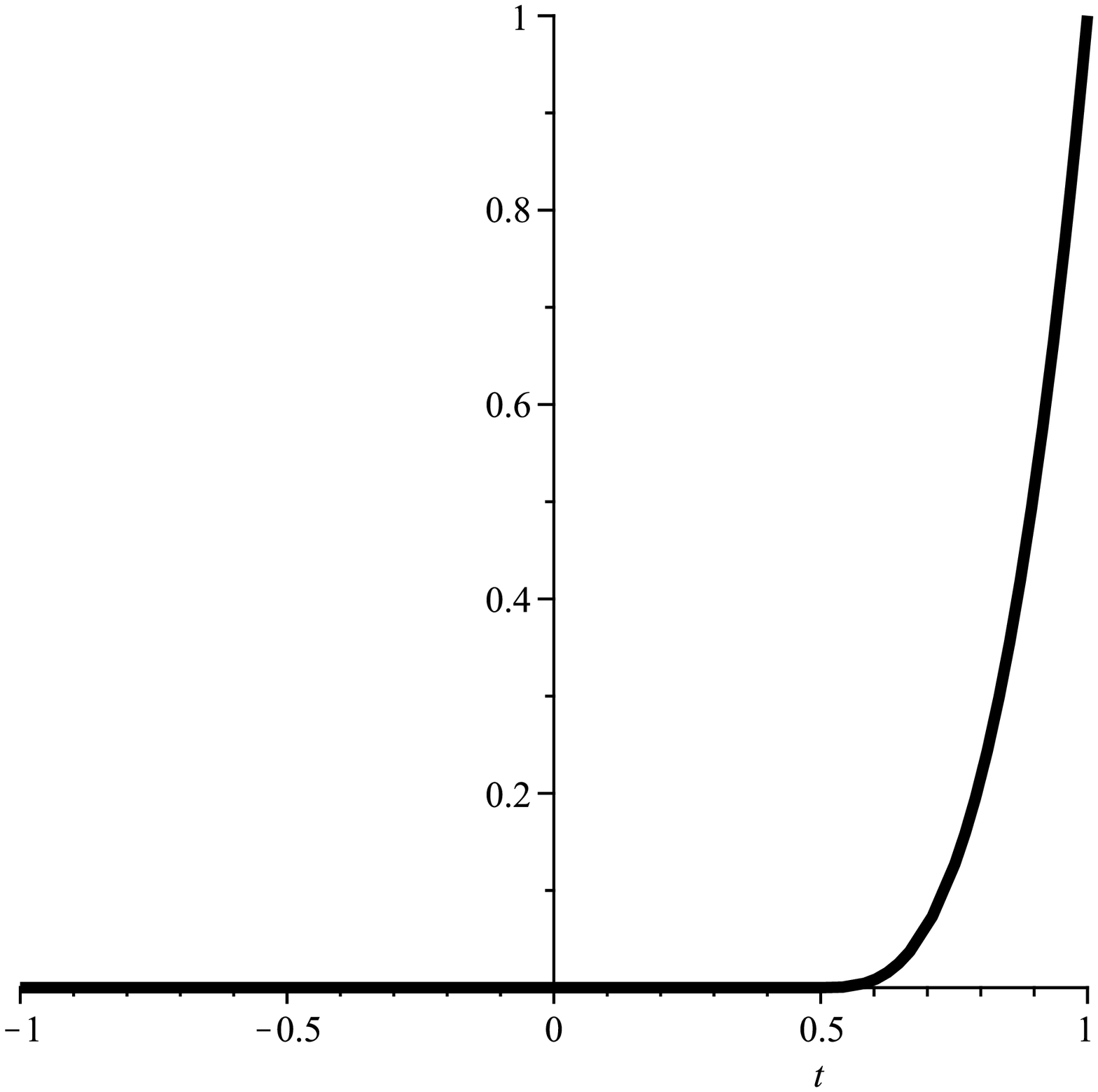}}
\hfill {\includegraphics[width=4.5cm]{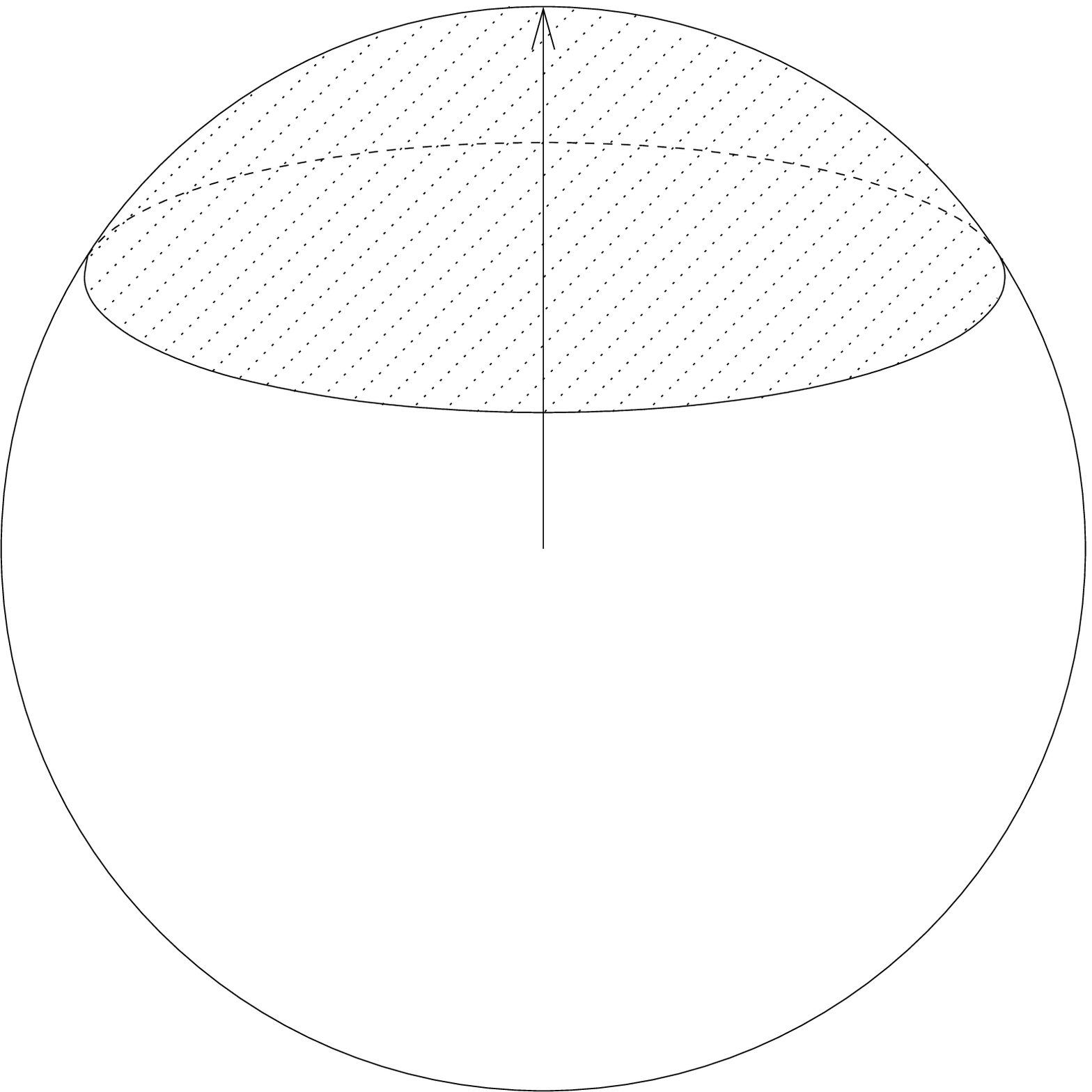}}\hfill\ \\
\caption{The function $g$ and the spherical cap ${\mathcal U}(a,1/2)$.}
\label{gcap}
\end{figure}

We conclude that \emph{any} algorithm making only use of the structure of $f(x)=g(a\cdot x)$ and the condition \eqref{cond2} needs to use exponentially many sampling points
in order to distinguish between $f(x) \equiv 0$ and $f(x)=g(a^i\cdot x)$ for some of the $a^i$'s as constructed above. Hence, some additional conditions like \eqref{cond1} and 
\eqref{cond3} are actually necessary to avoid the curse of dimensionality and to achieve at least some sort of tractability. Let us observe that $\alpha=\alpha(d)$
decays exponentially with $d$ for the function $g$ considered above. We shall further discuss the role of $\alpha$ in Section \ref{sec:tract}.
\end{remark}


Contrary to the approach in \cite{codadekepi10} our strategy used to learn functions of the type \eqref{model1} is to first find an approximation $\hat{A}$ to $A$. 
Once this is known, we will give a pointwise definition of the function $\hat{g}$ on $B_{\mathbb \R^k}(1)$ such that $\hat f (x) := \hat g(\hat A x)$ is a good 
approximation to $f$ on $B_{\mathbb \R^d}(1)$. This will be in a way such that the evaluation of $\hat g$ at one point will require only one function evaluation 
of $f$. Consequently, an approximation of $\hat{g}$ on its domain $B_{\mathbb \R^k}(1)$ using standard techniques, like sampling on a regular grid and spline-type 
approximations, will require a number of function evaluations of $f$ depending only on the desired accuracy and $k$, but not on $d$. We will therefore restrict 
our analysis to the problem of finding $\hat A$, defining $\hat g$, and the amount of queries necessary to do that.

\section{The One Dimensional Case $k=1$}\label{sec:kis1}
For the sake of an easy introduction, we start by addressing our recovery method again in the simplest case of a {\it ridge function}
\begin{equation}
\label{ridge}
f(x) = g(a \cdot x), 
\end{equation}
where $a=(a_1,\dots,a_d)\in \mathbb R^d$ is a row vector, $\| a\|_{\ell_2^d}=1$, and 
$g$ is a function from the image of $B_{\mathbb R^d}(1+\bar \epsilon)$ 
under $a$ to $\mathbb R$, i.e., $g: B_{\mathbb \R}(1+\bar \epsilon) \to \mathbb R$.
\\
The ridge function terminology was introduced in the 1970's by Logan and Shepp \cite{losh75} in connection with the
mathematics of computer tomography. However these functions have been considered for some time, but under the name of 
\emph{plane waves}. See, for example, \cite{cohi62,jo55}.
Ridge functions and ridge function approximation are studied in statistics. There they often go under the name of projection pursuit. Projection pursuit algorithms approximate a function of $d$ variables by functions of the form
\begin{equation}\label{kridge'}
f(x) \approx \sum_{j=1}^\ell g_j(a_j \cdot x).
\end{equation}
Hence the recovery of $f$ in \eqref{ridge} from few samples can be seen as an instance of the projection pursuit problem. For a survey on some approximation-theoretic questions concerning ridge functions and their connections to neural networks, see \cite{pi99} and references therein, 
and the work of Cand\`es and Donoho on ridgelet approximation \cite{ca99,ca03,cado99}.\\
For further clarity of notations, in the following we will assume $a$ to be a row vector, i.e., a $1 \times d$ matrix, while other vectors, $x, \xi, \varphi \dots$, are always assumed to be column vectors. Hence the symbol $a \cdot x$ stands for the product of the $1 \times d$ matrix $a$ with the $d \times 1$ vector $x$.

\subsection{The Algorithm}\label{sec:algkis1}
As in \cite{codadekepi10} a basic ingredient of the algorithm is a version of Taylor's theorem giving access to the vector $a$.
For $\xi \in B_{\mathbb \R^d}$, $\varphi \in B_{\mathbb R^d}(r)$, $\epsilon,r \in \mathbb R_+$, with $r \epsilon \leq \bar \epsilon$, 
we have, by Taylor expansion, the identity
\begin{eqnarray}
[g'(a \cdot \xi) a ]\cdot \varphi &=& \frac{\partial f}{\partial \varphi} (\xi) \nonumber\\
&=& \frac{f(\xi + \epsilon \varphi) - f(\xi)}{\epsilon} - \frac{\epsilon}{2} [\varphi^T \nabla^2 f(\zeta) \varphi]\label{taylor},
\end{eqnarray}
for a suitable $\zeta(\xi,¸\varphi) \in B_{\mathbb R^d}(1+\bar \epsilon)$.
Thanks to our assumptions \eqref{cond1} and \eqref{cond2}, the term $[\varphi^T \nabla^2 f(\zeta) \varphi]$ 
is uniformly bounded as soon as $\varphi$ is bounded. 
We will consider the above equality for several directions $\varphi_i$ and at several sampling points $\xi_j$.



To be more precise we define two sets $\mathcal X, \Phi$ of points. The first 
\begin{equation}
\label{Xpoints}
\mathcal X = \{ \xi_j \in \mathbb S^{d-1}: j=1,\dots, m_{\mathcal X}\},
\end{equation}
contains the $m_{\mathcal X}$ sampling points and is drawn at random in $\mathbb S^{d-1}$ according to the probability measure $\mu_{\mathbb S^{d-1}}$. For the second, containing the $m_{\Phi}$ derivative  directions, we have
\begin{eqnarray}
\Phi &=& \left \{ \varphi_i \in B_{\mathbb R^d}(\sqrt d/\sqrt{m_\Phi}): 
\varphi_{i\ell} = \frac{1}{\sqrt m_\Phi}
\left \{ 
\begin{array}{ll}
1, & \mbox{ with probability 1/2}, \nonumber \\
-1, & \mbox{ with probability 1/2},
\end{array}
\right . \right.\\
&& \left . \phantom{XXXXXXXXXXXXXXXXX} i=1,\dots, m_\Phi, \mbox{ and } \ell=1,\dots,d  \right \}. \label{Phipoints}
\end{eqnarray}
Actually we identify $\Phi$ with the $m_\Phi \times d$ matrix whose rows are the vectors $\varphi_i$. To write the $m_{\mathcal X} \times m_\Phi$ instances of~\eqref{taylor} in a concise way we collect the directional derivatives $g'(a \cdot \xi_j) a$, $j=1,\dots, m_{\mathcal X}$ as columns in the $d \times m_{\mathcal X}$ matrix $X$, i.e.,  
\begin{equation}
X=(g'(a \cdot \xi_1) a^T,\ldots, g'(a \cdot \xi_{m_\mathcal{X}}) a^T),
\end{equation}
and we define the $m_\Phi \times m_{\mathcal X}$ matrices $Y$ and $\mathcal E$ entrywise by
\begin{equation}
\label{yij}
y_{ij} = \frac{f(\xi_j + \epsilon \varphi_i) - f(\xi_j)}{\epsilon},
\end{equation}
and
\begin{equation}
\label{eij}
\varepsilon_{ij} =   \frac{\epsilon}{2} [\varphi^T_i \nabla^2 f(\zeta_{ij}) \varphi_i].
\end{equation}
We denote by $y_j$ the columns of $Y$ and by $\varepsilon_j$ the columns of $\mathcal E$, $j=1,\dots,m_{\mathcal X}$.
With these matrices we can write the following factorization
\begin{equation}
\label{factor0}
\Phi X = Y - \mathcal E.
\end{equation}
The algorithm we propose to approximate the vector $a$ is now based on the fact that the matrix $X$ has a very special structure,
i.e., $X= a^T \mathcal G^T$,
where $\mathcal G = ( g'(a \cdot \xi_1), \dots, g'(a \cdot \xi_{m_\mathcal{X}}))^T$. In other words every column $x_j$ is a scaled copy of the vector $a^T$ and {\it compressible} if $a$ is {\it compressible}. 
We define a vector $a$ compressible informally by saying that it can be well approximated in $\ell_p$-norm by a sparse vector. Actually, any vector $a$ with small $\ell_q$-norm can be approximated in $\ell_p$ by its best $K$-term approximation $a_{[K]}$ according to the following well-known estimate
\begin{equation}\label{bestkterm}
\sigma_K(x)_{\ell_p^d}:=\| a- a_{[K]}\|_{\ell_p^d} \leq \|a\|_{\ell_q^d} K^{1/p-1/q}, \quad p\geq q.
\end{equation}
Thus by changing view point to get
$$
Y=\Phi X + \mathcal E
$$
we see that due to the random construction of $\Phi$ we actually have a {\it compressed sensing} problem and known theory tells us that we can recover a stable 
approximation $\hat x_j$ to $x_j$ via $\ell_1$-minimization (see Theorem \ref{thm:cs} for the precise statement). To get an approximation of $a$ we then simply 
have to set $\hat a = \hat x_j/\|\hat x_j\|_{\ell_2^d}$ for $j$ such that $\|\hat x_j\|_{\ell_2^d}$ is maximal. 
From these informal ideas we derive the following algorithm.\\

\fbox{
\begin{minipage}{14cm}
\noindent{\bf Algorithm 1}:
{\it
\begin{itemize}
\item Given  $m_\Phi, m_{\mathcal X}$, draw at random the sets $\Phi$ and $\mathcal X$ as in \eqref{Xpoints} and \eqref{Phipoints}, and construct $Y$ according to~\eqref{yij}.
\item Set $\hat x_j = \Delta (y_j) := \argmin_{y_j=\Phi z } \| z\|_{\ell_1^d}$.
\item Find 
\begin{equation}\label{maxchoice}
j_0=\arg\max_{j=1, \ldots, m_{\mathcal{X}}} \|\hat x_j\|_{\ell_2^d}.
\end{equation}
\item Set $\hat a = \hat x_{j_0}/\|\hat x_{j_0}\|_{\ell_2^d}$.
\item Define $\hat g (y):= f(\hat a^T y)$ and $\hat f (x): = \hat g (\hat a \cdot x)$.
\end{itemize}
}
\end{minipage}
}
\\

The quality of the final approximation clearly depends on the error between $\hat x_j$ and $x_j$, which can be controlled 
through the number of {\it compressed sensing measurements} $m_\Phi$, and the size of $\hat a \approx \max_j \|x_j\|_{\ell_2^d} 
= \max_j |g'(a\cdot \xi_j)|$, which is related to the number of random samples $m_\mathcal{X}$. If $\eqref{model:alpha}$ is satisfied with
$\alpha$ large, we shall show in Lemma \ref{lem2} with help of Hoeffding's inequality that also $\max_j \|x_j\|_{\ell_2^d}=\max_j |g'(a\cdot \xi_j)|$ 
is large with high probability. If the value of $\alpha$ is unknown and small, the values of $\|\hat x_j\|_{\ell_2^d}$ produced by Algorithm 1
could be small as well and, as discussed after the formula \eqref{model:alpha}, no reliable and tractable approximation procedure is possible.

To be exact we will in the 
next section prove the following approximation result.

\begin{theorem} \label{thm:kis1}
Let $0<s<1$ and $\log d \le m_\Phi\le [\log 6]^{-2} d$. Then there is a constant $c_1'$ such that 
using $m_\mathcal{X} \cdot (m_\Phi+1)$ function evaluations of $f$, Algorithm 1 defines a function 
$\hat f:B_{\mathbb R^d}(1+\bar \epsilon) \to \mathbb R$ that, with probability 
\begin{equation}
1-\left (e^{-c'_1 m_\Phi}+e^{-\sqrt{m_\Phi d}} + 2 e^{-\frac{2 m_{\mathcal X} s^2 \alpha^2}{C_2^4}} \right ),\label{prob:kis1}
\end{equation}
will satisfy
\begin{equation}
\|f -\hat f\|_\infty \leq 2 C_2(1+ \bar \epsilon) \frac{ \nu_1} { \sqrt{\alpha(1-s)}- \nu_1},\label{error:kis1}
\end{equation}
where
\begin{equation}\label{eq:nu_1}
\nu_1=C'\left(\left[\frac{m_\Phi}{\log(d/m_\Phi)}\right]^{1/2-1/q}+\frac{\epsilon}{\sqrt{m_\Phi}}\right)
\end{equation}
and $C'$ depends only on $C_1$ and $C_2$ from \eqref{cond1} and \eqref{cond2}.
\end{theorem}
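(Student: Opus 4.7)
The plan is to reduce the uniform approximation error $\|f-\hat f\|_\infty$ to the $\ell_2$-distance between $\hat a^T$ and the true direction (up to sign) $\pm a^T$, and then to assemble three independent probabilistic ingredients that together control this distance. Because $\hat f(x)=f(\hat a^T\hat a\,x)$ and $\hat a^T\hat a$ is the orthogonal projector onto $\operatorname{span}(\hat a^T)$, a mean-value argument using $\nabla f=g'(a\cdot\,)\,a^T$ and $|g'|\le C_2$ gives
\[
|f(x)-\hat f(x)|\le C_2\,\|x\|_{\ell_2^d}\,\|a(I-\hat a^T\hat a)\|_{\ell_2^d}.
\]
A short computation using the idempotence of $I-\hat a^T\hat a$ shows $\|a(I-\hat a^T\hat a)\|_{\ell_2^d}^2=1-(a\hat a^T)^2\le\|\hat a^T-\epsilon\,a^T\|_{\ell_2^d}^2$ for the sign $\epsilon=\sgn(a\hat a^T)$, and since $\|x\|_{\ell_2^d}\le1+\bar\epsilon$ the theorem reduces to showing $\|\hat a^T-\epsilon\,a^T\|_{\ell_2^d}\le 2\nu_1/(\sqrt{\alpha(1-s)}-\nu_1)$ on an event of the claimed probability.

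For the first ingredient I would control the compressed-sensing error uniformly in $j$. Each column $x_j=g'(a\cdot\xi_j)\,a^T$ is a scalar multiple of the $\ell_q$-compressible vector $a^T$, so by \eqref{bestkterm} one has $\sigma_K(x_j)_{\ell_1^d}\le C_1C_2\,K^{1-1/q}$. On the noise side, for $k=1$ one has $\varepsilon_{ij}=\tfrac{\epsilon}{2}\,g''(a\cdot\zeta_{ij})(a\cdot\varphi_i)^2$, and a concentration estimate for the Rademacher sums $a\cdot\varphi_i$, followed by summing over $i$, yields $\|\varepsilon_j\|_{\ell_2^{m_\Phi}}\lesssim\epsilon\,C_2/\sqrt{m_\Phi}$ simultaneously for all $j$, on an event of probability at least $1-e^{-\sqrt{m_\Phi d}}$. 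Combining this with the RIP/$\ell_1$-decoding bound of Theorem~\ref{thm:cs} (which requires $m_\Phi\ge\log d$ and holds with probability at least $1-e^{-c_1'm_\Phi}$), and taking $K\asymp m_\Phi/\log(d/m_\Phi)$, produces
\[
\max_{1\le j\le m_\mathcal X}\|\hat x_j-x_j\|_{\ell_2^d}\le\nu_1,
\]
with $\nu_1$ exactly as in \eqref{eq:nu_1}.

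For the second ingredient I apply Hoeffding's inequality to the bounded i.i.d.\ random variables $|g'(a\cdot\xi_j)|^2\in[0,C_2^2]$; their mean equals $\alpha$ since for $k=1$ the matrix $H^f=H_g$ reduces to the scalar $\alpha$. This gives, with probability at least $1-2e^{-2m_\mathcal X s^2\alpha^2/C_2^4}$, that $m_\mathcal X^{-1}\sum_j|g'(a\cdot\xi_j)|^2\ge(1-s)\alpha$, hence some index $j^*$ satisfies $\|x_{j^*}\|_{\ell_2^d}=|g'(a\cdot\xi_{j^*})|\ge\sqrt{(1-s)\alpha}$. On the intersection of the three good events, the CS bound gives $\|\hat x_{j^*}\|_{\ell_2^d}\ge\sqrt{(1-s)\alpha}-\nu_1$, so by the maximization rule \eqref{maxchoice} also $\|\hat x_{j_0}\|_{\ell_2^d}\ge\sqrt{(1-s)\alpha}-\nu_1$ and therefore $\|x_{j_0}\|_{\ell_2^d}\ge\sqrt{(1-s)\alpha}-2\nu_1$. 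Since $x_{j_0}/\|x_{j_0}\|_{\ell_2^d}=\sgn(g'(a\cdot\xi_{j_0}))\,a^T$, a standard normalization-stability estimate applied to $\hat x_{j_0}$ and $x_{j_0}$ (their difference has $\ell_2^d$-norm at most $\nu_1$, while the smaller of the two norms is at least $\sqrt{\alpha(1-s)}-\nu_1$) yields $\|\hat a^T-\epsilon\,a^T\|_{\ell_2^d}\le 2\nu_1/(\sqrt{\alpha(1-s)}-\nu_1)$, and a union bound over the three events produces the failure probability \eqref{prob:kis1}.

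The main technical obstacle I anticipate is the uniform-in-$j$ control of the compressed-sensing noise $\|\varepsilon_j\|_{\ell_2^{m_\Phi}}$: the \emph{same} draw of $\Phi$ is reused across all $m_\mathcal X$ columns, so the bound $\|\varepsilon_j\|_{\ell_2^{m_\Phi}}\lesssim\epsilon/\sqrt{m_\Phi}$ must hold for every $j$ on a single concentration event, and one has to pick the right tail inequality for the Rademacher quadratic forms $(a\cdot\varphi_i)^2$ to avoid losing logarithmic factors that would otherwise spoil the clean form of $\nu_1$ in \eqref{eq:nu_1}. Everything else is essentially bookkeeping: the $\ell_q$-compressibility of $a$ combined with $K\asymp m_\Phi/\log(d/m_\Phi)$ generates the first summand in $\nu_1$, and the three concentration events line up exactly with the three exponentials in \eqref{prob:kis1}.
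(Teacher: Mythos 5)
Your overall architecture is exactly the paper's: reduce $\|f-\hat f\|_\infty$ to $\|\hat a^T-\epsilon a^T\|_{\ell_2^d}$ via the projector $\hat a^T\hat a$, recover the columns $x_j$ by $\ell_1$-decoding, lower-bound $\max_j|g'(a\cdot\xi_j)|$ by Hoeffding, and finish with a normalization-stability estimate (the paper's Lemma \ref{lem1}). All of that is correct, and the union bound lines up with \eqref{prob:kis1}.

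The one genuine gap is the step you yourself flag as the ``main technical obstacle'': the bound $\|\varepsilon_j\|_{\ell_2^{m_\Phi}}\lesssim \epsilon/\sqrt{m_\Phi}$. You propose to get it from a concentration inequality for the Rademacher sums $a\cdot\varphi_i$, uniformly over $i$ and $j$, and you attach the probability $1-e^{-\sqrt{m_\Phi d}}$ to that event. This does not work as stated: a Hoeffding-type tail for $a\cdot\varphi_i$ with $\|a\|_{\ell_2^d}=1$ gives $(a\cdot\varphi_i)^2\lesssim t^2/m_\Phi$ only with probability $1-2e^{-t^2/2}$, and a union bound over the $m_\Phi$ directions forces $t^2\gtrsim\log m_\Phi$, which puts an extra $\log m_\Phi$ factor into $\nu_1$ and spoils \eqref{eq:nu_1}; moreover it would add a fourth failure event not present in \eqref{prob:kis1} (in the paper, $e^{-\sqrt{m_\Phi d}}$ is part of the probability that the decoder $\Delta$ satisfies the uniform guarantee \eqref{woj1} of Theorem \ref{thm:cs}(ii), not a noise-concentration event). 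The correct resolution is that no probability is needed at all: since $\varphi_{i\ell}=\pm 1/\sqrt{m_\Phi}$ and $q\le 1$ in \eqref{cond1}, one has deterministically $|a\cdot\varphi_i|\le m_\Phi^{-1/2}\sum_k|a_k|\le C_1 m_\Phi^{-1/2}$, hence $|\varepsilon_{ij}|=\tfrac{\epsilon}{2}|g''(a\cdot\zeta_{ij})|\,(a\cdot\varphi_i)^2\le \tfrac{1}{2}C_1^2C_2\,\epsilon/m_\Phi$ for every $i,j$, and therefore $\|\varepsilon_j\|_{\ell_2^{m_\Phi}}\le \tfrac{1}{2}C_1^2C_2\,\epsilon/\sqrt{m_\Phi}$. (The hypothesis $m_\Phi\ge\log d$ is then used to absorb the term $\sqrt{\log d}\,\|\varepsilon_j\|_{\ell_\infty^{m_\Phi}}$ appearing in \eqref{woj1}.) This is precisely the point where the $\ell_q$-condition on $a$ enters the noise estimate, and it is why $C'$ depends on $C_1$. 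With this replacement your argument closes; the remaining imprecisions (e.g.\ using $\|\hat x_{j_0}\|_{\ell_2^d}\ge\sqrt{\alpha(1-s)}-\nu_1$ rather than a lower bound on $\|x_{j_0}\|_{\ell_2^d}$ in the denominator) are harmless and match the paper's Lemma \ref{lem1}.
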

\begin{remark}
1. We shall fix $\nu_1$ as defined by \eqref{eq:nu_1} for the rest of this section. Furthermore, we suppose that
the selected parameters ($s, \epsilon$ and $m_\Phi$) are such that $\nu_1<\sqrt{\alpha(1-s)}$ holds. See Remark \ref{remark2} (ii)
for knowing how we can circumvent in practice the case that this condition may not hold, clearly invalidating the approximation \eqref{error:kis1}.

 2. In order to show a concrete application of the previous result,  let us consider, for simplicity,  a class of uniformly smooth functions $g$ such that $|g'(0)| \neq 0$; hence, by  Proposition \ref{alphad}, $\alpha=\alpha(g)>0$ is independent of the dimension $d$.
If additionally we choose $q=1$, $m_\Phi < d$, and $\epsilon>0$ such that 
$m_\Phi (\epsilon + \sqrt{ \log(d/m_\Phi)})^{-2}= \mathcal O(\delta^{-2} \alpha^{-1})$, 
$\delta>0$, for $\delta,\alpha \to 0$ and $m_{\mathcal X} = \mathcal O(\alpha^{-2})$ for $\alpha \to 0$, then, according to Theorem \ref{thm:kis1},
we obtain the uniform error estimate
$$
\|f -\hat f\|_\infty = \mathcal O\left ( \delta\right), \quad \delta \to 0,
$$
with high probability. Notice that, if $1/\log(d)>\delta>0$, then the number of evaluation points 
$m_{\mathcal X}\cdot (m_\Phi +1) =\mathcal O( (\delta \cdot \alpha)^{-3})$,
for $\delta,\alpha \to 0$, is actually independent of the dimension $d$.

\end{remark}
\subsection{The Analysis}
We will first show that $\hat x_j$ is a good approximation to $x_j$ for all $j$. This follows by the results from the framework of {\it compressed sensing} \cite{badadewa08,
carota06-1,codade09,do06-2,fo10,fo09,fora10}. In particular, we state the following useful result which is a specialization of Theorem 1.2 from \cite{woxx},
to the case of Bernoulli matrices. 
\begin{theorem}\label{thm:cs}
Assume that $\Phi$ is an $m\times d$ random matrix with all entries being independent Bernoulli variables scaled with $1/\sqrt{m}$, see, e.g., \eqref{Phipoints}.

(i) Let $0<\delta<1$. Then there are two positive constants $c_1,c_2>0$, such that the matrix $\Phi$ has the Restricted Isometry Property
\begin{equation}\label{RIP}
(1- \delta) \| x \|_{\ell_2^d}^2 \leq \| \Phi x \|_{\ell_2^m}^2\leq  (1+ \delta) \| x \|_{\ell_2^d}^2
\end{equation}
for all $x \in \mathbb R^d$ such that $\# \supp(x) \leq c_2 m/\log(d/m)$ with probability at least
\begin{equation}
1-e^{-c_1 m}.
\end{equation}

(ii) Let us suppose that $d>[\log 6]^2 m$. Then there are positive constants $C,c_1', c_2'>0$, such that, with probability at least
\begin{equation}\label{woj2}
1-e^{-c'_1 m}-e^{-\sqrt{md}},
\end{equation}
the matrix $\Phi$ has the following property. For every $x\in\R^d$, $\varepsilon \in \R^m$ and every natural number 
$K\le c_2'm/\log(d/m)$ we have
\begin{equation}\label{woj1}
\| \Delta( \Phi x + \varepsilon ) - x\|_{\ell_2^d} \leq 
C\left( K^{-1/2}\sigma_K(x)_{\ell_1^d} + \max\{\|\varepsilon \|_{\ell_2^m},\sqrt{\log d}\|\varepsilon \|_{\ell_\infty^m} \}  \right),
\end{equation}
where 
$$
\sigma_K(x)_{\ell_1^d}:=\inf\{\|x-z\|_{\ell_1^d}:\#\supp z\le K\}
$$
is the \emph{best $K$-term approximation of $x$.}
\end{theorem}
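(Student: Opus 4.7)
The plan is to chain three ingredients: stable compressed sensing recovery of each column $x_j$ via Theorem \ref{thm:cs}(ii), a Hoeffding lower bound on $\max_j\|x_j\|_{\ell_2^d}$, and a short geometric calculation transferring a direction estimate for $a$ to a uniform error bound on $\hat f$.

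First I would control the Taylor-remainder matrix $\mathcal E$. In the case $k=1$ we have $\nabla^2 f(\zeta)=g''(a\cdot\zeta)\,a^T a$, so $|\varepsilon_{ij}|\le\tfrac\epsilon2 C_2(a\cdot\varphi_i)^2$. Since $q\le 1$, H\"older's inequality gives $|a\cdot\varphi_i|\le\|a\|_{\ell_1^d}\|\varphi_i\|_{\ell_\infty^d}\le C_1/\sqrt{m_\Phi}$, whence $\|\varepsilon_j\|_{\ell_\infty^{m_\Phi}}\le C_1^2 C_2\epsilon/(2m_\Phi)$ and $\|\varepsilon_j\|_{\ell_2^{m_\Phi}}\le C_1^2 C_2\epsilon/(2\sqrt{m_\Phi})$; the hypothesis $m_\Phi\ge\log d$ makes the latter dominate $\sqrt{\log d}\,\|\varepsilon_j\|_{\ell_\infty^{m_\Phi}}$. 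Moreover $\|x_j\|_{\ell_q^d}=|g'(a\cdot\xi_j)|\,\|a\|_{\ell_q^d}\le C_1C_2$, so \eqref{bestkterm} gives $\sigma_K(x_j)_{\ell_1^d}\le C_1C_2\,K^{1-1/q}$. Taking $K=\lfloor c_2'\,m_\Phi/\log(d/m_\Phi)\rfloor$ and invoking Theorem \ref{thm:cs}(ii) (whose dimension hypothesis follows from $m_\Phi\le[\log 6]^{-2}d$), a single draw of $\Phi$ satisfies \eqref{woj1} for every $(x,\varepsilon)$ with probability at least $1-e^{-c_1'm_\Phi}-e^{-\sqrt{m_\Phi d}}$; applied to each $(x_j,\varepsilon_j)$ this yields $\|\hat x_j-x_j\|_{\ell_2^d}\le\nu_1$ for \emph{all} $j=1,\ldots,m_\mathcal X$ simultaneously, with $C'$ in \eqref{eq:nu_1} depending only on $C_1,C_2$ and the universal constant from Theorem \ref{thm:cs}(ii). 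No union bound in $m_\mathcal X$ is needed.

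Second, since $\|x_j\|_{\ell_2^d}=|g'(a\cdot\xi_j)|$ and $\max\ge$ mean,
$$
\|\hat x_{j_0}\|_{\ell_2^d}\ge\max_j|g'(a\cdot\xi_j)|-\nu_1\ge\Bigl(\tfrac{1}{m_\mathcal X}\sum_{j=1}^{m_\mathcal X}|g'(a\cdot\xi_j)|^2\Bigr)^{1/2}-\nu_1.
$$
The summands are i.i.d.\ with mean $\alpha$ (by \eqref{model:alpha}) and range $[0,C_2^2]$, so Hoeffding's inequality produces an empirical mean at least $(1-s)\alpha$ with probability at least $1-2e^{-2m_\mathcal X s^2\alpha^2/C_2^4}$. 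Hence $\|\hat x_{j_0}\|_{\ell_2^d}\ge\sqrt{\alpha(1-s)}-\nu_1>0$ on the intersection of the two good events, the positivity being guaranteed by the running assumption $\nu_1<\sqrt{\alpha(1-s)}$.

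Finally, let $\tau=\sgn(g'(a\cdot\xi_{j_0}))$, so that $\tau a^T=x_{j_0}/\|x_{j_0}\|_{\ell_2^d}$. The elementary normalization bound $\|u/\|u\|-v/\|v\|\|\le 2\|u-v\|/\max(\|u\|,\|v\|)$, applied to $u=\hat x_{j_0}$, $v=x_{j_0}$, gives $\|\hat a-\tau a\|_{\ell_2^d}\le 2\nu_1/(\sqrt{\alpha(1-s)}-\nu_1)$. From $\hat g(y)=f(\hat a^T y)=g((a\hat a^T)y)$ we read off $\hat f(x)=g\bigl((a\hat a^T)\,\hat a\cdot x\bigr)$; combining $|g(u)-g(v)|\le C_2|u-v|$ with the identity
$$
\|a-(a\hat a^T)\hat a\|_{\ell_2^d}^2=1-(a\hat a^T)^2\le 2(1-\tau a\hat a^T)=\|\hat a-\tau a\|_{\ell_2^d}^2
$$
(valid because $|a\hat a^T|\le 1$), for every $x\in B_{\R^d}(1+\bar\epsilon)$ we obtain $|f(x)-\hat f(x)|\le C_2(1+\bar\epsilon)\,\|\hat a-\tau a\|_{\ell_2^d}$, which is exactly \eqref{error:kis1}. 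Summing the three failure probabilities produces \eqref{prob:kis1}. The one delicate point is Step~1: both $\ell_2$ and $\ell_\infty$ norms of $\mathcal E$ must collapse into the single $\epsilon/\sqrt{m_\Phi}$ piece of $\nu_1$, and it is precisely the hypothesis $m_\Phi\ge\log d$ that makes this work.
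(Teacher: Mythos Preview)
Your proposal does not address the stated theorem. Theorem~\ref{thm:cs} is a compressed-sensing result about the random Bernoulli matrix $\Phi$ itself: part~(i) asserts RIP and part~(ii) asserts the $\ell_1$-decoder stability bound \eqref{woj1}. What you have written is instead a proof of Theorem~\ref{thm:kis1}, the approximation guarantee for Algorithm~1. Your very first step \emph{invokes} Theorem~\ref{thm:cs}(ii) as a black box, and the quantities you manipulate ($x_j$, $\hat x_{j_0}$, $\nu_1$, $\alpha$, $\hat f$) and the conclusions you reach (\eqref{error:kis1}, \eqref{prob:kis1}) all belong to Theorem~\ref{thm:kis1}, not to the statement above.

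The paper does not give a self-contained proof of Theorem~\ref{thm:cs}; the Remark immediately following it records that part~(i) is classical (see \cite{badadewa08}, \cite{fo10}) and that part~(ii) is obtained by combining Theorem~2.3 of \cite{woxx} with Theorem~3.5 of \cite{depewo09} and part~(i). A proof of Theorem~\ref{thm:cs} would have to establish RIP for scaled Bernoulli matrices and then derive the decoder bound \eqref{woj1} with the stated probability \eqref{woj2}; nothing in your write-up touches either of these.

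For completeness: read as a proof of Theorem~\ref{thm:kis1}, your argument is correct and follows the paper's own route (Corollary~\ref{cor1}, Lemma~\ref{lem2}, Lemma~\ref{lem1}) essentially step for step, with only cosmetic differences in the final geometric estimate.
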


\begin{remark}
\noindent (i) The first part of Theorem \ref{thm:cs} is well known, see, e.g., \cite{badadewa08} or \cite[Page 15]{fo10}
and references therein.\\
\noindent (ii) The second part of Theorem \ref{thm:cs} is relatively new. It follows from Theorem 2.3 of \cite{woxx}
combined with Theorem 3.5 of \cite{depewo09}, and the first part of Theorem \ref{thm:cs}.
Without the explicit bound of the probability \eqref{woj2}, it appears also as Theorem 1.2 in \cite{woxx}.
\end{remark}

\noindent Applied to the situation at hand we immediately derive the following corollary.

\begin{corollary}\label{cor1}
(i) Let $d>[\log 6]^2 m_\Phi$. Then with probability at least
$$
1-(e^{-c'_1 m_\Phi}+e^{-\sqrt{m_\Phi d}})
$$ 
all the vectors $\hat x_j = \Delta(y_j),\, j=1,\dots, m_{\mathcal X}$ calculated in Algorithm 1 satisfy
\begin{equation}\label{approx2}
\| x_j- \hat x_j\|_{\ell_2^d} \leq 
C\left(\left[\frac{m_\Phi}{\log(d/m_\Phi)}\right]^{1/2-1/q}+\max\{\|\varepsilon_j\|_{\ell_2^{m_\Phi}},\sqrt{\log d}\|\varepsilon_j \|_{\ell_\infty^{m_\Phi}} \}\right)
\end{equation}
where $C$ depends only on $C_1$ and $C_2$ from \eqref{cond1} and \eqref{cond2}.

(ii) If furthermore $m_\Phi\ge \log d$ holds, then with the same probability also
\begin{equation}\label{approx2-1}
\| x_j- \hat x_j\|_{\ell_2^d} \leq 
C'\left(\left[\frac{m_\Phi}{\log(d/m_\Phi)}\right]^{1/2-1/q}+\frac{\epsilon}{\sqrt{m_\Phi}}\right)
\end{equation}
where $C'$ depends again only on $C_1$ and $C_2$ from \eqref{cond1} and \eqref{cond2}.
\end{corollary}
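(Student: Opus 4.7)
The plan is to apply Theorem~\ref{thm:cs}(ii) to each column equation $y_j=\Phi x_j+\varepsilon_j$ extracted from the factorization \eqref{factor0}. The random event in \eqref{woj2} is a property of the single matrix $\Phi$ and does not depend on $j$; once it holds, \eqref{woj1} is available simultaneously for every pair $(x_j,\varepsilon_j)$, $j=1,\dots,m_{\mathcal X}$. So the probability bound claimed in both (i) and (ii) comes directly from \eqref{woj2}, with no union bound over $j$ required.

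To convert \eqref{woj1} into the first summand of \eqref{approx2}, I would use the structural observation that $x_j=g'(a\cdot\xi_j)\,a^T$ is just a scalar multiple of $a^T$. Conditions \eqref{cond1} and \eqref{cond2} then yield $\|x_j\|_{\ell_q^d}\le|g'(a\cdot\xi_j)|\,\|a\|_{\ell_q^d}\le C_1C_2$ uniformly in $j$, and the compressibility estimate \eqref{bestkterm} with $p=1$ gives $\sigma_K(x_j)_{\ell_1^d}\le C_1C_2\,K^{1-1/q}$. Selecting $K=\lfloor c_2'm_\Phi/\log(d/m_\Phi)\rfloor$, the largest sparsity for which \eqref{woj1} is legal, makes $K^{-1/2}\sigma_K(x_j)_{\ell_1^d}$ proportional to $[m_\Phi/\log(d/m_\Phi)]^{1/2-1/q}$. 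Together with \eqref{woj1}, this produces \eqref{approx2} with a constant absorbing $C$, $C_1$ and $C_2$, proving (i).

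For (ii) I need to bound $\max\{\|\varepsilon_j\|_{\ell_2^{m_\Phi}},\sqrt{\log d}\,\|\varepsilon_j\|_{\ell_\infty^{m_\Phi}}\}$ by $O(\epsilon/\sqrt{m_\Phi})$. Here I exploit that in the $k=1$ setting $\nabla^2 f(x)=g''(a\cdot x)\,a^Ta$, so from \eqref{eij}
$$
|\varepsilon_{ij}|=\frac{\epsilon}{2}\,|g''(a\cdot\zeta_{ij})|\,(a\cdot\varphi_i)^2\le\frac{\epsilon C_2}{2}\,(a\cdot\varphi_i)^2,
$$
and crucially the right-hand side is independent of $j$, so a single concentration event suffices. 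Since $a\cdot\varphi_i$ is a Rademacher sum with variance $1/m_\Phi$, a fourth-moment computation bounds $\sum_i(a\cdot\varphi_i)^4$ by $O(1/m_\Phi)$ (with failure probability exponentially small in $m_\Phi$), which gives $\|\varepsilon_j\|_{\ell_2^{m_\Phi}}=O(\epsilon/\sqrt{m_\Phi})$; a Hoeffding tail plus union bound over the $m_\Phi$ rows controls $\max_i|a\cdot\varphi_i|$ by $O(\sqrt{\log m_\Phi/m_\Phi})$, and under $m_\Phi\ge\log d$ this makes $\sqrt{\log d}\,\|\varepsilon_j\|_{\ell_\infty^{m_\Phi}}$ also $O(\epsilon/\sqrt{m_\Phi})$ (up to logarithmic factors absorbed into $C'$). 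Combining with part (i) gives \eqref{approx2-1}.

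I expect the main obstacle to be the bookkeeping of logarithmic factors in the noise bound, together with checking that the failure probabilities of the Rademacher concentrations are dominated by $e^{-c_1'm_\Phi}+e^{-\sqrt{m_\Phi d}}$, since the statement adds no extra probability term. Once this is verified, the argument is a direct synthesis of compressed sensing (via Theorem~\ref{thm:cs}) with the observation that each column $x_j$ inherits the $\ell_q$-compressibility of the row vector~$a$.
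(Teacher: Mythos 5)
Part (i) of your proposal is exactly the paper's argument: one application of Theorem~\ref{thm:cs}(ii) for the fixed matrix $\Phi$ (no union bound over $j$), followed by the bound $K^{-1/2}\sigma_K(x_j)_{\ell_1^d}\le |g'(a\cdot\xi_j)|\,\|a\|_{\ell_q^d}\,K^{1/2-1/q}\le C_1C_2\left[m_\Phi/\log(d/m_\Phi)\right]^{1/2-1/q}$ with $K\approx c_2'm_\Phi/\log(d/m_\Phi)$. That part is fine.

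Part (ii) contains a genuine gap: the probabilistic treatment of the noise term is both unnecessary and, as set up, does not deliver the stated bound. You correctly reduce to $|\varepsilon_{ij}|\le \frac{\epsilon C_2}{2}(a\cdot\varphi_i)^2$, but then invoke Rademacher concentration. First, controlling $\max_i|a\cdot\varphi_i|$ at the level $O(\sqrt{\log m_\Phi/m_\Phi})$ via Hoeffding plus a union bound over the $m_\Phi$ rows fails with probability that is only \emph{polynomially} small in $m_\Phi$; this cannot be dominated by $e^{-c_1'm_\Phi}+e^{-\sqrt{m_\Phi d}}$, and the corollary allows no extra probability term. Second, even granting that event, you obtain $\sqrt{\log d}\,\|\varepsilon_j\|_{\ell_\infty^{m_\Phi}}=O(\epsilon\,\log m_\Phi/\sqrt{m_\Phi})$ under $m_\Phi\ge\log d$, and the surplus $\log m_\Phi$ cannot be ``absorbed into $C'$'', since $C'$ must depend only on $C_1$ and $C_2$. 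The paper avoids all of this with a purely deterministic estimate that uses condition \eqref{cond1} a second time: since every entry of $\varphi_i$ equals $\pm 1/\sqrt{m_\Phi}$, one has surely
\begin{equation*}
|a\cdot\varphi_i|\le \frac{1}{\sqrt{m_\Phi}}\sum_{\ell=1}^d|a_\ell|=\frac{\|a\|_{\ell_1^d}}{\sqrt{m_\Phi}}\le \frac{\|a\|_{\ell_q^d}}{\sqrt{m_\Phi}}\le\frac{C_1}{\sqrt{m_\Phi}},
\end{equation*}
whence $\|\varepsilon_j\|_{\ell_\infty^{m_\Phi}}\le \frac{C_1^2C_2}{2m_\Phi}\epsilon$, $\|\varepsilon_j\|_{\ell_2^{m_\Phi}}\le\sqrt{m_\Phi}\,\|\varepsilon_j\|_{\ell_\infty^{m_\Phi}}\le\frac{C_1^2C_2}{2\sqrt{m_\Phi}}\epsilon$, and $\sqrt{\log d}\,\|\varepsilon_j\|_{\ell_\infty^{m_\Phi}}\le\frac{C_1^2C_2}{2\sqrt{m_\Phi}}\epsilon$ once $m_\Phi\ge\log d$. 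This yields \eqref{approx2-1} with no additional random events and no logarithmic loss; replacing your concentration step by this observation repairs the proof.
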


\begin{proof} We apply Theorem \ref{thm:cs} to the equation $y_j=\Phi x_j+\varepsilon_j$ and $K\leq c_2' m_\Phi/\log(d/m_\Phi)$.
To do so, we have to estimate the best $K$-term approximation error of $\sigma_K(x_j)_{\ell_1^d}$ and the size of the errors $\varepsilon_j$.
We start by bounding $\sigma_K(x_j)_{\ell_1^d}$. Recall that due to the construction of $X$ every column is a scaled copy of the vector $a^T$,
i.e., $x_j = g'(a \cdot \xi_j) a^T$, so we have by \eqref{bestkterm}
\begin{equation}
K^{-1/2}\sigma_K(x_j)_{\ell_1^d}\le |g'(a \cdot \xi_j)|\cdot\|a\|_{\ell_q^{d}}\cdot K^{1/2-1/q}\le C_1\,C_2\left[\frac{m_\Phi}{\log(d/m_\Phi)}\right]^{1/2-1/q}.
\end{equation}
This finishes the proof of the first part.

To prove the second part, we estimate the size of the errors using~\eqref{eij},
\begin{eqnarray}
\notag \|\varepsilon_j\|_{\ell_\infty^{m_\Phi}}&=&\frac{\epsilon}{2}\cdot \max_{i=1,\ldots, m_\Phi}|\varphi_i^T \nabla^2f(\zeta_{ij})\varphi_i|\\
\label{eq:este1}&=&\frac{\epsilon}{2m_\Phi}\cdot\max_{i=1,\ldots, m_\Phi}\left|\sum_{k,l=1}^d a_ka_lg''(a\cdot\zeta_{ij})\right| \\
\notag&\le&\frac{\epsilon\|g''\|_\infty}{2m_{\Phi}} \left(\sum_{k=1}^d |a_k|\right)^2
\le\frac{\epsilon\|g''\|_\infty}{2m_{\Phi}}\left(\sum_{k=1}^d |a_k|^q\right)^{2/q}
\le \frac{C_1^2 C_2}{2m_{\Phi}}\epsilon,\\
\label{eq:este2}\|\varepsilon_j\|_{\ell_2^{m_\Phi}}&\leq &\sqrt{m_\Phi} \|\varepsilon_j\|_{\ell_\infty^{m_\Phi}} \le \frac{C_1^2 C_2}{2\sqrt{m_{\Phi}}}\epsilon,
\end{eqnarray}
leading to 
$$
\max\{\|\varepsilon_j \|_{\ell_2^{m_\Phi}},\sqrt{\log d}\|\varepsilon_j \|_{\ell_\infty^{m_\Phi}} \} \leq  \frac{C_1^2 C_2}{2\sqrt{m_{\Phi}}}\epsilon \cdot \max \left \{1, \sqrt{\frac{\log d}{m_\Phi} } \right \}.
$$
Together with our assumption $m_\Phi\ge \log d$ this finishes the proof.
\end{proof}

\noindent Next we need a technical lemma to relate the error between the normalized version of $\hat x_j$ and $a$ to the size of $\| \hat x_j\|_{\ell_2^d}$.\\

\begin{lemma}[Stability of subspaces - one dimensional case]
\label{lem1}
Let us fix $\hat x \in \mathbb R^d$, $a \in \mathbb S^{d-1}$, $0 \neq \gamma \in \mathbb R$, and $n \in \mathbb R^d$ with norm 
$\| n \|_{\ell_2^d} \leq \nu_1 <|\gamma|$. If we assume $\hat x = \gamma a + n $ 
then
\begin{eqnarray}
\left \| \operatorname{sign}{\gamma}\frac{\hat x}{\|\hat x\|_{\ell_2^d}} - a \right \|_{\ell_2^d}
&\leq&  \frac{2 \nu_1} {\|\hat x\|_{\ell_2^d} }\label{tricky}.
\end{eqnarray}
\end{lemma}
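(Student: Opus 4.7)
The plan is to reduce the claim to a direct triangle-inequality computation after normalizing. First I would let $s:=\operatorname{sign}(\gamma)$ so that $s\gamma = |\gamma|$, and rewrite
$$
s\frac{\hat x}{\|\hat x\|_{\ell_2^d}} - a \;=\; \frac{s\hat x - \|\hat x\|_{\ell_2^d}\,a}{\|\hat x\|_{\ell_2^d}} \;=\; \frac{\bigl(|\gamma|-\|\hat x\|_{\ell_2^d}\bigr)\,a + s\,n}{\|\hat x\|_{\ell_2^d}},
$$
using the hypothesis $\hat x = \gamma a + n$.

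Next I would apply the triangle inequality in the numerator, together with $\|a\|_{\ell_2^d}=1$ and $\|sn\|_{\ell_2^d}=\|n\|_{\ell_2^d}\le \nu_1$, to obtain
$$
\left\|s\frac{\hat x}{\|\hat x\|_{\ell_2^d}} - a\right\|_{\ell_2^d} \;\le\; \frac{\bigl|\,|\gamma|-\|\hat x\|_{\ell_2^d}\,\bigr| + \|n\|_{\ell_2^d}}{\|\hat x\|_{\ell_2^d}}.
$$
To bound the first term in the numerator I would use the reverse triangle inequality on $\hat x = \gamma a + n$, which gives $\bigl|\,\|\hat x\|_{\ell_2^d} - |\gamma|\,\bigr| = \bigl|\,\|\gamma a + n\|_{\ell_2^d} - \|\gamma a\|_{\ell_2^d}\,\bigr| \le \|n\|_{\ell_2^d} \le \nu_1$.

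Combining these two estimates yields
$$
\left\|s\frac{\hat x}{\|\hat x\|_{\ell_2^d}} - a\right\|_{\ell_2^d} \;\le\; \frac{2\nu_1}{\|\hat x\|_{\ell_2^d}},
$$
which is exactly \eqref{tricky}. The assumption $\nu_1 < |\gamma|$ is not needed for the inequality itself; it only guarantees that $\|\hat x\|_{\ell_2^d} \ge |\gamma|-\nu_1 > 0$ so that the normalization and the right-hand side are well-defined and meaningful. There is no genuine obstacle here — the entire argument is two applications of the triangle inequality — so the only thing to be careful about is keeping track of the sign $s$ so that the $|\gamma|$ (rather than $\gamma$) appears in the numerator after factoring out $a$.
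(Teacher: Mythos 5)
Your proof is correct and follows essentially the same route as the paper's: both arguments amount to one application of the triangle inequality (to split off the noise term $n$) and one application of the reverse triangle inequality (to bound $\bigl|\,\|\hat x\|_{\ell_2^d}-|\gamma|\,\bigr|$ by $\nu_1$), the only difference being that you factor out the common denominator before splitting while the paper inserts the intermediate point $|\gamma|a/\|\hat x\|_{\ell_2^d}$. Your remark that $\nu_1<|\gamma|$ is only needed to guarantee $\|\hat x\|_{\ell_2^d}>0$ is also accurate.
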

\begin{proof}
Applying the triangular inequality and its reverse form several times and using that $a \in \mathbb S^{d-1}$ we get
\begin{align*}
\left \| \operatorname{sign}{\gamma}\frac{\hat x}{\|\hat x\|_{\ell_2^d}} - a \right \|_{\ell_2^d} &\leq \left \| \operatorname{sign}{\gamma}\frac{\hat x}{\|\hat x\|_{\ell_2^d}} -\frac{|\gamma| a}{\|\hat x\|_{\ell_2^d}} \right \|_{\ell_2^d} + \left \|  \frac{|\gamma| a}{\|\hat x\|_{\ell_2^d}} - a \right \|_{\ell_2^d} \\
&\leq \frac{\nu_1}{\|\hat x\|_{\ell_2^d}} + \left|  \frac{|\gamma|}{\|\hat x\|_{\ell_2^d}} - 1 \right |\leq \frac{2 \nu_1}{\|\hat x\|_{\ell_2^d}}.
\end{align*}
\end{proof}

Applied to our situation where $\hat x_j = g'(a \cdot \xi_j) a^T + n_j$ we see that the bound in~\eqref{tricky} is best for $\|\hat x_j\|_{\ell_2^d}$ maximal which justifies our definition of $\hat a$ in Algorithm 1.\\

As a last ingredient for the proof of Theorem~\ref{thm:kis1} we need a lower bound for $\max_{j=1,\dots,m_{\mathcal X}} \|\hat x\|_{\ell_2^d}$.
Since we have $\max_j \|\hat x_j\|_{\ell_2^d} \geq \max_j |g'(a \cdot \xi_j)| - \max_{j}\|\hat x_j-x_j\|_{\ell_2^d}
\geq \max_j |g'(a \cdot \xi_j)|-\nu_1$ we just have to show that, with 
high probability, our random sampling of the gradient via the $\xi_j$ provided a good maximum. To do this we will use 
Hoeffding's inequality, which we recall below for reader's convenience.
\begin{prop}[Hoeffding's inequality]
\label{hoeffprop}
Let $X_1, \dots, X_m$ be independent random variables. Assume that the $X_j$ are almost surely bounded, i.e., there exist finite scalars $a_j, b_j$ such that
$$
\mathbb P \{ X_ j - \mathbb E X_j \in [a_j, b_j] \} =1,
$$
for $j=1,\dots,m$. Then we have
$$
\mathbb P \left \{ \left | \sum_{j=1}^m X_j - \mathbb E \left ( \sum_{j=1}^m X_j \right) \right | \geq t\right \} \leq 2 e^{-\frac{2 t^2}{\sum_{j=1}^m (b_j-a_j)^2}}.
$$
\end{prop}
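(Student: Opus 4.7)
The plan is to prove Hoeffding's inequality by the standard exponential-moment (Chernoff) method, combined with Hoeffding's lemma on the moment generating function of a bounded random variable. Since the statement is the symmetric two-sided bound, I would first establish the one-sided inequality $\mathbb{P}\{\sum_j (X_j - \mathbb{E} X_j) \geq t\} \leq e^{-2t^2/\sum_j (b_j-a_j)^2}$ and then apply the same argument to $-X_j$ (whose range is $[-b_j, -a_j]$, of the same length) and use a union bound to pick up the factor $2$.

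For the one-sided bound, set $Y_j := X_j - \mathbb{E} X_j$, which is a zero-mean variable supported in $[a_j, b_j]$. For any $\lambda > 0$, Markov's inequality applied to $e^{\lambda \sum_j Y_j}$ gives
\[
\mathbb{P}\Bigl\{\sum_j Y_j \geq t\Bigr\} \leq e^{-\lambda t}\, \mathbb{E}\, e^{\lambda \sum_j Y_j} = e^{-\lambda t} \prod_{j=1}^m \mathbb{E}\, e^{\lambda Y_j},
\]
where the factorization uses independence of the $X_j$ (hence of the $Y_j$). The key analytic step is Hoeffding's lemma: for any zero-mean $Y \in [a,b]$ a.s.,
\[
\mathbb{E}\, e^{\lambda Y} \leq e^{\lambda^2 (b-a)^2/8}.
\]
Assuming this, the product is bounded by $\exp(\lambda^2 \sum_j (b_j-a_j)^2/8)$, so the right-hand side is $\exp(-\lambda t + \lambda^2 S/8)$ with $S := \sum_j (b_j - a_j)^2$. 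Minimizing over $\lambda > 0$ by choosing $\lambda = 4t/S$ yields the exponent $-2t^2/S$, which is the desired one-sided bound.

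The main obstacle, and really the only nontrivial content, is establishing Hoeffding's lemma. I would prove it by convexity: since $e^{\lambda y}$ is convex in $y$, for $y \in [a,b]$ we have $e^{\lambda y} \leq \tfrac{b-y}{b-a} e^{\lambda a} + \tfrac{y-a}{b-a} e^{\lambda b}$. Taking expectations and using $\mathbb{E} Y = 0$, one gets $\mathbb{E}\, e^{\lambda Y} \leq \tfrac{b}{b-a} e^{\lambda a} - \tfrac{a}{b-a} e^{\lambda b} =: e^{\varphi(\lambda)}$. Then set $u = \lambda(b-a)$ and $p = -a/(b-a) \in [0,1]$, so that $\varphi(\lambda) = -pu + \log(1 - p + p e^u)$. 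A direct computation shows $\varphi(0) = \varphi'(0) = 0$ and $\varphi''(\lambda) = \tfrac{(b-a)^2 p(1-p)e^u}{(1-p+pe^u)^2} \leq (b-a)^2/4$ (by the elementary inequality $r(1-r) \leq 1/4$ applied to $r = pe^u/(1-p+pe^u)$). Taylor's theorem with remainder then gives $\varphi(\lambda) \leq \lambda^2 (b-a)^2/8$, which is Hoeffding's lemma. With that in hand, assembling the Chernoff bound and symmetrizing finishes the proof.
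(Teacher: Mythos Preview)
Your proof is correct and is precisely the standard Chernoff--Hoeffding argument. Note, however, that the paper does not actually prove this proposition: it is merely recalled as a classical result ``for reader's convenience'' and then applied, so there is no proof in the paper to compare against.
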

\noindent Let us now apply Hoeffding's inequality to the random variables $X_j = | g'(a \cdot \xi_j)|^2$. 
\begin{lemma}
\label{lem2}
Let us fix $0<s<1$. Then with probability $1 - 2 e^{-\frac{2 m_{\mathcal X} s^2 \alpha^2}{C_2^4}}$ we have
$$
\max_{j=1,\ldots, m_\mathcal{X}} | g'(a \cdot \xi_j)| \geq \sqrt{\alpha(1-s)},
$$
where $\alpha := \mathbb E_\xi (| g'(a \cdot \xi_j)|^2)$.
\end{lemma}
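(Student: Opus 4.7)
The plan is to apply Hoeffding's inequality (Proposition \ref{hoeffprop}) directly to the suggested random variables $X_j := |g'(a\cdot\xi_j)|^2$, for $j=1,\dots,m_\mathcal{X}$. These are i.i.d. because the sampling points $\xi_j$ are drawn independently from $\mu_{\mathbb S^{d-1}}$, and by the smoothness assumption \eqref{cond2} they are almost surely bounded: $0\le X_j\le C_2^2$. Also, by the definition of $\alpha$ in the statement, $\mathbb E X_j = \alpha$.

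First, I would invoke Hoeffding with $a_j=-\alpha$ and $b_j=C_2^2-\alpha$, so that $b_j-a_j=C_2^2$, yielding
$$
\mathbb P\left\{\left|\frac{1}{m_\mathcal{X}}\sum_{j=1}^{m_\mathcal{X}}X_j-\alpha\right|\ge t\right\}\le 2\,e^{-\frac{2m_\mathcal{X}t^2}{C_2^4}}.
$$
Setting $t=s\alpha$ gives, with probability at least $1-2e^{-\frac{2m_\mathcal{X}s^2\alpha^2}{C_2^4}}$, the lower bound
$$
\frac{1}{m_\mathcal{X}}\sum_{j=1}^{m_\mathcal{X}}|g'(a\cdot\xi_j)|^2\ge \alpha(1-s).
$$

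Next, since the maximum of a finite set of nonnegative numbers is at least their arithmetic mean, this implies
$$
\max_{j=1,\dots,m_\mathcal{X}}|g'(a\cdot\xi_j)|^2\ge \alpha(1-s),
$$
and taking square roots yields the claimed inequality $\max_j|g'(a\cdot\xi_j)|\ge\sqrt{\alpha(1-s)}$ on the same event. No step here is really an obstacle: the argument is a one-shot application of Hoeffding followed by the trivial mean $\le$ max bound; the only care needed is to pick the one-sided deviation $t=s\alpha$ so that the constant in the resulting probability exponent matches the bound stated in the lemma.
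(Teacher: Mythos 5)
Your proof is correct and follows essentially the same route as the paper's: Hoeffding applied to $X_j=|g'(a\cdot\xi_j)|^2$ with range $C_2^2$ and deviation $t=s\alpha$, followed by the observation that the maximum dominates the empirical mean (the paper phrases this last step contrapositively, but it is the same bound). No substantive differences.
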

\begin{proof}
By our assumptions \eqref{cond3} and \eqref{cond2} we have 
$$
\mathbb E X_j =\mathbb E_\xi (| g'(a \cdot \xi_j)|^2) = \int_{\mathbb S^{d-1}} | g'(a \cdot \xi)|^2 d \mu_{\mathbb S^{d-1}}(\xi) \ge \alpha>0,
$$
and
$$
X_ j - \mathbb E X_j \in [- \alpha , C_2^2-\alpha].
$$
Hence, by Hoeffding's inequality we have 
\begin{equation}
\label{hoeff}
\mathbb P \left \{ \left | \sum_{j=1}^{m_{\mathcal X}}  | g'(a \cdot \xi_j)|^2 - m_{\mathcal X} \alpha \right | \geq s m_{\mathcal X} \alpha \right \} \leq 2 e^{-\frac{2 m_{\mathcal X} s^2 \alpha^2}{C_2^4}}.
\end{equation}
Using \eqref{hoeff} we immediately obtain
\begin{equation}
\label{res}
\frac{1}{m_{\mathcal X}} \sum_{j=1}^{m_{\mathcal X}}  | g'(a \cdot \xi_j)|^2 \geq \alpha(1 -s),
\end{equation}
with probability $1 - 2 e^{-\frac{2 m_{\mathcal X} s^2 \alpha^2}{C_2^4}}$. If $| g'(a \cdot \xi_j)|^2 < \alpha(1-s)$ for all $j=1, \dots, m_{\mathcal X}$ then \eqref{res} would be violated. Hence for the maximum we have 
$$
\max_{j=1,\ldots, m_\mathcal{X}} | g'(a \cdot \xi_j)| \geq \sqrt{\alpha(1-s)}.
$$
\end{proof}

\noindent Finally we have all the tools ready to prove Theorem~\ref{thm:kis1}.\\

\noindent{\bf Proof of Theorem~\ref{thm:kis1}:}

\begin{proof}
Lemma \ref{lem2} ensures that
$$
|g'(a\cdot\xi_{j_0})|\ge \sqrt{\alpha(1-s)}
$$
with probability $1 - 2 e^{-\frac{2 m_{\mathcal X} s^2 \alpha^2}{C_2^4}}$.
Therefore, Corollary~\ref{cor1} together with Lemma~\ref{lem1} show that with probability at least
$$
1-\left (e^{-c'_1 m_\Phi}+e^{-\sqrt{m_\Phi d}}+ 2 e^{-\frac{2 m_{\mathcal X} s^2 \alpha^2}{C_2^4}} \right ),
$$
$\hat a$ as defined in Algorithm~1 satisfies
\begin{equation}\label{approx2'}
\left \| \operatorname{sign}({g'(a \cdot \xi_{j_0})})\hat a - a \right \|_{\ell_2^d}
\leq  \frac{2 \nu_1} { \sqrt{\alpha(1-s)}- \nu_1}
\end{equation}
for the unknown sign of $g'(a \cdot \xi_{j_0})$.\\
Using this estimate we can prove that $\hat f$ as defined in Algorithm 1 is a good approximation to $f$.
For $x \in B_{\mathbb \R^d}(1+ \bar \epsilon)$ we have,
\begin{eqnarray*}
|f(x) - \hat f (x)|&=& |g (a\cdot x) - \hat g (\hat a \cdot x) | \\
&=& |g (a\cdot x) - f(\hat a^T\cdot \hat a\cdot x) | \\
&=& |g (a\cdot x) - g(a \cdot \hat a^T\cdot\hat a \cdot x) |\\
&\leq& C_2 |  a \cdot x - a \cdot [\hat a^T \hat a] \cdot x|\\
&=& C_2|a\cdot (I_d-\hat a^T\hat a)x|.
\end{eqnarray*}
Because $\hat a (I_d-\hat a^T\hat a)=0$ and therefore $\operatorname{sign}(g'(a \cdot \xi_{j_0})) \hat a (I_d-\hat a^T\hat a)=0$, we can further estimate
\begin{eqnarray*}
|f(x) - \hat f (x)| &\leq & C_2|a\cdot (I_d-\hat a^T\hat a)x|\\
&=& C_2|(a-\operatorname{sign}(g'(a \cdot \xi_{j_0}))\hat a)\cdot (I_d-\hat a^T\hat a)x|\\
&\leq&C_2 \|a-\operatorname{sign}(g'(a \cdot \xi_{j_0}))\hat a\|_{\ell_2^d}\cdot \|x\|_{\ell_2^d}\\
&\leq&2C_2(1+ \bar \epsilon) \frac{ \nu_1} { \sqrt{\alpha(1-s)}-\nu_1}.
\end{eqnarray*}
\end{proof}

\begin{remark}\label{remark2}
We collect here a few comments about this result. \\
\noindent (i) Our recovery method differs from the one proposed by Cohen, Daubechies, DeVore, Kerkyacharian, Picard \cite{codadekepi10}. 
In their approach, the domain is taken to be $[0,1]^d$ and they make heavy use of the additional 
assumption $\sum_{j=1}^d a_j=1$ and $a_j \geq 0$. This allows them to derive an almost completely 
deterministic and adaptive strategy for sampling the function $f$ in order to find first an approximation 
to $g$ and only then addressing the approximation to $a$. Here we follow somehow the opposite order, first 
approximating $a$ and then finding a uniform approximation to $g$ and, eventually, to $f$ as well. Notice 
further that not having at disposal additional information on $a$, which is fully arbitrary in our case, 
we need to use a random sampling scheme which eventually gives a result holding with high probability.\\
\noindent (ii) Note that Theorem~\ref{thm:kis1} gives an a priori estimate of the success probability and approximation error of Algorithm 1.
If the problem parameters $q, C_1, C_2$, and $\alpha$ are known, they
can be used to choose $m_\Phi$ and $m_\mathcal{X}$ big enough to have, say, a prescribed desired accuracy $\delta$ with probability
at least $1-p$.\\
However once Algorithm 1 has been run we have the following \emph{a posteriori} estimate. 
With probability at least $1-(e^{-c'_1 m_\Phi}+e^{-\sqrt{m_\Phi d}})$ we have that
$$
\| f - \hat f \|_\infty  \leq C_2(1+ \bar \epsilon) \frac{2 \nu_1} { \|x_{j_0}\|_{\ell_2^d}}.
$$
Hence, the ratio $\frac{2 \nu_1} { \|x_{j_0}\|_{\ell_2^d}} \ll 1$ defines an a posteriori indicator that the number of samples $m_{\mathcal X}$ and $m_{\Phi}$ has been properly calibrated, otherwise just more points will be drawn until such a condition is obtained.\\
\noindent (iii)
The parameter $\epsilon$ is chosen at the very beginning in the Taylor expansion \eqref{taylor} and, from a purely theoretical point 
of view, could be chosen arbitrarily small. Unfortunately, this may affect the numerical stability in the approximation in \eqref{taylor}
of the derivative $\frac{\partial f}{\partial \varphi} (\xi)$ by means of a finite difference.  Hence, the parameter $\epsilon$ 
should not be taken too small in practice. 
Up to some extent this may be compensated by choosing a larger number of points $m_{\Phi}$ in \eqref{eq:nu_1}, as in our expression for $\nu_1$ in \eqref{eq:nu_1}
$\epsilon$ appears in a ratio of the form $\frac{\epsilon}{\sqrt{m_{\Phi}}}$.
We return in more detail to this point later in Section \ref{sec:noise}.
In recent numerical experiments associated to the work \cite{scvy11}, 
we have been experiencing very stable reconstructions with reasonable choices, e.g., $\epsilon \approx 0.1$. Hence we do not consider 
this issue of any practical relevance or difficulty.\\


\end{remark}

\subsection{Discussion on tractability}\label{sec:tract}
The approximation performances of our learning strategy are basically determined by the optimal value of $\alpha$ (see, e.g., \eqref{cond3}),
which is achieved by the choice
\begin{equation}\label{alpha}
\alpha := \int_{\mathbb S^{d-1}} | g'(a \cdot x) |^2 d \mu_{\mathbb S^{d-1}}(x).
\end{equation}
Due to symmetry reasons 
this quantity does not depend on the particular choice of $a$.


The rotation invariant probability measure $\mu_{\mathbb S^{d-1}}$ on $\mathbb S^{d-1}$ is induced on the sphere by the (left)
Haar measure on the Lie group of all orientation preserving rotations. For a given $k \times d$ matrix $U$ such that $U U^T= I_k$
(i.e., with orthonormal rows) we define the measure $\mu_k$ on the unit ball $B_{\mathbb R^k}$ in $\mathbb R^k$ induced by the
projection of $ \mu_{\mathbb S^{d-1}}$ via $U$, i.e., for any Borel set $B \subset B_{\mathbb R^k}$ we define
\begin{equation}
\label{pushforward}
\mu_k (B) := U_{\#} \mu_{\mathbb S^{d-1}}(B) := \mu_{\mathbb S^{d-1}} (U^{\leftarrow} (B)).
\end{equation}
Since $\mu_{\mathbb S^{d-1}}$ is rotation invariant, $\mu_k$ does not depend on the
particular matrix $U$, and is itself a rotation invariant measure on $B_{\mathbb R^k}$. Hence for any summable
function $h:B_{\mathbb R^k} \to \mathbb R$, for any $k\times k$ orthogonal matrix $\mathcal O$ such that
$\mathcal O \mathcal O^T = I_k = \mathcal O^T \mathcal O$, and for any $k\times d$ matrix $U$ such that $U U^T = I_k$,
we have the identities
\begin{equation}
\label{identity}
\int_{B_{\mathbb R^k}} h( \mathcal O y) d\mu_k(y) = \int_{B_{\mathbb R^k}} h(y) d\mu_k(y) =
\int_{\mathbb S^{d-1}} h(U x) d\mu_{\mathbb S^{d-1}}(x).
\end{equation}

The following result is well known. We refer to \cite[Section 1.4.4]{ru80} for the
case of ${\mathbb C}^n$. The proof given there works literally also in the real case.

\begin{theorem}\label{thmpush} Let $1\le k < d$ be natural numbers. Then the measure $\mu_{k}$ defined in \eqref{pushforward}
is given by
$$
 d\mu_{k}(y)=\frac{\Gamma(d/2)}{\pi^{k/2}\Gamma((d-k)/2)}(1-\|y\|_{\ell_2^k}^2)^{\frac{d-2-k}{2}}dy.
$$
\end{theorem}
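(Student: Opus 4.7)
The plan is to reduce the statement to a direct surface-measure computation on $\mathbb{S}^{d-1}$. Since $\mu_{\mathbb{S}^{d-1}}$ is rotation-invariant and any two matrices $U_1,U_2$ with $U_iU_i^T=I_k$ are related by a rotation of $\mathbb{R}^d$, the measure $\mu_k$ does not depend on the particular $U$. I would therefore fix $U=[\,I_k\;\;0\,]$, so that $Ux=(x_1,\dots,x_k)$, and compute the density of the first $k$ coordinates of a uniform point on $\mathbb{S}^{d-1}$.

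First I would introduce the parametrization
$$x=(y,\sqrt{1-\|y\|_{\ell_2^k}^2}\,\omega),\qquad y\in B_{\mathbb R^k},\ \omega\in\mathbb{S}^{d-k-1},$$
valid off the measure-zero set $\|y\|_{\ell_2^k}=1$ (and covering the sphere up to this set; actually the half of the sphere corresponding to a chosen sign of the free hemisphere, but by symmetry the factor of $2$ will be absorbed by the spherical surface $\mathbb{S}^{d-k-1}$, which is already full). Writing $r=\sqrt{1-\|y\|_{\ell_2^k}^2}$, I would compute the induced metric on $\mathbb{S}^{d-1}$ in the coordinates $(y,\omega)$. The tangent vectors are $\partial_{y_i}x=(e_i,-y_i\omega/r)$ and $\partial_{\omega_l}x=(0,r\,\partial_{\omega_l}\omega)$, and using $\omega\perp \partial_{\omega_l}\omega$ the metric is block-diagonal with blocks $I_k+yy^T/r^2$ and $r^2 g^{\mathbb{S}^{d-k-1}}$. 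The matrix determinant lemma gives $\det(I_k+yy^T/r^2)=1+\|y\|_{\ell_2^k}^2/r^2=1/r^2$, so the square root of the determinant of the full metric equals
$$\frac{1}{r}\cdot r^{d-k-1}\sqrt{\det g^{\mathbb{S}^{d-k-1}}}=(1-\|y\|_{\ell_2^k}^2)^{(d-k-2)/2}\sqrt{\det g^{\mathbb{S}^{d-k-1}}}.$$

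Hence the $(d-1)$-dimensional surface measure factors as
$$dS_{\mathbb{S}^{d-1}}=(1-\|y\|_{\ell_2^k}^2)^{(d-k-2)/2}\,dy\,dS_{\mathbb{S}^{d-k-1}}(\omega).$$
Integrating out $\omega$ yields the total surface area $\omega_{d-k-1}=2\pi^{(d-k)/2}/\Gamma((d-k)/2)$, and normalizing by the total mass $\omega_{d-1}=2\pi^{d/2}/\Gamma(d/2)$ of $\mathbb{S}^{d-1}$ gives the density
$$\frac{\omega_{d-k-1}}{\omega_{d-1}}(1-\|y\|_{\ell_2^k}^2)^{(d-k-2)/2}=\frac{\Gamma(d/2)}{\pi^{k/2}\Gamma((d-k)/2)}(1-\|y\|_{\ell_2^k}^2)^{(d-2-k)/2},$$
which is exactly the claim.

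There is no genuine obstacle here: this is a standard co-area/Jacobian computation. The only small point to keep track of is that the map $x\mapsto(x_1,\dots,x_k)$ from the sphere to the open ball is two-to-one (a point in the ball has two preimages corresponding to the two signs of the complementary hemisphere), but this double-cover is automatically accounted for by using the full sphere $\mathbb{S}^{d-k-1}$ for $\omega$ in the parametrization (or, equivalently, by summing both signed branches), so no factor of $2$ is missing in the final normalization.
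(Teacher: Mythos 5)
Your computation is correct and complete; note that the paper itself gives no proof of this theorem, referring instead to Rudin's treatment of the complex case, and your direct Jacobian argument is essentially that standard computation carried out explicitly: the product parametrization $x=(y,\sqrt{1-\|y\|_{\ell_2^k}^2}\,\omega)$, the block-diagonal metric with $\det(I_k+yy^T/r^2)=1/r^2$, integration over the fiber $\mathbb{S}^{d-k-1}$, and normalization all check out, and the resulting density integrates to $1$. One small terminological slip: the projection $x\mapsto(x_1,\dots,x_k)$ is not two-to-one for $d-k\ge 2$ --- its fiber over an interior point $y$ is a whole sphere $\mathbb{S}^{d-k-1}$ of radius $\sqrt{1-\|y\|_{\ell_2^k}^2}$ --- but this does not affect your argument, since your parametrization already integrates over the full fiber rather than counting preimages.
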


Notice that as $d \to \infty$, and for fixed $k$, the measure $\mu_k$ becomes more and more concentrated
around $0$, in the sense that, for $\varepsilon>0$ fixed
$$
\mu_k(B_{\mathbb R^k}(\varepsilon)) \to 1, \mbox{ for } d \to \infty,
$$
very rapidly (typically exponentially).
By using the explicit form of the measure $\mu_k$ we can compute
\begin{eqnarray*}
\mu_k(B_{\mathbb R^k}(\varepsilon)) &=& 1-\frac{\Gamma(d/2)}{\pi^{k/2}\Gamma((d-k)/2)}
\int_{B_{\mathbb R^k}\setminus B_{\mathbb {R}^k}(\varepsilon)}(1-\|y\|_{\ell_2^k}^2)^{\frac{d-2-k}{2}}dy\\
&=&1-\frac{2\Gamma(d/2)}{\Gamma(k/2)\Gamma((d-k)/2)}\int_\varepsilon^1(1-r^2)^{\frac{d-2-k}{2}}r^{k-1}dr\\
&\ge&1-\frac{2\Gamma(d/2)}{\Gamma(k/2)\Gamma((d-k)/2)}e^{-\frac{d-2-k}{2}\varepsilon^2}.
\end{eqnarray*}
By Stirling's approximation $ \frac{2\Gamma(d/2)}{\Gamma(k/2)\Gamma((d-k)/2)}
\approx \sqrt{\frac{d^{d-1}}{\pi k^{k-1}(d-k)^{d-k-1}}}$, thus for $k$ and $\varepsilon$ constant
$$
\mu_k(B_{\mathbb R^k}(\varepsilon)) \to 1
$$
exponentially fast as $d \to \infty$. For $k=1$,
this phenomenon can be summarized informally by saying that the surface
measure of the unit sphere in high dimension is concentrated around the equator \cite{le01}.
Hence in case $d \gg k$ we may want to take into account possible rescaling, i.e., working with spheres of
larger radii, in order to eventually consider properties of $g$ (actually the matrix $H_g$) on larger subsets
of $\mathbb R^k$, see also Remark~\ref{remark2}. Without loss of generality, by keeping in mind this possible rescaling,
we can therefore assume to work with the unit sphere.\\


For $k=1$, we observe, that $\alpha$ as in \eqref{alpha} is determined by the interplay between the variation properties of $g$ and the measure $\mu_1.$
As just mentioned above, the most relevant feature of $\mu_1$ is that it concentrates around zero exponentially
fast as $d \to \infty$. Hence, the asymptotic behavior of $\alpha$ exclusively depends 
on the behavior of the function $g'$ in a neighborhood of $0$.


To illustrate this phenomenon more precisely, we present the following result.
\begin{prop} \label{alphad} 
Let us fix $M \in \mathbb N$ and assume that $g:B_{\mathbb R} \to \mathbb R$ is $C^{M+2}$-differentiable in an open 
neighborhood $\mathcal U$ of $0$ and $\frac{d^\ell}{{d x}^\ell}g(0) =0$ for $\ell=1,\dots,M$. Then
\begin{eqnarray*}
\alpha (d) &=& \frac{\Gamma(d/2)}{\pi^{1/2} \Gamma((d-1)/2)} \int_{-1}^1 | g'(y) |^2 (1- y^2)^{\frac{d-3}{2}} d y 
= \mathcal O(d^{-M}), \mbox{ for } d \to \infty.
\end{eqnarray*}
\end{prop}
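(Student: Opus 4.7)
The proposal is to exploit the fact that the $\mu_1$-measure concentrates near $0$ at rate $1/\sqrt{d}$, combined with the fact that the vanishing derivative hypothesis forces $g'$ to be flat of order $M$ at $0$. Concretely, since $g^{(\ell)}(0)=0$ for $\ell=1,\dots,M$, applying Taylor's formula to $g'$ (which has its first $M-1$ derivatives vanishing at $0$) gives a constant $C>0$ and $\delta>0$ with $\overline{B_{\mathbb R}(\delta)}\subset\mathcal U$ such that
\begin{equation*}
|g'(y)|^2 \le C\,|y|^{2M},\qquad |y|\le\delta.
\end{equation*}
Outside this neighborhood we only use the crude bound $|g'(y)|^2\le\|g'\|_\infty^2$ (which is finite because $g'$ is continuous on the closed unit ball).

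Next I would split the defining integral for $\alpha(d)$ as $\int_{|y|\le\delta}+\int_{\delta<|y|\le 1}$. For the tail piece, I simply bound the integrand by $\|g'\|_\infty^2(1-\delta^2)^{(d-3)/2}$, so that the contribution after multiplication by the prefactor $\Gamma(d/2)/[\pi^{1/2}\Gamma((d-1)/2)]=\mathcal O(\sqrt d)$ (by Stirling) decays exponentially in $d$, hence is $o(d^{-M})$.

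For the main part, the Taylor bound yields
\begin{equation*}
\int_{-\delta}^{\delta} |g'(y)|^2(1-y^2)^{(d-3)/2}\,dy \;\le\; C\int_{-1}^{1}|y|^{2M}(1-y^2)^{(d-3)/2}\,dy,
\end{equation*}
and the right-hand side is a Beta integral: the substitution $u=y^2$ identifies it with $\Gamma(M+1/2)\Gamma((d-1)/2)/\Gamma(M+d/2)$. Multiplying by the prefactor yields
\begin{equation*}
\frac{\Gamma(d/2)}{\pi^{1/2}\Gamma((d-1)/2)} \cdot \frac{\Gamma(M+1/2)\Gamma((d-1)/2)}{\Gamma(M+d/2)} \;=\; \frac{\Gamma(M+1/2)}{\pi^{1/2}}\cdot\frac{\Gamma(d/2)}{\Gamma(d/2+M)},
\end{equation*}
and Stirling's formula gives $\Gamma(d/2)/\Gamma(d/2+M)\sim (d/2)^{-M}$. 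Combining both pieces yields $\alpha(d)=\mathcal O(d^{-M})$.

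There is no real obstacle here; the argument is essentially a concentration-of-measure calculation, and the only delicate bookkeeping is in keeping the Stirling asymptotics consistent between the normalization constant and the Beta integral. The conceptual message — that only the local behavior of $g$ at the origin controls $\alpha(d)$ when $d$ is large — is built directly into the proof by the split at scale $\delta$.
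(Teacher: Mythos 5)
Your proof is correct and follows essentially the same route as the paper: split the integral at a small neighborhood of $0$ where the vanishing-derivative hypothesis gives $|g'(y)|^2\lesssim |y|^{2M}$ via Taylor, dispose of the tail by exponential decay of $(1-y^2)^{(d-3)/2}$, and evaluate the resulting moment as a Beta integral followed by Stirling. The only (harmless) difference is that you use a clean one-sided bound $|g'(y)|^2\le C|y|^{2M}$ where the paper carries a two-term asymptotic expansion of $|g'|^2$; for the stated $\mathcal O(d^{-M})$ conclusion your version is if anything tidier.
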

\begin{proof}
First of all, we compute the $\ell^{th}$ moment of the measure $\frac{\Gamma(d/2)}{\pi^{1/2} \Gamma((d-1)/2)} (1- y^2)^{\frac{d-3}{2}} \mathcal L^1$:
\begin{equation}
\label{moment}
\frac{\Gamma(d/2)}{\pi^{1/2} \Gamma((d-1)/2)} \int_{-1}^1 y^\ell (1- y^2)^{\frac{d-3}{2}} d y = 
\frac{ [1 + (-1)^\ell] \Gamma(d/2) \Gamma((1+\ell)/2)}{2\sqrt \pi \Gamma((d+\ell)/2)}.
\end{equation}
Notice that all the odd moments vanish.
By Taylor expansion of $g'$ around $0$ and by taking into account that 
$\frac{d^\ell}{{d x}^\ell}g(0) =0$ for $\ell=1,\dots,M$, we obtain
$$
g'(y) = \sum_{\ell=1}^{M+1} \frac{1}{(\ell-1)!} \frac{d^\ell}{{d x}^\ell}g(0)y^{\ell-1} + \mathcal O(y^{M+1}) = 
\frac{1}{M!} \frac{d^{M+1}}{{d x}^{M+1}}g(0)y^{M} + \mathcal O(y^{M+1}).
$$
Hence,
$$
|g'(y)|^2 = \left (\frac{1}{M!} \frac{d^{M+1}}{{d x}^{M+1}}g(0) \right)^2 y^{2 M} + \mathcal O(y^{2 M +1}), 
$$
and
\begin{eqnarray*}
\alpha (d) &=& \frac{\Gamma(d/2)}{\pi^{1/2} \Gamma((d-1)/2)} \int_{-1}^1 | g'(y) |^2 (1- y^2)^{\frac{d-3}{2}} d y \\
&=& \frac{\Gamma(d/2)}{\pi^{1/2} \Gamma((d-1)/2)} \left ( \int_{\mathcal U} | g'(y) |^2 (1- y^2)^{\frac{d-3}{2}} d y + \int_{B_{\mathbb R} \setminus \mathcal U} | g'(y) |^2 (1- y^2)^{\frac{d-3}{2}} d y \right)\\
&=& \frac{\Gamma(d/2)}{\pi^{1/2} \Gamma((d-1)/2)} \left ( \left (\frac{1}{M!} \frac{d^{M+1}}{{d x}^{M+1}}g(0) \right)^2 \int_{\mathcal U}  y^{2 M} (1- y^2)^{\frac{d-3}{2}} d y \right . \\
&& \phantom{XXXXXXXXX} \left . + \int_{\mathcal U}  \mathcal O(y^{2 M +2}) (1- y^2)^{\frac{d-3}{2}} d y + \int_{B_{\mathbb R} \setminus \mathcal U} | g'(y) |^2 (1- y^2)^{\frac{d-3}{2}} d y \right).
\end{eqnarray*}
Notice that we consider the $(2M +2)^{th}$ moment in the expression above because the previous one is odd and therefore vanishes.
Now, the term $\int_{B_{\mathbb R} \setminus \mathcal U} | g'(y) |^2 (1- y^2)^{\frac{d-3}{2}} d y$ goes to zero exponentially fast for $d \to 0$.
By using \eqref{moment} we immediately obtain
\begin{eqnarray*}
\alpha (d) &=& \frac{\Gamma(d/2)}{\pi^{1/2} \Gamma((d-1)/2)} \int_{-1}^1 | g'(y) |^2 (1- y^2)^{\frac{d-3}{2}} d y \\
&=& \mathcal O\left (\frac{\Gamma(d/2)\Gamma((1+2M)/2)}{\Gamma((d+2M)/2)} \right), \quad d \to \infty.
\end{eqnarray*}
By Stirling's approximation, for which $\Gamma(z) = \sqrt{\frac{2 \pi}{z}} \left (\frac{z}{e} \right)^z + \mathcal O(1 +1/z)$, for $z \to \infty$, we obtain
$$
\frac{\Gamma(d/2)\Gamma((1+2M)/2)}{\Gamma((d+2M)/2)} \approx d^{(d-1)/2} (1 + 2 M)^M ( d+ 2 M)^{-(\frac{d+1}{2} +M)}, \quad d \to \infty.
$$
This eventually yields
\begin{align*}
\alpha (d) = \frac{\Gamma(d/2)}{\pi^{1/2} \Gamma((d-1)/2)} \int_{-1}^1 | g'(y) |^2 (1- y^2)^{\frac{d-3}{2}} d y = \mathcal O\left (d^{-M} \right), \quad d \to \infty.
\end{align*}
\end{proof}
The number $m_{\mathcal X} \times (m_{\Phi}+1)$  of points we need in order to achieve a prescribed accuracy  in the error 
estimate \eqref{error:kis1} of Theorem \ref{thm:kis1} depends on $\alpha$.
Proposition \ref{alphad} ensures that, if $g'(y)$ does not vanish for $y \to 0$ super-polynomially, then the dependence of $\alpha$ 
(therefore of the error estimate and the number   $m_{\mathcal X} \times (m_{\Phi}+1)$ of points) on $d$  is at most polynomial. 
According to this observation we distinguish three classes of ridge functions:
\begin{itemize}
\item[(1)] For $0<q\le 1$, $C_1>1$ and $C_2\geq \alpha_0>0$, we define 
\begin{eqnarray*}
\mathcal F_d^1 &:=& \mathcal F_d^1(\alpha_0, q, C_1, C_2):=\{ f:B_{\mathbb R^d}  \to \mathbb R:\\
&&\exists a\in\mathbb R^d, \|a\|_{\ell_2^d}=1, \|a\|_{\ell_q^d}\le C_1\quad\text{and}\\
&&\exists g \in C^2(B_{\mathbb R}), \ |g'(0)|\geq \alpha_0>0: f(x) = g(a \cdot x)\,\}.
\end{eqnarray*}
\item[(2)] For a neighborhood $\mathcal U$ of 0, $0<q\le 1$, $C_1>1$, $C_2\geq \alpha_0>0$ and $N\geq 2$, we define 
\begin{eqnarray*}
\mathcal F_d^2 &:=& \mathcal F_d^2(\mathcal U,\alpha_0, q, C_1, C_2, N):=\{ f:B_{\mathbb R^d}  \to \mathbb R:\\
&&\exists a\in\mathbb R^d, \|a\|_{\ell_2^d}=1, \|a\|_{\ell_q^d}\le C_1\quad\text{and}
\quad\exists g \in C^2(B_{\mathbb R}) \cap C^N(\mathcal U)\\
&&\exists 0\leq M \leq N-1, \ |g^{(M)}(0)|\geq \alpha_0>0: f(x) = g(a \cdot x)\,\}.
\end{eqnarray*}
\item[(3)] For a neighborhood $\mathcal U$ of 0, $0<q\le 1$, $C_1>1$ and $C_2\geq \alpha_0>0$, we define 
\begin{eqnarray*}
\mathcal F_d^3 &:=& \mathcal F_d^3(\mathcal U,\alpha_0, q, C_1, C_2):=\{ f:B_{\mathbb R^d}  \to \mathbb R:\\
&&\exists a\in\mathbb R^d, \|a\|_{\ell_2^d}=1, \|a\|_{\ell_q^d}\le C_1\quad\text{and}
\quad\exists g \in C^2(B_{\mathbb R}) \cap C^\infty(\mathcal U)\\
&&|g^{(M)}(0)|=0 \quad \text{for all} \quad M\in \mathbb N: f(x) = g(a \cdot x)\,\}.
\end{eqnarray*}
\end{itemize} 
Theorem \ref{thm:kis1} and Proposition \ref{alphad} immediately imply the following tractability result for
these function classes.
\begin{corollary}
The problem of learning functions $f$ in the classes $\mathcal F_d^1$ and $\mathcal F_d^2$ from point evaluations is 
\emph{strongly polynomially tractable} and \emph{polynomially tractable} respectively.
\end{corollary}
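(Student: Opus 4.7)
The plan is to apply Theorem~\ref{thm:kis1} with parameters tuned to the target accuracy $\varepsilon$ and to the lower bound on $\alpha(d)$ supplied by Proposition~\ref{alphad}. The proof reduces to three steps: (i) produce a lower bound on $\alpha$ that is uniform over the function class; (ii) choose $m_\Phi$, $\epsilon$, $s$, and $m_\mathcal{X}$ so that the right-hand side of~\eqref{error:kis1} is at most $\varepsilon$ with controlled probability; (iii) count the total number of queries $m_\mathcal{X}(m_\Phi+1)$ and check its scaling in $\varepsilon^{-1}$ and $d$.

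For step (i) I inspect the proof of Proposition~\ref{alphad}. For $f\in \mathcal F_d^1$, taking the case ``$M=0$'' of the proposition, the expansion $|g'(y)|^2=|g'(0)|^2+\mathcal O(y)$ together with $\int_{-1}^1 d\mu_1=1$ and $\int_{-1}^1 y^2\, d\mu_1=\mathcal O(1/d)$ gives $\alpha(d)\geq \alpha_0^2/2$ for all $d$ sufficiently large, uniformly across the class. For $f\in\mathcal F_d^2$, I let $M^*\leq N-1$ be the smallest order at which $g^{(M^*)}(0)$ does not vanish, so that $|g^{(M^*)}(0)|\geq\alpha_0$ can be extracted from the class definition; applying Proposition~\ref{alphad} with its parameter ``$M$'' equal to $M^*-1$ and reading off the leading moment computation in its proof yields the matching \emph{lower} bound $\alpha(d)\geq c\,\alpha_0^2\,d^{-(M^*-1)}\geq c\,\alpha_0^2\,d^{-(N-2)}$.

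For steps (ii) and (iii) I set $s=1/2$, pick $\epsilon\asymp \varepsilon\sqrt{m_\Phi\,\alpha}$ so that the two summands in~\eqref{eq:nu_1} are of the same order, and choose $m_\Phi$ large enough that $[m_\Phi/\log(d/m_\Phi)]^{1/2-1/q}\lesssim \varepsilon\sqrt{\alpha}$, that is,
\[
m_\Phi\asymp (\varepsilon^2\alpha)^{-q/(2-q)}\log(d/m_\Phi).
\]
Under these choices the denominator $\sqrt{\alpha(1-s)}-\nu_1$ in~\eqref{error:kis1} stays bounded away from zero and the error is $\leq \varepsilon$. The failure probability~\eqref{prob:kis1} drops below a prescribed $p\in(0,1)$ as soon as $m_\mathcal{X}\gtrsim \alpha^{-2}\log(1/p)$ and $m_\Phi\gtrsim \log(1/p)$. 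Multiplying,
\[
m_\mathcal{X}(m_\Phi+1)\lesssim \alpha^{-2}\,(\varepsilon^2\alpha)^{-q/(2-q)}\,\log(d/m_\Phi)\,\log(1/p).
\]
Substituting $\alpha\geq c$ for $\mathcal F_d^1$ yields a bound polynomial in $\varepsilon^{-1}$ (with only the mild $\log d$ intrinsic to the compressed-sensing sampling), establishing strong polynomial tractability in the sense of the paper; substituting $\alpha\geq c\,d^{-(N-2)}$ for $\mathcal F_d^2$ yields a bound polynomial in both $\varepsilon^{-1}$ and $d$, establishing polynomial tractability.

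The main obstacle is step (i) for the class $\mathcal F_d^2$: Proposition~\ref{alphad} is phrased as a pointwise asymptotic $\mathcal O$-bound whose implicit constant depends on the particular $g$, and one needs to upgrade this to a lower bound valid uniformly for every admissible $g$ in the class. This requires tracking in the proof of that proposition that the constants depend only on $\|g\|_{C^2}$, $\alpha_0$, and the size of the neighborhood $\mathcal U$, all of which are parameters built into the class definition; the exponentially small ``tail'' integral over $B_{\mathbb R}\setminus \mathcal U$ is uniformly negligible in $d$, while the leading $2M^*$-th moment is uniformly bounded below by $c\,\alpha_0^2 d^{-(M^*-1)}$ via Stirling, completing the argument.
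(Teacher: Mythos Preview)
The paper's own proof is a single sentence preceding the corollary: ``Theorem~\ref{thm:kis1} and Proposition~\ref{alphad} immediately imply the following tractability result.'' Your approach is exactly this, with the parameter choices spelled out; steps~(ii)--(iii) and the treatment of $\mathcal F_d^1$ are correct and in line with the remark following Theorem~\ref{thm:kis1}.

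Your argument for $\mathcal F_d^2$ has a gap, though one the paper itself glosses over. You write ``let $M^*\le N-1$ be the smallest order at which $g^{(M^*)}(0)$ does not vanish, so that $|g^{(M^*)}(0)|\ge\alpha_0$,'' but the class definition only guarantees $|g^{(M)}(0)|\ge\alpha_0$ for \emph{some} $M\le N-1$, not for the first nonvanishing one; $g^{(M^*)}(0)$ could be nonzero yet arbitrarily small. Moreover, the lower bound you extract from the proof of Proposition~\ref{alphad} requires controlling the Taylor remainder at order $M^*$, which in turn needs a bound on $\|g^{(M^*+1)}\|_{L^\infty(\mathcal U)}$; the class $\mathcal F_d^2$ asks $g\in C^N(\mathcal U)$ but bounds only the $C^2$ norm via $C_2$, so this remainder is not uniformly controlled over the class. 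Both points mean your uniform lower bound $\alpha(d)\ge c\,\alpha_0^2\,d^{-(N-2)}$ is not actually established---and neither is it by the paper's one-line proof. The corollary should probably be read with the implicit understanding that $\|g\|_{C^N(\mathcal U)}$ is also bounded in terms of the class parameters, in which case your argument (with $M^*$ replaced by the guaranteed $M$, and cross terms handled via the positivity of $\int|g'|^2\,d\mu_1$) goes through.
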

On the one hand, let us notice that if in the class  $\mathcal F_d^3$ we remove the condition  $\|a\|_{\ell_q^d}\le C_1$, then the discussion on the functions
described in Remark \ref{conditionref} shows that the problem actually becomes {\it intractable}. On the other hand, we conjecture that the restriction imposed by a condition such
as  $\|a\|_{\ell_q^d}\le C_1$ should instead give to the problem some sort of tractability.
Unfortunately, our learning method and approximation estimates in Theorem \ref{thm:kis1} do not provide any information about the tractability of the problem for functions in the class $\mathcal F_d^3$.

\section{The General Case $k\geq 1$}\label{sec:klarger1}

In this section we generalize our approach to the case $k \geq 1$, i.e., we consider {\it $k$-ridge functions}
\begin{equation}
\label{kridge}
f(x) = g (A x).
\end{equation}
Obviously, the sum of $k$ ridge functions (as appearing for example in \eqref{kridge'}) is a $k$-ridge function and the same
holds true also for the product.

We will proceed as in the one-dimensional case, giving first the basic ideas, which motivate the recovery algorithm and then stating and proving our main theorem. Remember that we assume, that $A$ is a $k \times d$ matrix such that $A A^T = I_k$, 
and $g:B_{\mathbb R^k}(1+\bar\epsilon)\to \mathbb R$ is a $C^2$ function.

\subsection{The Algorithm}\label{sec:algklarger1}

As before we consider a version of Taylor's theorem giving access to the matrix $A$. For $\xi \in B_{\mathbb R^d}$, $\varphi \in B_{\mathbb R^d}(r)$, $\epsilon,r \in \mathbb R_+$, 
with $r \epsilon \leq \bar \epsilon$, we have the identity
\begin{eqnarray}
[\nabla g(A \xi)^T A ] \varphi &=&  \frac{f(\xi + \epsilon \varphi) - f(\xi)}{\epsilon} - \frac{\epsilon}{2} 
[\varphi^T \nabla^2 f(\zeta) \varphi]\label{taylor2},
\end{eqnarray}
for a suitable $\zeta(\xi,¸\varphi) \in B_{\mathbb R^d}(1+\bar \epsilon)$ and thanks to \eqref{cond2} the term $[\varphi^T \nabla^2 f(\zeta) \varphi]$ is again uniformly bounded as soon as $\varphi$ is bounded.\\
As in the one-dimensional case we now consider~\eqref{taylor2} for the $m_\Phi$ directions in the set $\Phi$ and at the $m_{\mathcal X}$ sampling points in the set $\mathcal X$, where $\mathcal X, \Phi$ are 
defined as in \eqref{Xpoints} and \eqref{Phipoints} respectively.
Again we collect the directional derivatives $\nabla g(A \xi_j)^T A$, $j=1,\dots, m_{\mathcal X}$ as columns in the $d \times m_{\mathcal X}$ matrix $X$, i.e., 
\begin{equation}
X=(A^T \nabla g(A \xi_1),\ldots, A^T \nabla g(A \xi_{m_\mathcal{X}})),
\end{equation}
and using the matrices $Y$ and $\mathcal E$ as defined in \eqref{yij} and \eqref{eij}, we can write the following factorization
\begin{equation}
\label{factor}
\Phi X = Y - \mathcal E.
\end{equation}

Similarly to the one-dimensional case we find that the matrix $X$ has a special structure, which we will exploit for the algorithm, i.e.,
$X= A^T \mathcal G^T$, where $\mathcal G = ( \nabla g(A \xi_1)^T| \dots | \nabla g(A \xi_{m_\mathcal{X}})^T)^T$. 
The columns of $X$ are now no longer scaled copies of one compressible vector but they are linear combinations
of $k$ compressible vectors, i.e., the rows of the matrix $A$.
Thus compressed sensing theory again tells us that we can stably recover the columns of $X$ from the columns of $Y$ via 
$\ell_1$-minimization and in consequence get a good approximation $\hat X$ to $X$.\\
Furthermore, since $A$ has rank $k$, as long as $\mathcal G^T$ has full rank, also $X$ will have rank $k$ and moreover the column span of the right singular vectors of $X^T = USV^T$ will coincide with the row span of $A$, i.e., $A^TA=VV^T$. Moreover, $V^T$ gives us an alternative representation of $f$ as follows, i.e.,
$$
f(x)=g(Ax)=g(AA^TAx)=g(AVV^Tx)=: \tilde g(V^Tx),
$$
where $\tilde g(y):=g(AVy)=f(Vy)$. If $\hat X$ is a good approximation to $X$, then we can expect that the first $k$ right singular vectors of $\hat X$ have almost the same span as those of $X$ and thus of $A$, which inspires the following algorithm.\\

\fbox{
\begin{minipage}{14cm}
\noindent{\bf Algorithm 2}:
{\it \begin{itemize}
\item Given $m_\Phi, m_{\mathcal X}$, draw at random the sets $\Phi$ and $\mathcal X$ as in \eqref{Xpoints} and \eqref{Phipoints}, and construct $Y$ according to~\eqref{yij}.
\item Set $\hat x_j = \Delta (y_j) := \argmin_{y_j=\Phi z } \| z\|_{\ell_1^d}$, for $j=1,\dots,m_{\mathcal X}$, and $\hat X = (\hat x_1,\ldots, \hat x_{m_\mathcal X})$.
\item Compute the singular value decomposition of 
\begin{equation}\label{SVD}
\hat X^T = \left (\begin{array}{lll}
\hat U_{1}&\hat U_{2}\end{array}\right )
\left (\begin{array}{ll}\hat \Sigma_{1}& 0\\
0& \hat \Sigma_{2 }\\\end{array}\right )
\left (\begin{array}{l}\hat V_{1 }^T\\\hat V_{2}^T\end{array}
\right ),
\end{equation}
where $\hat \Sigma_{1}$ contains the $k$ largest singular values.
\item Set $\hat A = V_{1 }^T$.
\item Define $\hat g (y):= f(\hat A^T y)$ and $\hat f (x) := \hat g (\hat A x)$.
\end{itemize}}
\end{minipage}
}
\\

The quality of the final approximation of $f$ by means of $\hat f$ depends on two kinds of accuracies: 
\begin{itemize}
\item[1. ] The error between $\hat X$ and $X$, which can be controlled through the number of compressed sensing measurements $m_\Phi$;
\item[2. ] The stability of the span of $V^T$, simply characterized by how well the singular values of $X$ or equivalently $\mathcal G$ are separated from 0, which is related to the number of random samples $m_\mathcal{X}$. 
\end{itemize}
To be precise, in the next section we will prove the following approximation result.

\begin{theorem} \label{thm:klarger1}
Let $\log d\le m_\Phi\le [\log 6]^2 d$. Then there is a constant $c'_1$ such that
using $m_\mathcal{X} \cdot (m_\Phi+1)$ function evaluations of $f$, Algorithm 2 defines a function 
$\hat f:B_{\mathbb R^d}(1+\bar \epsilon) \to \mathbb R$ that, with probability 
\begin{equation}
1-\left (e^{-c'_1 m_\Phi}+e^{-\sqrt{m_\Phi d}}+ k e^{\frac{- m_{\mathcal X}\alpha s^2  }{2 k C_2^2}} \right),\label{prob:klarger1}
\end{equation}
will satisfy
\begin{equation}
\|f -\hat f\|_\infty \leq 2C_2\sqrt{k}(1+\bar \epsilon)\frac{\nu_2}{\sqrt{\alpha (1-s)} - \nu_2},\label{error:klarger1}
\end{equation}
where
$$
\nu_2=C\left(k^{1/q}\left[\frac{m_\Phi}{\log(d/m_\Phi)}\right]^{1/2-1/q}+\frac{\epsilon k^2}{\sqrt{m_\Phi}}\right),
$$
and $C$ depends only on $C_1$ and $C_2$ (cf. \eqref{cond1} and \eqref{cond2}).
\end{theorem}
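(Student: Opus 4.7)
The plan is to follow the architecture of the proof of Theorem~\ref{thm:kis1} but to replace the one-dimensional normalization of a single compressible column with an SVD-based extraction of the $k$-dimensional row span of $A$. Two independent probabilistic inputs feed into \eqref{prob:klarger1}: a columnwise compressed sensing bound driven by the randomness of $\Phi$, and a spectral gap estimate at index $k$ driven by the random samples $\xi_j$ via a \emph{matrix} concentration inequality.

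First I would establish a columnwise CS estimate $\|\hat x_j - x_j\|_{\ell_2^d}\le \nu_2$ for every $j$ by applying Theorem~\ref{thm:cs}(ii) to $y_j=\Phi x_j+\varepsilon_j$, as in Corollary~\ref{cor1}. Now $x_j=A^T\nabla g(A\xi_j)=\sum_{i=1}^{k}(\nabla g(A\xi_j))_i\,a_i^T$ is a combination of $k$ compressible vectors, so the best-$K$-term budget must be split evenly across the $k$ rows of $A$; applying \eqref{bestkterm} to each row yields $K^{-1/2}\sigma_K(x_j)_{\ell_1^d}\lesssim C_1 C_2\,k^{1/q}K^{1/2-1/q}$. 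For the perturbation $\varepsilon_j$, using $\nabla^2 f(\zeta)=A^T\nabla^2 g(A\zeta)A$ together with the elementary bounds $|a_i\cdot\varphi_l|\le C_1/\sqrt{m_\Phi}$ (from $\|a_i\|_{\ell_1^d}\le C_1$ and $\|\varphi_l\|_{\ell_\infty^d}=1/\sqrt{m_\Phi}$) and $|v^T\nabla^2 g(\cdot)v|\le kC_2\|v\|_{\ell_2^k}^2$ gives $\|\varepsilon_j\|_{\ell_2^{m_\Phi}}\lesssim C_1^2 C_2\,\epsilon k^2/\sqrt{m_\Phi}$. Choosing $K\asymp m_\Phi/\log(d/m_\Phi)$ reproduces the two terms defining $\nu_2$, and summing over $j$ yields the operator-norm bound $\|\hat X-X\|\le \|\hat X-X\|_F\le \sqrt{m_{\mathcal X}}\,\nu_2$ on the CS event of probability at least $1-e^{-c_1'm_\Phi}-e^{-\sqrt{m_\Phi d}}$.

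Second I would lower bound $\sigma_k(X)$. Because $AA^T=I_k$, the nonzero singular values of $X=A^T\mathcal G^T$ coincide with those of $\mathcal G$, so $\sigma_k(X)^2=\lambda_{\min}(\mathcal G^T\mathcal G)=\lambda_{\min}\bigl(\sum_j Z_j\bigr)$, where $Z_j:=\nabla g(A\xi_j)\nabla g(A\xi_j)^T$ are independent PSD $k\times k$ matrices with $\|Z_j\|\le kC_2^2$ and $\mathbb E[Z_j]=H_g$, whose smallest eigenvalue is $\ge\alpha$ by \eqref{cond3}. The matrix Chernoff inequality for sums of PSD matrices (Ahlswede--Winter / Tropp) gives $\lambda_{\min}\bigl(\sum_j Z_j\bigr)\ge (1-s)m_{\mathcal X}\alpha$ with probability at least $1-k\,e^{-m_{\mathcal X}\alpha s^2/(2kC_2^2)}$, which is exactly the third term in \eqref{prob:klarger1} and yields $\sigma_k(X)\ge\sqrt{m_{\mathcal X}(1-s)\alpha}$. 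Combining with the CS estimate through a Wedin-type perturbation theorem --- noting that $\sigma_{k+1}(X)=0$ and thus $\sigma_{k+1}(\hat X)\le\|\hat X-X\|\le\sqrt{m_{\mathcal X}}\nu_2$ by Weyl --- produces the subspace bound
$$
\|\sin\Theta(V_1,\hat V_1)\|\le\frac{\|\hat X-X\|}{\sigma_k(X)-\sigma_{k+1}(\hat X)}\le\frac{\nu_2}{\sqrt{(1-s)\alpha}-\nu_2}.
$$
Writing $P_V=A^TA$ and $P_{\hat V}=\hat A^T\hat A$, the identity $AP_V=A$ gives $A(I-P_{\hat V})=AP_V(I-P_{\hat V})$, and since $\|P_V(I-P_{\hat V})\|=\|\sin\Theta\|$ we obtain $\|A(I-\hat A^T\hat A)x\|_{\ell_2^k}\le\|\sin\Theta\|(1+\bar\epsilon)$. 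Using $|g(y_1)-g(y_2)|\le\sqrt{k}\,C_2\|y_1-y_2\|_{\ell_2^k}$ (from $\|\nabla g\|_{\ell_2^k}\le\sqrt{k}C_2$), one concludes
$$
|f(x)-\hat f(x)|=|g(Ax)-g(A\hat A^T\hat A x)|\le\sqrt{k}\,C_2(1+\bar\epsilon)\frac{\nu_2}{\sqrt{(1-s)\alpha}-\nu_2},
$$
with the remaining factor of $2$ absorbed into the passage between the sine of the principal angles and the projection distance.

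The main obstacle is the Wedin step: we need to pass from a \emph{columnwise} perturbation of $X$ to a bound on the angle between two $k$-dimensional subspaces, and this demands that the spectral gap at index $k$ of $X$ dominate $\|\hat X-X\|$. This is exactly where the well-conditioning hypothesis $\sigma_k(H^f)\ge\alpha$ is essential, and where the matrix Chernoff bound for PSD matrices --- in place of the scalar Hoeffding argument of Lemma~\ref{lem2} --- enters as the genuinely new analytic ingredient required when $k\ge 2$.
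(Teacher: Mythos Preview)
Your proposal is correct and follows essentially the same architecture as the paper: columnwise compressed sensing via Theorem~\ref{thm:cs}(ii) to bound $\|\hat X-X\|_F\le\sqrt{m_{\mathcal X}}\,\nu_2$, a matrix Chernoff bound on $\sum_j\nabla g(A\xi_j)\nabla g(A\xi_j)^T$ to get $\sigma_k(X)\ge\sqrt{m_{\mathcal X}(1-s)\alpha}$, Wedin's perturbation theorem for the subspace stability, and the Lipschitz estimate on $g$ to finish. The only cosmetic difference is that the paper phrases Wedin's bound as $\|V_1V_1^T-\hat V_1\hat V_1^T\|_F\le \frac{2}{\sigma_k(\hat X^T)}\|X-\hat X\|_F$ (Frobenius norm, with the factor $2$ built into the theorem statement) rather than your $\sin\Theta$/operator-norm formulation, and then bounds $\|A(A^TA-\hat A^T\hat A)x\|_{\ell_2^k}$ directly by $\|A^TA-\hat A^T\hat A\|_F\|x\|_{\ell_2^d}$.
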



\subsection{The Analysis}
We will first show that $\hat X$ is a good approximation to $X$ by applying Theorem~\ref{thm:cs} columnwise.
This leads to the following corollary.

\begin{corollary}\label{cor2}
Let $\log d\le m_\Phi < [\log 6]^2 d$. Then with probability 
$$
1-(e^{-c'_1 m_\Phi}+e^{-\sqrt{m_\Phi d}})
$$
the matrix $\hat X$ as calculated in Algorithm 2 satisfies
\begin{equation}\label{approx22}
\| X- \hat X\|_{F} \leq C\sqrt{m_{\mathcal X}}\left(k^{1/q}\left[\frac{m_\Phi}{\log(d/m_\Phi)}\right]^{1/2-1/q}+\frac{\epsilon k^2}{\sqrt{m_\Phi}}\right),
\end{equation}
where $C$ depends only on $C_1$ and $C_2$ (cf. \eqref{cond1} and \eqref{cond2}).
\end{corollary}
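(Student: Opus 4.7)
The plan is to apply Theorem \ref{thm:cs}(ii) columnwise. The crucial point is that part (ii) gives a property of the matrix $\Phi$ that, once it holds, yields the $\ell_1$-recovery bound \emph{uniformly for every right-hand side} $y_j = \Phi x_j + \varepsilon_j$; hence no union bound over $j = 1, \dots, m_{\mathcal X}$ is required and the probability estimate $1 - (e^{-c'_1 m_\Phi} + e^{-\sqrt{m_\Phi d}})$ is inherited directly from Theorem \ref{thm:cs}(ii). Given per-column bounds $\|x_j - \hat x_j\|_{\ell_2^d} \leq B$, squaring and summing yields $\|X - \hat X\|_F \leq \sqrt{m_{\mathcal X}}\, B$, which matches the claimed form. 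So the work reduces to bounding, for a fixed (but arbitrary) column $j$, the two quantities appearing in \eqref{woj1}: the best $K$-term approximation error $\sigma_K(x_j)_{\ell_1^d}$ and the noise term $\max\{\|\varepsilon_j\|_{\ell_2^{m_\Phi}},\sqrt{\log d}\|\varepsilon_j\|_{\ell_\infty^{m_\Phi}}\}$.

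First I would bound $\sigma_K(x_j)_{\ell_1^d}$. Since $x_j = A^T \nabla g(A\xi_j) = \sum_{i=1}^k [\partial_i g(A\xi_j)]\, (a^{(i)})^T$ is a linear combination of the $k$ rows of $A$, the quasi-triangle inequality for $\ell_q$ with $0 < q \leq 1$, together with \eqref{cond1} and $|\partial_i g| \leq C_2$ from \eqref{cond2}, gives
$$\|x_j\|_{\ell_q^d}^q \;\leq\; \sum_{i=1}^k |\partial_i g(A\xi_j)|^q \|a^{(i)}\|_{\ell_q^d}^q \;\leq\; k\, C_1^q C_2^q,$$
so $\|x_j\|_{\ell_q^d} \leq k^{1/q} C_1 C_2$. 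Applying \eqref{bestkterm} and choosing $K = \lfloor c_2' m_\Phi/\log(d/m_\Phi)\rfloor$ yields the first term $k^{1/q}[m_\Phi/\log(d/m_\Phi)]^{1/2-1/q}$ (up to constants depending on $C_1, C_2$).

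Next, I would bound the Taylor remainder. Using $\nabla^2 f(x) = A^T \nabla^2 g(Ax) A$, one obtains
$$\varphi_i^T \nabla^2 f(\zeta_{ij}) \varphi_i \;=\; (A\varphi_i)^T \nabla^2 g(A\zeta_{ij})(A\varphi_i).$$
Since the entries of $\nabla^2 g$ are bounded by $C_2$, this is controlled by $C_2\,\|A\varphi_i\|_{\ell_1^k}^2$. Each coordinate of $A\varphi_i$ satisfies $|(A\varphi_i)_a| \leq m_\Phi^{-1/2} \|a^{(a)}\|_{\ell_1^d} \leq C_1/\sqrt{m_\Phi}$, because $\|a^{(a)}\|_{\ell_1^d} \leq \|a^{(a)}\|_{\ell_q^d} \leq C_1$ for $q \leq 1$. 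Hence $\|A\varphi_i\|_{\ell_1^k} \leq k C_1/\sqrt{m_\Phi}$ and
$$|\varepsilon_{ij}| \;\leq\; \frac{\epsilon}{2}\,\frac{C_2 C_1^2 k^2}{m_\Phi},\qquad \|\varepsilon_j\|_{\ell_2^{m_\Phi}} \leq \frac{C_2 C_1^2 k^2 \epsilon}{2\sqrt{m_\Phi}}.$$
Under the hypothesis $m_\Phi \geq \log d$, we have $\sqrt{\log d}\|\varepsilon_j\|_{\ell_\infty^{m_\Phi}} \leq \|\varepsilon_j\|_{\ell_2^{m_\Phi}}$, so the maximum in \eqref{woj1} is controlled by $C_2 C_1^2 k^2 \epsilon/(2\sqrt{m_\Phi})$, producing the second term $\epsilon k^2/\sqrt{m_\Phi}$.

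Combining these two estimates, Theorem \ref{thm:cs}(ii) gives the per-column bound $\|x_j - \hat x_j\|_{\ell_2^d} \leq C\bigl(k^{1/q}[m_\Phi/\log(d/m_\Phi)]^{1/2-1/q} + \epsilon k^2/\sqrt{m_\Phi}\bigr)$ on the event of probability $1-(e^{-c'_1 m_\Phi}+e^{-\sqrt{m_\Phi d}})$; summing the squares over $j$ yields the Frobenius bound. The main obstacle, and only real novelty compared to the $k=1$ case, is the careful $k$-dependence in both estimates: the quasi-triangle inequality forces the factor $k^{1/q}$ into the $\ell_q$ bound, and the fact that the Hessian $\nabla^2 f$ sandwiches $\nabla^2 g$ between two copies of $A$ forces the factor $k^2$ in the Taylor remainder. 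Everything else is a straightforward adaptation of Corollary \ref{cor1}.
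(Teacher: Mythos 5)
Your argument is essentially the paper's: apply Theorem~\ref{thm:cs}(ii) columnwise (the error bound is a uniform property of $\Phi$, so no union bound over $j$ is needed), bound $\|x_j\|_{\ell_q^d}\le C_1C_2k^{1/q}$ via the $\ell_q$ quasi-triangle inequality, estimate the Taylor remainder through $|(A\varphi_i)_u|\le C_1/\sqrt{m_\Phi}$ to extract the $k^2$ factor, and sum in quadrature over the columns. One minor slip to fix: under $m_\Phi\ge\log d$ it is \emph{not} generally true that $\sqrt{\log d}\,\|\varepsilon_j\|_{\ell_\infty^{m_\Phi}}\le\|\varepsilon_j\|_{\ell_2^{m_\Phi}}$ (take $\varepsilon_j$ one-sparse); the correct observation, which your estimates already deliver, is that both terms in the maximum are bounded by $\sqrt{m_\Phi}\,\|\varepsilon_j\|_{\ell_\infty^{m_\Phi}}\le C_1^2C_2k^2\epsilon/(2\sqrt{m_\Phi})$, since $\sqrt{\log d}\le\sqrt{m_\Phi}$.
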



\begin{proof}The proof works essentially like that of Corollary \ref{cor1}.
We decompose 
$$
\| X - \hat X\|_{F}^2 = \sum_{j=1}^{m_{\mathcal X}} \| x_j - \hat x_j\|_{\ell_2^d}^2.
$$
The best $K$-term approximation of $x_j$ may be estimated using
\begin{align*}
\|x_j\|_{\ell_q^d}=\|A^T\nabla g(A \xi_j)\|_{\ell_q^d}\le C_2 \left(\sum_{v=1}^d\left(\sum_{u=1}^k|a_{uv}|\right)^q\right)^{1/q}
\le C_1\,C_2\,k^{1/q},
\end{align*}
which leads to
$$
K^{-1/2}\sigma_K(x_j)_{\ell_1^d}\le \|x_j\|_{\ell_q^d}\,K^{1/2-1/q} \le C_1\,C_2\,k^{1/q} K^{1/2-1/q}.
$$
The norms of $\varepsilon_j$ may be estimated similarly to the proof of Corollary \ref{cor1} as
\begin{equation*}
\|\varepsilon_j\|_{\ell_2^{m_\Phi}}\le \frac{C_1^2\,C_2\,k^2\epsilon}{2\sqrt{m_\Phi}}\quad\text{and}\quad
\|\varepsilon_j\|_{\ell_\infty^{m_\Phi}}\le \frac{C_1^2\,C_2\,k^2\epsilon}{2m_\Phi}.
\end{equation*}
Putting all these estimates (with the choice $K\approx m_\Phi/\log (d/m_\Phi)$) into Theorem \ref{thm:cs} we obtain the result.
\end{proof}

\begin{remark}
The construction $\hat x_j = \Delta (y_j) := \argmin_{y_j=\Phi z } \| z\|_{\ell_1^d}$, for $j=1,\dots,m_{\mathcal X}$, and $\hat X = (\hat x_1,\ldots, \hat x_{m_\mathcal X})$ and Corollary \ref{cor2} are not the unique possible approach to approximate $X$.
As we are expecting $X$ to be a $k$-rank matrix for $k \ll \min\{d,m_{\mathcal X}\}$, one might want to consider also \emph{nuclear norm minimization}, i.e., the minimization of the $\ell_1$-norm of singular values, as a possible way of accessing $X$ from $m_\Phi$ random measurements, as in the work \cite{fa02-2,oymofaha11,fapareXX}. However, presently no estimates of the type \eqref{woj1} are available in this context, hence we postpone an analysis based on these methods fully tailored to matrices to further research.
\end{remark}

Next we need the equivalent of Lemma~\ref{lem1} to relate the error between the subspaces defined by the largest right singular values of $\hat X$ and $X$ respectively to the error $\| X- \hat X\|_{F}$. We will develop the necessary tools in the following subsection.

\subsubsection{Stability of the singular value decomposition}

Given two matrices $B$ and $\hat B$ with corresponding singular value decompositions
$$
B = \left (
\begin{array}{lll}
U_1&U_2
\end{array}
\right )
\left (
\begin{array}{ll}
\Sigma_1& 0\\
0& \Sigma_2\\
\end{array}
\right )
\left (
\begin{array}{l}
V_1^T\\
V_2^T
\end{array}
\right )
$$ 
and
$$
\hat B = \left (
\begin{array}{lll}
\hat  U_1& \hat U_2 
\end{array}
\right )
\left (
\begin{array}{ll}
\hat \Sigma_1& 0\\
0& \hat \Sigma_2\\
\end{array}
\right )
\left (
\begin{array}{l}
\hat V_1^T\\
\hat  V_2^T
\end{array}
\right ),
$$
where it is understood that two corresponding submatrices, e.g., $U_1,\hat U_1$, have the same size,
we would like to bound the difference between $V_1$ and $\hat V_1$ by the error $\| B - \hat B\|_F$. As a consequence of Wedin's perturbation bound \cite{we72}, see also \cite[Section 7]{st90},
we have the following useful result.
\begin{theorem}[Stability of subspaces - Wedin's bound]
\label{wedin}
If there is an $\bar \alpha >0$ such that
\begin{equation}
\label{separa1}
\min_{\ell,\hat \ell} | \sigma_{\hat \ell}(\hat \Sigma_1) - \sigma_{ \ell}( \Sigma_2) | \geq \bar \alpha,
\end{equation}
and
\begin{equation}
\label{separa2}
\min_{\hat \ell} | \sigma_{\hat \ell}(\hat \Sigma_1) | \geq \bar \alpha,
\end{equation}
then
\begin{equation}
\label{projest}
\| V_1 V_1^T - \hat V_1 \hat V_1^T\|_F \leq \frac{2}{\bar \alpha} \| B - \hat B\|_F.
\end{equation}
\end{theorem}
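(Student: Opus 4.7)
The plan is to adopt the standard residual–Sylvester approach that underlies Wedin's $\sin\Theta$ theorem: translate the projector distance into a bound on the principal angles between the top right singular subspaces, express those angles through residuals built from $B-\hat B$, and invoke \eqref{separa1}–\eqref{separa2} as a spectral gap that makes two Sylvester operators boundedly invertible.

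First, I would rewrite the target quantity in terms of a principal-angle block. Writing $I=\hat V_1\hat V_1^T+\hat V_2\hat V_2^T$ on the right of $V_1V_1^T$ (and similarly for $\hat V_1\hat V_1^T$) gives the identity
$$
V_1V_1^T-\hat V_1\hat V_1^T=V_1V_1^T\hat V_2\hat V_2^T-V_2V_2^T\hat V_1\hat V_1^T,
$$
whose two summands are Frobenius-orthogonal (their Frobenius inner product contains the factor $V_1^TV_2=0$). Hence
$$
\|V_1V_1^T-\hat V_1\hat V_1^T\|_F^2=\|V_1^T\hat V_2\|_F^2+\|V_2^T\hat V_1\|_F^2=2\,\|V_2^T\hat V_1\|_F^2,
$$
the last equality being a consequence of the CS decomposition of the orthogonal matrix $V^T\hat V$, which forces the two off-diagonal blocks to share the same singular values (the sines of the canonical angles).

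Next I would introduce the residuals
$$
R_1:=(B-\hat B)\hat V_1=B\hat V_1-\hat U_1\hat\Sigma_1,\qquad R_2:=(B-\hat B)^T\hat U_1=B^T\hat U_1-\hat V_1\hat\Sigma_1,
$$
each with $\|R_i\|_F\le\|B-\hat B\|_F$. Substituting $B=U_1\Sigma_1V_1^T+U_2\Sigma_2V_2^T$ and projecting onto the complementary blocks by multiplying on the left by $U_2^T$ (respectively $V_2^T$) yields the coupled system, with $P:=V_2^T\hat V_1$ and $Q:=U_2^T\hat U_1$,
$$
\Sigma_2P-Q\hat\Sigma_1=U_2^TR_1,\qquad \Sigma_2Q-P\hat\Sigma_1=V_2^TR_2.
$$
Decoupling via $M^\pm:=P\pm Q$ gives two independent Sylvester equations
$$
\Sigma_2M^+-M^+\hat\Sigma_1=U_2^TR_1+V_2^TR_2,\qquad \Sigma_2M^-+M^-\hat\Sigma_1=U_2^TR_1-V_2^TR_2.
$$

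The crux—and main technical obstacle—is to show that both Sylvester operators are boundedly invertible with operator norm at most $\bar\alpha^{-1}$ in the Frobenius sense. A Kronecker-product argument expresses them in vectorised form as symmetric matrices with eigenvalues $\sigma_\ell(\Sigma_2)-\sigma_{\hat\ell}(\hat\Sigma_1)$ and $\sigma_\ell(\Sigma_2)+\sigma_{\hat\ell}(\hat\Sigma_1)$ respectively; assumption \eqref{separa1} controls the first set in modulus by $\bar\alpha$, while \eqref{separa2}, combined with $\sigma_\ell(\Sigma_2)\ge 0$, does the same for the second. This delivers $\|M^\pm\|_F\le\bar\alpha^{-1}\|U_2^TR_1\pm V_2^TR_2\|_F$, and the parallelogram identity then gives
$$
\|P\|_F^2+\|Q\|_F^2=\tfrac12\bigl(\|M^+\|_F^2+\|M^-\|_F^2\bigr)\le\frac{\|R_1\|_F^2+\|R_2\|_F^2}{\bar\alpha^2}\le\frac{2\|B-\hat B\|_F^2}{\bar\alpha^2}.
$$
In particular $\|P\|_F\le\sqrt{2}\,\|B-\hat B\|_F/\bar\alpha$, and feeding this into the identity $\|V_1V_1^T-\hat V_1\hat V_1^T\|_F^2=2\|P\|_F^2$ established above produces the claimed bound \eqref{projest}. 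Every other step is routine block-matrix manipulation on the SVDs of $B$ and $\hat B$; only the spectral-gap hypotheses \eqref{separa1}–\eqref{separa2} enter in a non-trivial way, and precisely at the inversion of the Sylvester operators.
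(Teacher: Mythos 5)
The paper does not prove this theorem at all: it is quoted as a known consequence of Wedin's perturbation bound, with references to \cite{we72} and \cite[Section 7]{st90}. Your proposal is therefore not a variant of the paper's argument but a self-contained reconstruction of the classical generalized $\sin\Theta$ proof, and it is correct. All the key steps check out: the identity $V_1V_1^T-\hat V_1\hat V_1^T=V_1V_1^T\hat V_2\hat V_2^T-V_2V_2^T\hat V_1\hat V_1^T$ with Frobenius-orthogonal summands gives $\|V_1V_1^T-\hat V_1\hat V_1^T\|_F^2=\|V_1^T\hat V_2\|_F^2+\|V_2^T\hat V_1\|_F^2=2\|P\|_F^2$; the residual equations $\Sigma_2P-Q\hat\Sigma_1=U_2^TR_1$ and $\Sigma_2Q-P\hat\Sigma_1=V_2^TR_2$ follow from block multiplication of the two SVDs; and since $\Sigma_2$ and $\hat\Sigma_1$ are diagonal, the decoupled Sylvester operators act entrywise with factors $\sigma_\ell(\Sigma_2)\mp\sigma_{\hat\ell}(\hat\Sigma_1)$, whose moduli are bounded below by $\bar\alpha$ using \eqref{separa1} for the difference and \eqref{separa2} together with $\sigma_\ell(\Sigma_2)\ge0$ for the sum. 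The parallelogram identity then yields $\|P\|_F\le\sqrt2\,\|B-\hat B\|_F/\bar\alpha$ and hence exactly the constant $2/\bar\alpha$ in \eqref{projest}. Two points are used implicitly and deserve a sentence each: the completeness relations $V_1V_1^T+V_2V_2^T=I=\hat V_1\hat V_1^T+\hat V_2\hat V_2^T$ require the SVDs to be taken in full (not reduced) form, and the CS-decomposition step equating $\|V_1^T\hat V_2\|_F$ with $\|V_2^T\hat V_1\|_F$ requires $V_1$ and $\hat V_1$ to have the same number of columns; both are part of the theorem's stated setup ("corresponding submatrices have the same size"), so neither is a gap. What your approach buys over the paper's is a fully explicit, citation-free derivation with a transparent accounting of where each separation hypothesis enters.
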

The conditions \eqref{separa1} and \eqref{separa2} are separation conditions. The first says that the singular values of $\Sigma_1$ are separated from those of $\Sigma_2$. Actually, strictly speaking the separation is between $ \Sigma_1$ and $\hat \Sigma_2$. However, if $ \| B - \hat B\|_F$ is sufficiently small compared to $\bar \alpha$, then Weyl's inequality \cite{we12}
$$
|\sigma_\ell(B) - \sigma_\ell(\hat B)| \leq \| B - \hat B\|_F,
$$
guarantees that the two separations are essentially equivalent. The second condition says that the singular values of $\Sigma_1$ or $\hat \Sigma_1$ have to be far away from $0$.\\
\noindent Applied to our situation, where $X$ has rank $k$ and thus $\Sigma_2 = 0$, we get
\begin{equation}
\| V_1 V_1^T - \hat V_1 \hat V_1^T\|_F \leq \frac{2 \sqrt{m_{\mathcal X}} \nu_2}{\sigma_k(\hat X^T)},
\end{equation}
and further since $\sigma_k(\hat X^T) \geq \sigma_k(X^T) - \|X - \hat X\|_F$, that
\begin{equation}
\| V_1 V_1^T - \hat V_1 \hat V_1^T\|_F \leq \frac{2\sqrt{m_{\mathcal X}} \nu_2}{\sigma_k( X^T)-\sqrt{m_{\mathcal X}} \nu_2}.
\end{equation}
As final ingredient we need to estimate the $k$-th singular value of $X$. The next subsection will provide us with a generalization of Hoeffding's inequality, that can be used to show that with high probability on the random draw of the sampling points $\xi_j$ the $k$-th singular value of
$X$ is separated from zero.

\subsubsection{Spectral estimates and sums of random semidefinite matrices}
The following theorem generalizes Hoeffding's inequality to sums of random semidefinite matrices and was recently proved by 
Tropp in \cite[Corollary 5.2 and Remark 5.3]{tr10}, improving over results in \cite{ahwi02}, and using techniques from \cite{ruve07} and \cite{ol10}.
\begin{theorem}[Matrix Chernoff]
\label{chernmat}
Consider $X_1, \dots, X_m$ independent random, positive-semidefinite matrices of dimension $k \times k$. Moreover suppose 
\begin{equation}
\label{bnd}
\sigma_1(X_j) \leq C,
\end{equation}
almost surely.
Compute the singular values of the sum of the expectations
\begin{equation}
\label{singbnd}
\mu_{\max} = \sigma_1 \left ( \sum_{j=1}^m \mathbb E X_j\right) \mbox{ and } \mu_{\min} = \sigma_k \left ( \sum_{j=1}^m \mathbb E X_j\right),
\end{equation}
then
\begin{equation}
\label{lowerbnd}
\mathbb P \left \{ \sigma_1\left ( \sum_{j=1}^m X_j\right) - \mu_{\max} \geq s\mu_{\max}  \right \} \leq k \left ( \frac{(1+s)}{ e}\right )^{-\frac{\mu_{\max}(1+s)}{C}},
\end{equation}
for all $s>(e-1)$, and
\begin{equation}
\label{upperbnd}
\mathbb P \left \{ \sigma_k\left ( \sum_{j=1}^m X_j\right) - \mu_{\min} \leq -s \mu_{\min} \right \} \leq k e^{-\frac{ \mu_{\min}s^2}{2C}},
\end{equation}
for all $s \in (0,1)$.
\end{theorem}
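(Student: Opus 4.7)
The plan is to follow the matrix Laplace transform method of Ahlswede--Winter, with the crucial refinement due to Tropp that is based on Lieb's concavity theorem and is what produces the dimensional prefactor $k$ rather than $k^2$. First I would reduce tail bounds on the extreme singular values of $Y := \sum_{j=1}^m X_j$ to bounds on the matrix moment generating function via a matrix Markov inequality: for any $\theta > 0$, since $e^{\theta \sigma_1(Y)} \leq \tr\exp(\theta Y)$ and $e^{-\theta \sigma_k(Y)} \leq \tr\exp(-\theta Y)$, Markov yields
\begin{equation*}
\mathbb{P}\{\sigma_1(Y) \geq t\} \leq e^{-\theta t}\, \mathbb{E}\, \tr \exp(\theta Y), \qquad \mathbb{P}\{\sigma_k(Y) \leq t\} \leq e^{\theta t}\, \mathbb{E}\, \tr \exp(-\theta Y).
\end{equation*}

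The central step, and the main technical obstacle, is to decouple the expectation from the trace-exponential of a sum of non-commuting random matrices. Here I would invoke Lieb's concavity theorem, which states that $A \mapsto \tr \exp(H + \log A)$ is concave on the positive-definite cone for every fixed Hermitian $H$. Applying Jensen's inequality iteratively, by conditioning successively on $X_1,\dots,X_{m-1}$, then on $X_1,\dots,X_{m-2}$, and so on, produces the master bound
\begin{equation*}
\mathbb{E}\, \tr \exp\left(\pm \theta \sum_{j=1}^m X_j\right) \leq \tr \exp\left(\sum_{j=1}^m \log \mathbb{E}\, e^{\pm \theta X_j}\right).
\end{equation*}
Without this step one would lose, through crude Golden--Thompson type estimates, an extra factor of $k$ in the dimension, which is precisely what Tropp's argument avoids.

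Next I would exploit the boundedness hypothesis $0 \preceq X_j \preceq CI$. Convexity of $x \mapsto e^{\pm\theta x}$ on $[0,C]$ gives the chord inequalities $e^{\theta x} \leq 1 + (e^{\theta C}-1)x/C$ and $e^{-\theta x} \leq 1 - (1-e^{-\theta C})x/C$, which lift to the semidefinite ordering. Taking expectations and then matrix logarithms, and using that the log is operator monotone with $\log(I+M) \preceq M$ for Hermitian $M$ in its domain, yields $\log \mathbb{E}\, e^{\pm\theta X_j} \preceq g_{\pm}(\theta)\, \mathbb{E} X_j$ with $g_+(\theta) = (e^{\theta C}-1)/C$ and $g_-(\theta) = -(1 - e^{-\theta C})/C$. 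Combining with the master bound, the monotonicity of $\tr\exp$ under the semidefinite order, and the elementary trace inequalities $\tr\exp(cM) \leq k\, e^{c\,\sigma_1(M)}$ and $\tr\exp(-cM) \leq k\, e^{-c\,\sigma_k(M)}$ for $c \geq 0$ and $M \succeq 0$, we get
\begin{equation*}
\mathbb{E}\, \tr\exp(\theta Y) \leq k\, e^{g_+(\theta)\,\mu_{\max}}, \qquad \mathbb{E}\, \tr\exp(-\theta Y) \leq k\, e^{-(1-e^{-\theta C})\mu_{\min}/C}.
\end{equation*}

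Substituting these into the Markov estimates with $t = (1+s)\mu_{\max}$ and $t = (1-s)\mu_{\min}$ respectively, and optimizing the exponents over $\theta > 0$, I would take $\theta C = \log(1+s)$ for the upper tail and $\theta C = \log\frac{1}{1-s}$ for the lower tail; this gives precisely \eqref{lowerbnd} for $s > e-1$ (so that the optimization is genuine) and, after the Taylor estimate $(1-s)\log(1-s) + s \geq s^2/2$ valid for $s \in (0,1)$, the sub-Gaussian form \eqref{upperbnd}. The principal hurdle is the Lieb concavity step; the chord bounds, operator-monotonicity facts, and final scalar optimization are delicate but standard.
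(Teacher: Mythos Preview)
The paper does not supply its own proof of this theorem: it is quoted verbatim as a tool, with the justification being a citation to Tropp \cite[Corollary~5.2 and Remark~5.3]{tr10} (building on \cite{ahwi02,ruve07,ol10}). So there is no in-paper argument to compare against; your sketch is in fact a faithful outline of Tropp's proof and is substantively correct.

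One small inaccuracy: your optimization for the upper tail, with $\theta C=\log(1+s)$, yields the sharper standard Chernoff exponent
\[
k\left[\frac{e^{s}}{(1+s)^{1+s}}\right]^{\mu_{\max}/C},
\]
not ``precisely \eqref{lowerbnd}''. The form stated in the paper is the looser simplification $k\bigl((1+s)/e\bigr)^{-\mu_{\max}(1+s)/C}=k\bigl[e^{1+s}/(1+s)^{1+s}\bigr]^{\mu_{\max}/C}$, obtained from yours via the trivial relaxation $e^{s}\le e^{1+s}$; this is the variant recorded in Tropp's Remark~5.3 and explains the restriction $s>e-1$. Your lower-tail computation and the Taylor estimate $(1-s)\log(1-s)+s\ge s^{2}/2$ are correct and do give \eqref{upperbnd} exactly.
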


Applied to the matrix $X^T$ the above theorem leads to the following estimate of the singular values of $X^T$.

\begin{lemma}
\label{speclem}
For any $s\in (0, 1)$ we have that
\begin{equation}
\label{lowerbnd2}
\sigma_k(X^T) \geq \sqrt{m_{\mathcal X} \alpha (1-s)}
\end{equation}
with probability $1 - k e^{\frac{- m_{\mathcal X}\alpha s^2  }{2 k C_2^2}}$.
\end{lemma}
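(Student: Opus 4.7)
The plan is to reduce the problem to a spectral estimate for a random sum of $k\times k$ positive-semidefinite matrices, and then invoke the Matrix Chernoff inequality (Theorem \ref{chernmat}). Writing $X=A^T\mathcal G^T$ with $\mathcal G^T=[\nabla g(A\xi_1),\ldots,\nabla g(A\xi_{m_{\mathcal X}})]\in M_{k\times m_{\mathcal X}}$, we have
$$XX^T \;=\; A^T\,\mathcal G^T\mathcal G\,A \;=\; A^T M A,\qquad M:=\sum_{j=1}^{m_{\mathcal X}}\nabla g(A\xi_j)\nabla g(A\xi_j)^T.$$
Since $AA^T=I_k$, the nonzero eigenvalues of $A^TMA$ coincide with the nonzero eigenvalues of $M\,AA^T=M$, so the $k$ nonzero singular values of $X$ are exactly the square roots of the eigenvalues of the $k\times k$ matrix $M$. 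In particular
$$\sigma_k(X^T)^2=\sigma_k(X)^2=\sigma_k(M),$$
and the lemma reduces to proving $\sigma_k(M)\ge m_{\mathcal X}\alpha(1-s)$ with the asserted probability.

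Next I would apply Theorem \ref{chernmat} to the i.i.d.\ random positive-semidefinite matrices $X_j:=\nabla g(A\xi_j)\nabla g(A\xi_j)^T$. For the almost-sure bound, $\sigma_1(X_j)=\|\nabla g(A\xi_j)\|_{\ell_2^k}^2\le k\|\nabla g\|_\infty^2\le kC_2^2$ by \eqref{cond2}, so one may take $C=kC_2^2$. For the expectation, rotational invariance gives
$$\mathbb E X_j \;=\; \int_{\mathbb S^{d-1}} \nabla g(Ax)\nabla g(Ax)^T\,d\mu_{\mathbb S^{d-1}}(x) \;=\; H_g,$$
and because $A$ is row-orthonormal the nonzero singular values of $H^f=A^T H_g A$ coincide with those of $H_g$; hence by \eqref{cond3}
$$\mu_{\min}=\sigma_k\Bigl(\sum_{j=1}^{m_{\mathcal X}}\mathbb E X_j\Bigr)=m_{\mathcal X}\,\sigma_k(H_g)\ge m_{\mathcal X}\alpha.$$

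Finally, the lower-tail estimate \eqref{upperbnd} of Theorem \ref{chernmat}, for $s\in(0,1)$, yields
$$\sigma_k(M)\;\ge\;(1-s)\mu_{\min}\;\ge\;(1-s)m_{\mathcal X}\alpha$$
with probability at least $1-k\exp(-\mu_{\min}s^2/(2C))\ge 1-k\exp(-m_{\mathcal X}\alpha s^2/(2kC_2^2))$ (the second inequality uses the monotonicity of the bound in $\mu_{\min}$, combined with the lower bound $\mu_{\min}\ge m_{\mathcal X}\alpha$). Taking square roots of $\sigma_k(X^T)^2=\sigma_k(M)$ gives \eqref{lowerbnd2}. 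The only nontrivial step is the reduction $\sigma_k(X^T)^2=\sigma_k(M)$, which is immediate from $AA^T=I_k$; after that, the Matrix Chernoff bound is plug-and-play, so I do not expect any real obstacle.
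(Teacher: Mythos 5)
Your proposal is correct and follows essentially the same route as the paper: reduce $\sigma_k(X^T)$ to the smallest eigenvalue of $\mathcal G^T\mathcal G=\sum_j\nabla g(A\xi_j)\nabla g(A\xi_j)^T$ using the row-orthonormality of $A$, bound $\sigma_1$ of each summand by $kC_2^2$, identify $\mathbb E X_j=H_g$ with $\sigma_k(H_g)\ge\alpha$, and apply the lower-tail Matrix Chernoff bound. The only cosmetic difference is that you pass through $XX^T$ and the identity $\lambda(PQ)\setminus\{0\}=\lambda(QP)\setminus\{0\}$, while the paper reads the same reduction off the factorization $X^T=\mathcal G A$ and its SVD; the two are equivalent.
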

\begin{proof} The proof is based on an application of  Theorem \ref{chernmat}. 
First of all note that
$$
X^T = \mathcal G A = U_{\mathcal G} \Sigma_{\mathcal G} [V_{\mathcal G}^T A],
$$
hence $\Sigma_{X^T} = \Sigma_{\mathcal G}$. Moreover
$$
\sigma_i(\mathcal G) = \sqrt{ \sigma_i(\mathcal G^T \mathcal G)}, \quad \mbox{ for all } i=1,\dots,k.
$$
Thus, to get information about the singular values of $X^T$ it is sufficient to study that of
$$
\mathcal G^T \mathcal G = \sum_{j=1}^{ m_{\mathcal X}}\nabla g(A \xi_j) \nabla g(A \xi_j)^T.
$$
We further notice that
$$
\sigma_1(\nabla g(A \xi_j) \nabla g(A \xi_j)^T) \leq \left ( \sum_{\ell,\ell'=1}^k |\nabla g(A \xi_j)_\ell \nabla g(A \xi_j)_{\ell'}|^2 \right )^{1/2} \leq k C^2_2:=C.
$$
Hence $X_j = \nabla g(A \xi_j) \nabla g(A \xi_j)^T$ is a random positive-semidefinite matrix, that is almost surely bounded. Moreover
$$
\mathbb E X_j = \mathbb E_\xi  \nabla g(A \xi_j) \nabla g(A \xi_j)^T = \int_{\mathbb S^{d-1}} \nabla g(A x) \nabla g(A x)^T d\mu_{\mathbb S^{d-1}}(x) = H_g.
$$
Hence, remembering that the singular values of $H_g$ are equivalent to that of $H^f$, by condition \eqref{cond3} we have $\mu_{\max} = m_{\mathcal X} \sigma_1(H_g) \leq m_{\mathcal X} k C^2_2$ and $\mu_{\min} = m_{\mathcal X}  \sigma_k(H_g) \geq m_{\mathcal X} \alpha>0$.
In particular
$$
m_{\mathcal X} k^2 C_2 \geq \mu_{\max} \geq \mu_{\min} \geq  m_{\mathcal X} \alpha >0.
$$
By an application of Theorem \ref{chernmat} we conclude that
$$
\sigma_k(X^T) = \sigma_k(\mathcal G) = \sqrt{\sigma_k \left (\sum_{j=1}^{ m_{\mathcal X}}\nabla g(A \xi_j) \nabla g(A \xi_j)^T \right)}\geq \sqrt{\mu_{\min}(1-s)} \geq \sqrt{m_{\mathcal X} \alpha (1-s)},
$$
with probability
$$
1 - k e^{-\frac{\mu_{\min}s^2  }{2 k C^2_2}} \geq 1 - k e^{\frac{- m_{\mathcal X}\alpha s^2  }{2 k C_2^2}}, 
$$
for all $s \in (0,1)$. 
\end{proof}

\noindent Finally we have collected all the results necessary to prove Theorem~\ref{thm:klarger1}.\\

\noindent{\bf Proof of Theorem~\ref{thm:klarger1}:}
\begin{proof} 
Combining Corollary~\ref{cor2}, Theorem~\ref{wedin}, and Lemma~\ref{speclem} shows that with probability at least
$$
1-\left (e^{-c'_1 m_\Phi}+e^{-\sqrt{m_\Phi d}}+  k e^{\frac{- m_{\mathcal X}\alpha s^2  }{2 k C_2^2}} \right ),
$$
for the first $k$ right singular vectors of $\hat X$ and $X$ we have
\begin{equation}
\| V_1 V_1^T - \hat V_1 \hat V_1^T\|_F \leq \frac{2\nu_2}{\sqrt{\alpha (1-s)}-\nu_2}. \notag
\end{equation}
Recalling from the proof of Lemma~\ref{speclem} that the (first $k$) right singular vectors $V_1^T$ of 
$X^T$ have the form $V_1^T = V_{\mathcal G}^TA$ then shows that $\hat A$ 
as defined in Algorithm~2 satisfies
\begin{equation} 
\| A^T A - \hat A^T \hat A\|_F = \| A^T V_{\mathcal G} V_{\mathcal G}^TA -\hat V_1 \hat V_1^T\|_F=\| V_1 V_1^T - \hat V_1 \hat V_1^T\|_F   \leq \frac{2 \nu_2}{\sqrt{\alpha (1-s)} -\nu_2}, \notag
\end{equation}
Using this estimate we can prove that $\hat f$ as defined in Algorithm 2 is a good approximation to $f$.
Since $A$ is row-orthogonal we have $A=A A^T A$ and therefore
\begin{align*}
|f(x)-\hat f(x)|&= | g (A x) - \hat g (\hat A x) | \\
&=  | g (A x) - g (A \hat A^T\hat A x) |\\
&\leq C_2 \sqrt{k}\|A x - A \hat A^T\hat A x\|_{\ell_2^k}\\
&=C_2 \sqrt{k}\|A (A^T A - \hat A^T\hat A) x\|_{\ell_2^k}\\
&\leq C_2 \sqrt{k}\|(A^T A - \hat A^T\hat A)\|_F \|x\|_{\ell_2^d}\\
&\leq 2C_2\sqrt{k}(1+\bar \epsilon)\frac{\nu_2}{\sqrt{\alpha (1-s)} - \nu_2}.
\end{align*}
\end{proof}

\begin{remark}
\noindent (i) Note that Theorem~\ref{thm:klarger1} is again an a priori estimate of the success probability and approximation error of Algorithm 2.
Once Algorithm 2 has been run we have the following a posteriori estimate. 
With probability at least $1-(e^{-c'_1 m_\Phi}+e^{-\sqrt{m_\Phi d}})$ we have that 
$$
\| f - \hat f \|_\infty  \leq 2C_2\sqrt{k m_{\mathcal X}}(1+\bar \epsilon) \frac{\nu_2} { \sigma_k(\hat X^T)}.
$$\\
\noindent (ii) We further observe that Theorem~\ref{thm:klarger1} does not straightforwardly reduce to Theorem~\ref{thm:kis1}
for $k=1$, because in the one-dimensional case we used the simpler maximum strategy as in \eqref{maxchoice} instead of the
singular value decomposition \eqref{SVD}.
\end{remark}


\subsection{Discussion on tractability}
Recall, that the push-forward measure $\mu_k = \frac{\Gamma(d/2)}{\pi^{k/2} \Gamma((d-k)/2)}
(1- \|y\|_{\ell_2^k}^2)^{\frac{d-2-k}{2}} \mathcal L^k$ of $\mu_{\mathbb S^{d-1}}$ on the unit ball $B_{\mathbb R^k}$
was determined in Theorem \ref{thmpush} as the measure, for which
\begin{eqnarray*}
H_g  &=& \int_{\mathbb S^{d-1}} \nabla g(A  x)  \nabla g(A  x)^T d \mu_{\mathbb S^{d-1}}(x)\\
&=&\frac{\Gamma(d/2)}{\pi^{k/2} \Gamma((d-k)/2)} \int_{B_{\mathbb R^k}}  \nabla g(y)  \nabla g(y)^T  
(1- \|y\|_{\ell_2^k}^2)^{\frac{d-2-k}{2}} d y.
\end{eqnarray*}
As an instructive example, let us apply this formula to the case when $g$ is a radial function, i.e.,
$$
g(y) = g_0(\|y\|_{\ell_2^k}),
$$
for a function $g_0:[0,1] \to \mathbb R$ sufficiently smooth, and $g_0'(0) =0$. 

A direct calculation shows, that $\nabla g(y)=\frac{g_0'(r)}{r}\cdot y$, where $r=\|y\|_{\ell_2^k}$,
and 
$$
\nabla g(y)  \nabla g(y)^T  = \frac{g'_0(r)^2}{r^2}y y^T.
$$

Hence,
$$
(H_g)_{ij}=
\frac{\Gamma(d/2)}{\pi^{k/2}\Gamma((d-k)/2)}\int_{B_{\mathbb R^k}}
\frac{g'_0(\|y\|_{\ell_2^k})^2}{\|y\|_{\ell_2^k}^2}y_iy_j(1-\|y\|_{\ell_2^k}^2)^{\frac{d-2-k}{2}}dy.
$$
If $i\not=j$, the integral vanishes due to the symmetry of $B_{\mathbb R^k}$. If $i=j$, we get again by symmetry
\begin{eqnarray*}
(H_g)_{ii}&=&
\frac{\Gamma(d/2)}{\pi^{k/2}\Gamma((d-k)/2)}\int_{B_{\mathbb R^k}}\frac{g'_0(\|y\|_{\ell_2^k})^2}{\|y\|_{\ell_2^k}^2}y^2_i(1-\|y\|_{\ell_2^k}^2)^{\frac{d-2-k}{2}}dy\\
&=& \frac{\Gamma(d/2)}{k\pi^{k/2}\Gamma((d-k)/2)}\int_{B_{\mathbb R^k}}g'_0(\|y\|_{\ell_2^k})^2(1-\|y\|_{\ell_2^k}^2)^{\frac{d-2-k}{2}}dy\\
&=& \frac{2\Gamma(d/2)}{k\Gamma((d-k)/2)\Gamma(k/2)}\int_{0}^1g'_0(r)^2(1-r^2)^{\frac{d-2-k}{2}}r^{k-1}dr=:\alpha(k,d).
\end{eqnarray*}
Hence, $H_g=\alpha(k,d) I_k$.
Similarly to Proposition \ref{alphad}, we can expand $g_0'$ into a Taylor series
$$
g_0'(r) = \sum_{\ell=2}^{N-1} \frac{g_0^{(\ell)}(0)}{(\ell-1)!} r^{\ell-1} + \mathcal O(r^N).
$$
If we assume that $g_0^{(\ell)}(0) =0$, for all $\ell=1,\dots,M$, but $g_0^{(M+1)}(0)\neq 0$, then we obtain
$$
g_0'(r)^2 = \left ( \frac{g_0^{(M+1)}(0)}{M!} \right )^2 r^{2M} + \mathcal O(r^{2M +1}),
$$
and, by Stirling's approximation,
\begin{eqnarray*}
\alpha(k,d) &=&  \mathcal O \left ( \frac{\Gamma(d/2)}{\Gamma((d-k)/2)}  \int_0^1 r^{2M+k-1} (1- r^2)^{\frac{d-k-2}{2}} d r \right )\\
&=&  \mathcal O \left ( \frac{\Gamma(d/2)}{\Gamma(d/2+M)} \right )\\
&=& \mathcal O \left ( d^{-M} \right ), \quad d \to \infty.
\end{eqnarray*}
From these computations, we deduce that learning functions $f(x) = g(A x)$, where $g$ is radial (or nearly radial), 
using our method has usually polynomial complexity with respect to the dimension $d$.

\section{Extensions and Generalizations}

We assumed throughout the paper that the function $f$ is defined on the unit ball $B_{\mathbb R^d}$
of $\mathbb R^d$. To be able to approximate the derivatives of $f$ even on the boundary of $B_{\mathbb R^d}$,
we actually supposed, that $f$ is defined also on an $\bar\epsilon$ neighborhood of the unit ball.
Furthermore, we assumed that the function values may be measured exactly without any error.
The main aim of this section is to discuss the possibilities and limitations of our method. Firstly, we discuss
the numerical stability of our approach with respect to noise. Secondly, we deal
with functions defined on a convex body $\Omega\subset \mathbb R^d.$
As it is our intention here only to sketch, still rigorously, further interesting research directions, we limit our discussion to 
the case of $k=1$.

\subsection{Stability under noisy measurements}\label{sec:noise}

Let us assume that the function evaluation in \eqref{taylor} can be performed only with certain precision. We again collect
the $m_{\mathcal X}\times m_{\Phi}$ instances of \eqref{taylor} as
\begin{equation}\label{eq:noise1}
\Phi X=Y-{\mathcal E}+\frac{\mathcal W}{\epsilon},
\end{equation}
where the $(i,j)$ entry of ${\mathcal W}$ (denoted by $w_{ij}$) is the difference between the exact value
of $f(\xi_j+\epsilon\varphi_i)-f(\xi_j)$ and its value measured with noise. This leads to a compressed sensing setting
\begin{equation}\label{eq:noise2}
Y=\Phi X+{\mathcal E}-\frac{{\mathcal W}}{\epsilon}.
\end{equation}
Applying Theorem \ref{thm:cs} we obtain a substitute for Corollary \ref{cor1} with ${\mathcal E}$ replaced by
${\mathcal E}-{\mathcal W}/\epsilon.$ Therefore we would like to estimate the norm of $w_j$ (the $j$-th column of ${\mathcal W}$)
in $\ell_2^{m_\Phi}$ and $\ell_\infty^{m_\Phi}$. If we merely assume that the noise is bounded (i.e. $|w_{ij}|\le \nu$), the
best possible estimate is $\|w_j\|_{\ell_2^{m_\Phi}}\le \nu\sqrt{m_\Phi}$. We observe that the more sampling points we take
the greater is the level of noise. This effect of amplification of the noise is actually known under the name of \emph{noise folding} \cite{cael11} and, unfortunately,
corrupts the estimate \eqref{approx2-1}, see also \cite[Section 4]{codadekepi10} for a discussion in a related context.

Let us therefore sketch a different approach. We make the rather natural assumption that $w_{ij}$ is a random noise.

The analogue of Theorem \ref{thm:cs} for the recovery of $x$ from noisy measurements $y=\Phi x+\omega$, 
where $\omega=(\omega_1,\dots,\omega_m)$ are independent identically distributed (i.i.d.) Gaussian variables with mean zero and variance $\sigma^2$,
was given in the work of Cand\`es and Tao \cite{cata07}.
They proposed a certain $\ell_1$-regularization problem, whose solution (called the \emph{Dantzig selector})
satisfies
$$
\|x-\hat x\|^2_{\ell_2^d}\le C^2\cdot 2\log d\cdot \left(\sigma^2+\sum_{i=1}^d\min(x_i^2,\sigma^2)\right).
$$
Especially, if $x$ is a $k$-sparse vector, then 
$\|x-\hat x\|_{\ell_2^d}\le C\cdot \sqrt{2\log d}\cdot \sqrt{k+1}\cdot\sigma$.
We observe that this estimate scales very favorably with $d$ (only as $\sqrt{\log d}$) and, moreover,
does depend only on the sparsity of $x$, and not anymore on the number of measurements $m_\Phi$. Therefore, there is no noise folding in this case.

The equation \eqref{eq:noise2} requires a combination of Theorem \ref{thm:cs} and the result of Cand\`es and Tao.
Namely, we would like to reconstruct $x$ if $y=\Phi x+\varepsilon+\omega$ is given, where $\varepsilon$ is a deterministic error
and $\omega$ is a vector of i.i.d. Gaussian variables. Obviously, the detailed analysis of this issue goes beyond the scope of this paper.
Nevertheless, let us present some numerical evidence of the numerical stability of our approach in the presence of random noise.

We consider the function
\begin{equation}\label{eq:funct:noise}
f(x)=\max\left(\left[1-5\sqrt{(x_3-1/2)^2+(x_4-1/2)^2}\right]^3,0\right),\quad x\in\R^{1000}
\end{equation}
in dimension $d=1000$. We use a variant of Algorithm 1 based on $\ell_1$ minimization
to identify the active coordinates of $f$, cf. \cite{scvy11} for details. We suppose that function evaluations were distorted by
Gaussian error $\nu \omega$ with $\omega\approx {\mathcal N}(0,1)$ and $\nu\in\{0.1, 0.01, 0.001\}$.
We chose $\epsilon=0.1$ in the approximation \eqref{taylor}.
For each number of points $m_{\mathcal X}\in\{6 \ell, \ell=1,\dots,10\}$ ($x$-axis) and each number 
of directions $m_\Phi\in\{20 \ell,\ell=1,\dots,10\}$ ($y$-axis) we produced one hundred trials.
The success rates of recovery go from white color (no success) to black (100 successful  recoveries).\\
\hskip-.5cm{\includegraphics[width=5.3cm]{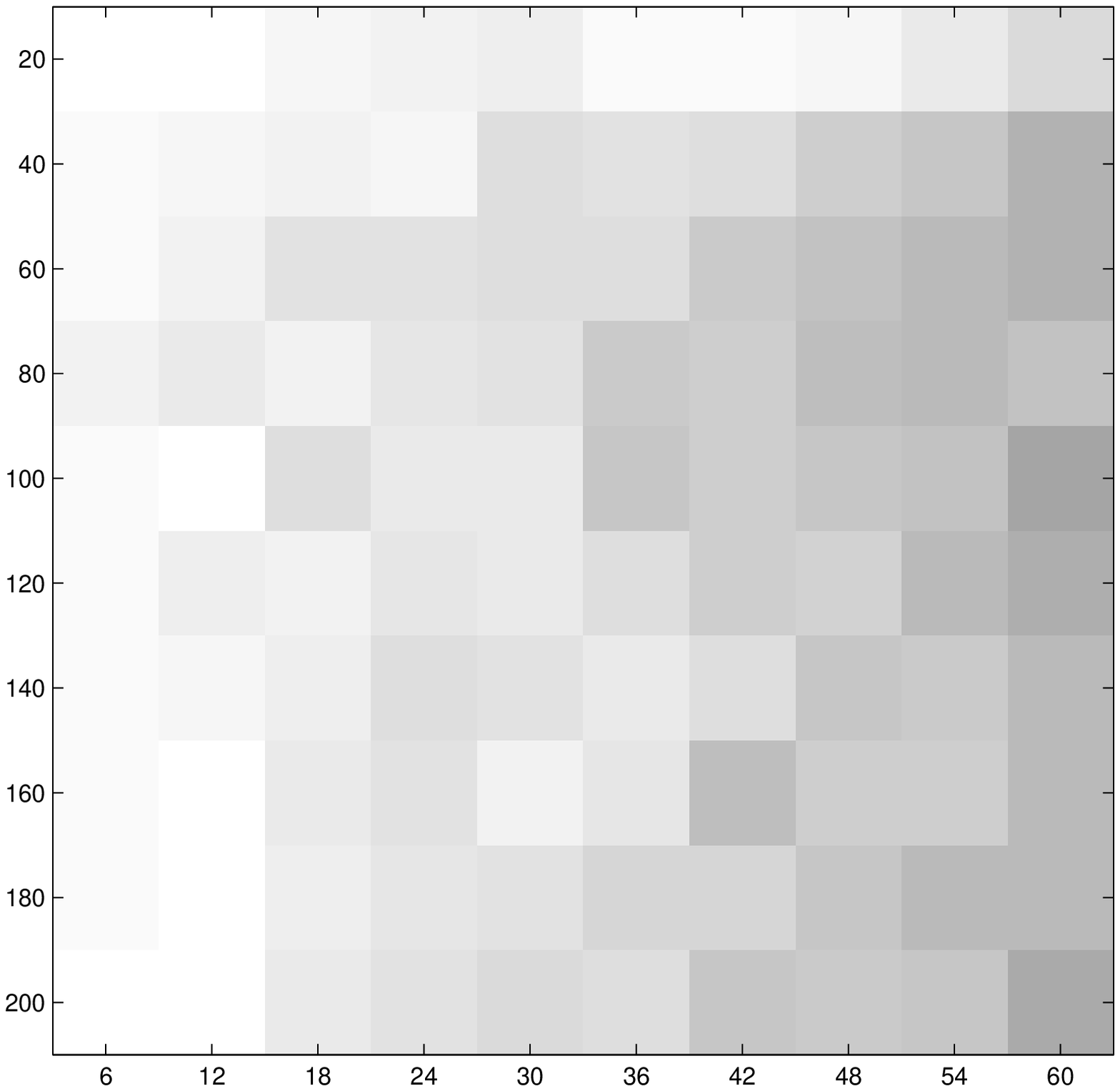}}
\hskip-.5cm{\includegraphics[width=5.3cm]{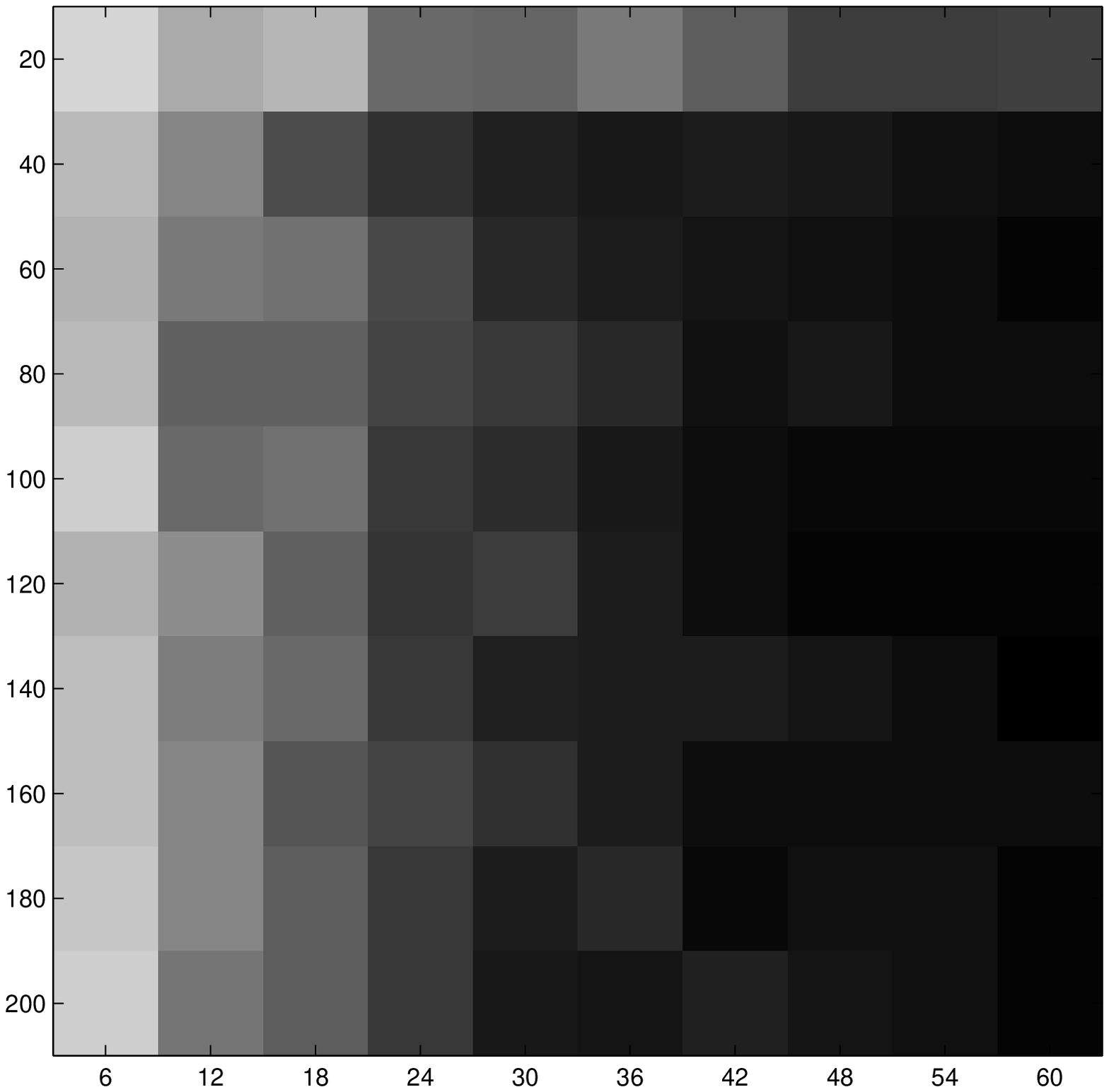}}
\hskip-.5cm{\includegraphics[width=5.8cm]{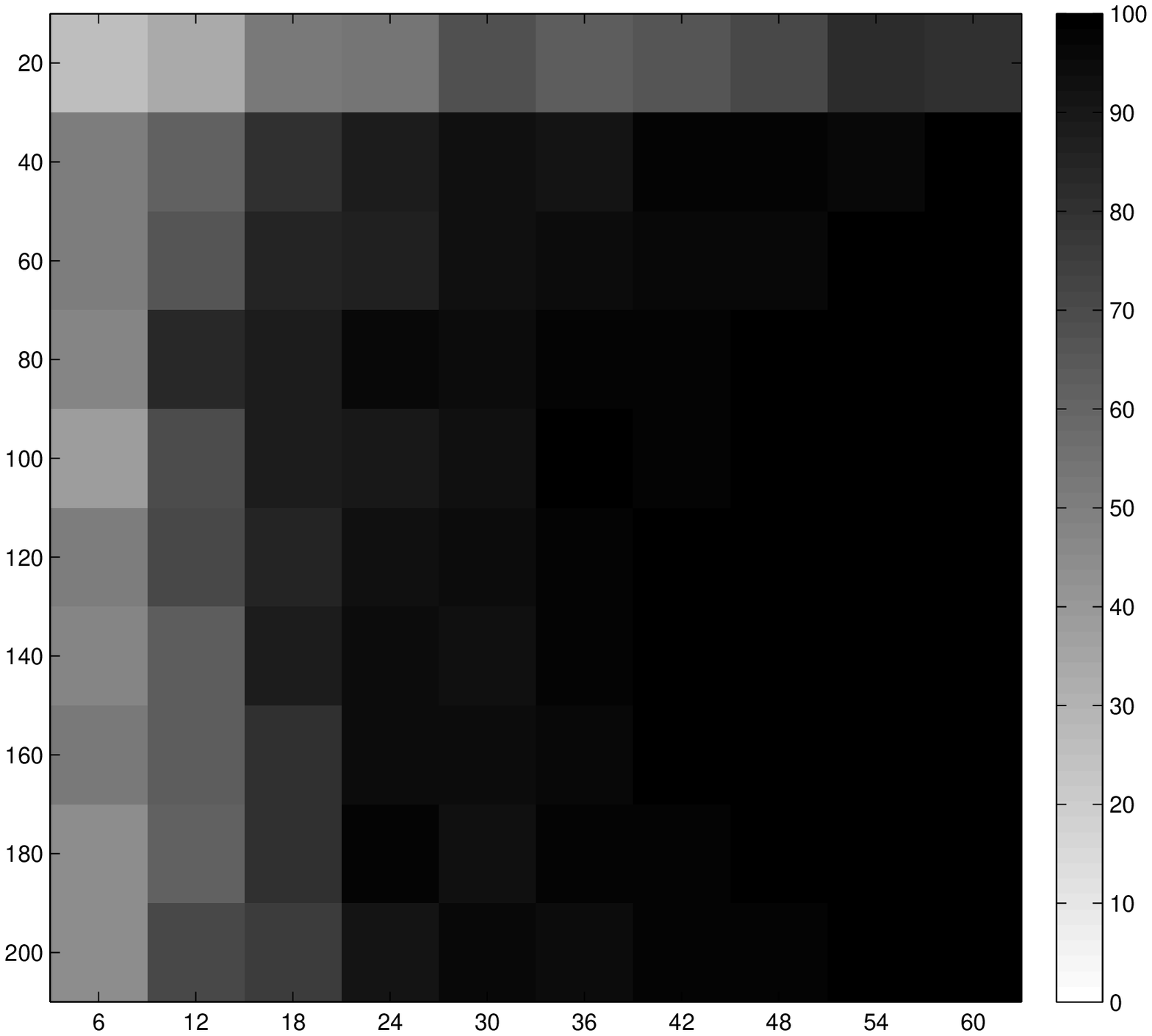}}
Figure 2: Recovery of active coordinates of $f(x)$ given by \eqref{eq:funct:noise} with
$\nu=0.1$, $\nu=0.01$ and $\nu=0.001$, from left to right respectively.
Let us mention that the success rates of recovery for noise-free setting are hardly distinguishable from the last picture
above ($\nu=0.001$).
\\

We conclude from Figure 2 that there is a smooth increase of the rate of successful recovery with decreasing noise power and a fully
stable recovery behavior. 
\subsection{Convex bodies}

A careful inspection of our method shows, that it may be generalized to arbitrary convex bodies. Let us describe the necessary modifications and give an overview of the results for the case $k=1$. 
First of all, one has to replace \eqref{eq:def:hf} by 

\begin{equation}\label{eq:def:hf2}
H^f := \int_{\Omega} \nabla f(x) \nabla f(x)^T d\mu_{\Omega}(x).
\end{equation}
Here, $\mu_\Omega$ is a probability measure on $\Omega$ and the points in $\mathcal X$ (cf. \eqref{Xpoints})
are selected at random with respect to $\mu_{\Omega}$. For $\Omega=B_{\mathbb R^d}$, we simply selected $\mu_{\Omega}=\mu_{\mathbb S^{d-1}}$ 
to be the normalized surface measure on $\mathbb S^{d-1}$. This corresponded to the fact, 
that $a\in \mathbb S^{d-1}$ was arbitrary and therefore a-priori no direction was preferred. To be able to evaluate the derivatives of 
$f$ even on the boundary of $\Omega$, we suppose, that $f$ is actually defined on an $\bar\epsilon$ neighborhood of $\Omega$, namely 
on the set $\Omega+\bar\epsilon:=\{x\in\mathbb R^d: {\rm dist}(\Omega,x)\le \bar\epsilon\}$. The function $g$ is supposed to be 
defined on the image of $\Omega+\bar\epsilon$ under the mapping $x\to a\cdot x$, i.e., on an interval. We assume again \eqref{cond2}.\\
Surprisingly enough, these are all the modifications necessary to proceed with the identification of $\hat a$ and
\eqref{approx2'} holds true under these circumstances.\\

The proof of Theorem \ref{thm:kis1} was based on the fact, that for every $y\in B_{\mathbb R}$, we can easily find an element
$x_y\in B_{\mathbb R^d}$, such that $\hat a\cdot x_y=y.$ It is enough to consider $x_y=\hat a^T y.$
In the case of a general convex set $\Omega$, we first need to define for any $\hat a \in \mathbb S^{d-1}$ fixed, a
function $x_\cdot:\hat a(\Omega+\bar \epsilon) \to \Omega+\bar \epsilon$ given by $y \mapsto x_y$, and such that
$$
\hat a \cdot x_y = y.
$$
In particular, for all $y \in \hat a(\Omega+\bar \epsilon)$ we need to find
$$
x_ y \in \Omega+\bar \epsilon \cap \{x \in \mathbb R^d: \hat a \cdot x = y \}.
$$
Since both $\Omega+\bar \epsilon$ and the solution space $\{x \in \mathbb R^d: \hat a \cdot x = y \}$ 
are closed convex sets in $\mathbb R^d$, one could use an alternating projection algorithm for finding 
$x_y$ \cite{babo96}. Thus, we can assume that, at least algorithmically, this map can be computed.
Moreover, and alternatively, since the operation described above, i.e., finding $x_y\in B_{\mathbb R^d}$, such that $\hat a\cdot x_y=y$,
has to be executed as many times as we need to define, e.g., an appropriate spline approximation of $\hat g$, we may proceed as follows:
we find first $x_{\max}, x_{\min} \in B_{\mathbb R^d}$, such that $\hat a\cdot x_{\max}=\max_{x \in B_{\mathbb R^d}} \hat a\cdot x$ and
$\hat a\cdot x_{\min}=\min_{x \in B_{\mathbb R^d}} \hat a\cdot x$. 
Then any other $x_y$ such that $y =\hat a \cdot x_y$ is computed very fast by $x_y = \lambda_y x_{\min}+ (1- \lambda_y) x_{\max}$
for some $\lambda_y \in [0,1]$.

With this modification, also Theorem \ref{thm:kis1} holds true, with the definition of $\hat g$ given in Algorithm 1 replaced now by
\begin{equation*}
\hat g(y) := f(x_y), \quad y \in  \hat a(\Omega+\bar \epsilon)
\end{equation*}
and \eqref{error:kis1} replaced by
\begin{equation*}
\|f-\hat f \|_\infty \leq 2 C_2 (\diam(\Omega)+2\bar\epsilon) \frac{\nu_1}{ \sqrt{\alpha(1-s) }-\nu_1}.
\end{equation*}
Unfortunately, and this seems to be the main drawback of this approach, the diameter of $\Omega$, $\diam(\Omega)= \max_{x,x' \in \Omega} \|x-x'\|_{\ell_2^d}$ may grow with $d$.
This is especially the case, when $\Omega=[-1,1]^d$, which gives $\diam(\Omega)=\sqrt{2d}.$ 

\subsection{An approach through Minkowski functional}
To get better results for specific convex bodies (i.e. $\Omega=[-1,1]^d$), we propose another approach. We stress very clearly that up to now this is only to be understood as an open direction, which is a subject of further research.\\
We assume, that $\Omega$ is a closed convex set, which is \emph{absorbing} and \emph{balanced}, i.e.
\begin{itemize}
\item for every $x\in\mathbb R^d$, there is a $t=t(x)>0$, such that $tx\in\Omega$,
\item $\alpha\Omega:=\{\alpha x: x\in\Omega\}\subset \Omega$ for every $\alpha\in [-1,1]$.
\end{itemize}
Then we can define its Minkowski functional as
$$
p_{\Omega}(x):=\inf\{r>0:x/r\in\Omega\},\quad x\in\mathbb R^d.
$$
It is well known, that this expression is actually a norm and $\Omega$ is its unit ball. Hence
\begin{equation}\label{eq:diam}
\sup_{x,x'\in\Omega}p_{\Omega}(x-x')\le 2.
\end{equation}
This allows us to replace the inequality
$$
|(a-\hat a)\cdot (x_y-x)|\le \|a-\hat a\|_2\cdot \|x_y-x\|_2
$$
by
$$
|(a-\hat a)\cdot (x_y-x)|\le \|a-\hat a\|'_{\Omega}\cdot \|x_y-x\|_\Omega.
$$
Here, $\|\cdot\|_\Omega=p_\Omega(\cdot)$ and $\|\cdot\|'_{\Omega}$ is its dual norm. According to \eqref{eq:diam},
this solves the problem of the factor $\diam (\Omega)$ - the diameter of $\Omega$ with respect to $\|\cdot\|_{\Omega}$ is always bounded by 2. Unfortunately the problem is transferred to the second factor, namely $\|a-\hat a\|'_{\Omega}$.
For this, one would need the analogue of Theorem \ref{thm:cs} with the $\ell_2^d$-norm in \eqref{woj1} replaced 
by $\|\cdot\|'_{\Omega}.$ While any treatment of this general case is clearly beyond the scope of this paper and 
remains a subject of further investigation, we can shortly sketch what happens in the special case $\Omega=[-1,1]^d$. 
Then we simply have $\|\cdot\|_\Omega=\|\cdot\|_{{\ell_\infty^d}}$ and $\|\cdot\|'_\Omega=\|\cdot\|_{{\ell_1^d}}$. 
To estimate $\|a-\hat a\|_{\ell_1^d}$ we would have to combine Lemma~3.1 in \cite{codadekepi10} with \eqref{approx2'} 
and would get again a result that does not depend on the dimension $d$.

\subsubsection*{Acknowledgments}
Massimo Fornasier would like to thank Ronald A. DeVore for his kind and warm hospitality at Texas A$\&$M University and the 
very exciting daily joint discussions which later inspired part of this work.
We acknowledge the financial support provided by the START-award ``Sparse Approximation and Optimization in High Dimensions'' 
of the Fonds zur F\"orderung der wissenschaftlichen Forschung (FWF, Austrian Science Foundation).
We would like to thank the anonymous referees for their very valuable comments and remarks.

\bibliography{FS}

\end{document}